		\definecolor{navy}{HTML}{000080}
		\numberwithin{figure}{section}
		\let\oldtocsection=\tocsection
		\renewcommand{\tocsection}[2]{\bf\oldtocsection{#1}{#2}} 
		\let\oldtocsubsection=\tocsubsection
		\renewcommand{\tocsubsection}[2]{\hspace{1em}\oldtocsubsection{#1}{#2}} 
			\newcommand{\ignore}[1]{}
			\newcommand{\R}{\mathbb{R}}
			\newcommand{\Z}{\mathbb{Z}}
			\newcommand{\N}{\mathbb{N}}
				\newcommand{\defeq}{\colonequals}
			\newcommand{\surj}{\twoheadrightarrow}
			\newcommand{\sing}[1]{#1_\mathrm{s}}
			\newcommand{\take}{\setminus}
			\newcommand{\setb}[2]{\left\{#1~\middle|~#2\right\}}
			\newcommand{\mathnth}[1]{#1^\mathrm{th}}
			\newcommand{\ucov}{p}
			\newcommand{\reg}[1]{#1\take\sing{#1}}
			\newcommand{\link}{\Sigma}
			\renewcommand{\epsilon}{\varepsilon}
			\renewcommand{\phi}{\varphi}
			\renewcommand{\theta}{\vartheta}
			\renewcommand{\S}{\mathbb{S}}
			\renewcommand{\rm}{\mathrm}
				\renewcommand{\emptyset}{\varnothing}
			\renewcommand{\angle}{\measuredangle}
			\DeclareMathOperator{\SO}{SO}
			\DeclareMathOperator{\GL}{GL}
			\DeclareMathOperator{\Orth}{O}
			\DeclareMathOperator{\mult}{mult}
			\DeclareMathOperator{\diam}{diam}
			\DeclareMathOperator{\Mon}{Mon}
			\DeclareMathOperator{\Vol}{Vol}
			\DeclareMathOperator{\dev}{Dev}
			\DeclareMathOperator{\pack}{pack}
			\DeclareMathAlphabet{\oldcal}{OMS}{zplm}{m}{n} 
		  \xpatchcmd{\@othm}{\MakeLowercase}{\ExpandableMakeLowercase}{}{}
		  \xpatchcmd{\@ynthm}{\MakeLowercase}{\ExpandableMakeLowercase}{}{}
		\newtheorem{thm}{Theorem}[section]
		  \crefname{thm}{Theorem}{Theorems} 
		\newtheorem{conj}[thm]{Conjecture}
			\crefname{conj}{Conjecture}{Conjectures}
		\newtheorem{prop}[thm]{Proposition}
		  \crefname{prop}{Proposition}{Propositions}
		\newtheorem{lem}[thm]{Lemma}
		  \crefname{lem}{Lemma}{Lemmas}
		\newtheorem{cor}[thm]{Corollary}
		  \crefname{cor}{Corollary}{Corollaries}
		\theoremstyle{definition}
		\newtheorem{defn}[thm]{Definition}
		  \crefname{defn}{Definition}{Definitions}
		\newtheorem{rem}[thm]{Remark}
		  \crefname{rem}{Remark}{Remarks}
		  \crefname{exmp}{Example}{Examples}
		\newtheorem*{exmp*}{Example}
		  \crefname{exmp*}{Example}{Examples}
		\newtheorem{proc}[thm]{Procedure}
		  \crefname{proc}{Procedure}{Procedures}
		\newtheorem*{nota*}{Notation}
		\theoremstyle{remark}
		\numberwithin{equation}{section}
		\crefname{equation}{Formula}{Formulae}
\begin{document}

\title[Singular Vertices]{Singular vertices of nonnegatively curved integral polyhedral 3-manifolds}
\author{Thomas Sharpe}
\date{10th February 2023}
\address{Thomas Sharpe, Department of Mathematics, King's College London, Strand, London, WC2R 2LS}
\email{tom.sharpe@kcl.ac.uk}

\begin{abstract}
In this paper, we study polyhedral 3-manifolds with nonnegative curvature and integral monodromy, two conditions motivated by Thurston's work in \cite{thurston1998shapes}. We classify the 32 isometry types of codimension 3 singularities in such manifolds. We also show that the number of these singularities is bounded.
\end{abstract}
\vspace*{-25pt}
\maketitle

\section{Introduction} 
	\label{sec:intro}

	\noindent A \emph{polyhedral \(n\)-manifold} is an \(n\)-manifold with a geodesic metric induced by a Euclidean triangulation. We call a polyhedral manifold \emph{nonnegatively curved} if the sum of the dihedral angles (the \emph{conical angle}) around any codimension 2 simplex is at most \(2\pi\). In \cite{thurston1998shapes}, Thurston made extensive study of nonnegatively curved polyhedral metrics on \(S^2\). In three dimensions, they play an important role in the proof of Thurston's orbifold theorem, as demonstrated by Cooper, Hodgson, and Kerckhoff in \cite{cooper2000three}. More recently, they have been studied by Cooper and Porti in \cite{MR2484703}, and Lebedeva, Matveev, Petrunin, and Shevchishin in \cite{MR3361404}, among others.

	Thurston's work included the study of metrics induced by \emph{equilateral} triangulations. This motivates the notion of \emph{integrality}---essentially, the requirement that parallel transport outside of the singularities preserves a lattice. In two dimensions, singularities are just isolated conical points, while in three dimensions, they form a union of lines, circles, and graphs of minimum degree 3, whose vertices are called \emph{singular vertices}. From the Gauss--Bonnet formula for polyhedral surfaces (for which, see \cite[Thm.~3.15]{cooper2000three}), it follows that the number of conical points of any nonnegatively curved integral polyhedral surface is at most 12. In this paper, we prove the following 3-dimensional generalisation of this bound.

	\begin{restatable}[Singular vertex bound]{thm}{singVertBnd}\label{res:singVertBnd}
		There is a constant \(B_\rm{ver}\in\N\) such that \emph{any} (complete, connected) nonnegatively curved integral polyhedral 3-manifold has fewer than \(B_\rm{ver}\) singular vertices.
	\end{restatable}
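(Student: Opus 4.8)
The plan is to run a three-dimensional analogue of the proof of the surface bound $12$: there the total angle deficit equals $2\pi\chi\le 4\pi$ while integrality forces each deficit to be at least $\pi/3$. Here the analogue of a conical point is a singular vertex $v$, and the analogue of its deficit is the \emph{solid-angle defect} $\kappa(v):=4\pi-\mathrm{Area}(L_v)$ of its link $L_v$. I will bound $\kappa(v)$ below by a universal positive constant, bound $\sum_v\kappa(v)$ above by a universal constant, and divide.

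\emph{The lower bound} is the easy direction. The link $L_v$ is a closed spherical cone surface homeomorphic to $S^2$; the singular edges of $M$ through $v$ are exactly its cone points, there are at least three of them, and the cone angle at each equals the conical angle $\alpha_e$ of the corresponding edge $e$, which by integrality and nonnegative curvature lies in $\{\pi/3,\pi/2,2\pi/3,\pi,4\pi/3,3\pi/2,5\pi/3\}$ and hence is at most $5\pi/3$. Spherical Gauss--Bonnet, $\mathrm{Area}(L_v)+\sum_{e\ni v}(2\pi-\alpha_e)=4\pi$, then gives $\mathrm{Area}(L_v)\le 4\pi-3\cdot\tfrac{\pi}{3}=3\pi$, i.e.\ $\kappa(v)\ge\pi$; alternatively a positive lower bound can be read off the list of $32$ isometry types. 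Summing this identity over all singular vertices and regrouping by edges yields the bookkeeping identity $\sum_v\kappa(v)=\sum_e(2\pi-\alpha_e)\cdot\#\{\text{endpoints of }e\}\le 2\sum_e(2\pi-\alpha_e)$, so everything reduces to bounding $\sum_e(2\pi-\alpha_e)$ over the singular edges.

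\emph{The upper bound.} First I would reduce to compact $M$. By the soul theorem for complete spaces of nonnegative curvature, outside a compact core $M$ is modelled on a (possibly twisted) product of a closed surface with a ray; in such a product every point has a link which is a spherical suspension, hence has at most two cone points, so it cannot be a singular vertex. Hence every noncompact end meets the singular set only in rays and carries no singular vertices, and we may work with a compact region containing all of them. For compact $M$ the inequality $\sum_e(2\pi-\alpha_e)\le C_0$ is a Gauss--Bonnet-type statement about the flat cone $3$-manifold $M$, and the natural route is to pass to a truncation of the $4$-dimensional metric cone $C(M)$ and apply the Chern--Gauss--Bonnet theorem for piecewise-flat manifolds with conical strata: the bulk is flat and contributes nothing, each codimension-$2$ stratum $C(e)$ is contractible and so contributes $\tfrac{2\pi-\alpha_e}{2\pi}$, and one expects the apex, boundary and singular-vertex contributions to combine into a quantity bounded in terms of volumes and solid angles; rearranging then isolates and bounds $\sum_e(2\pi-\alpha_e)$, and we obtain $V<\tfrac{2}{\pi}C_0+1=:B_\mathrm{ver}$. (As a consistency check, since the singular graph has minimum degree $\ge 3$ at its vertices one has $V\le 2b_1$ of that graph, so bounding its first Betti number would also suffice.)

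The hard part will be this global upper bound. In two dimensions it is automatic because $\chi\le 2$; in three dimensions a closed manifold has $\chi=0$, so nonnegative curvature must enter essentially, and the difficulty sits in the four-dimensional Chern--Gauss--Bonnet bookkeeping --- in particular controlling the apex, boundary and singular-vertex terms, whose local integrands are genuinely curved rather than flat and which must conspire to cancel the unbounded edge-length contributions --- and in making the soul-theorem reduction compatible with the polyhedral structure so that no singular vertices are hiding in the ends. The per-vertex lower bound and the edge--vertex bookkeeping are, by comparison, routine.
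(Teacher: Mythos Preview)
Your lower bound $\kappa(v)\ge\pi$ and the edge--vertex bookkeeping identity are both correct. The gap is the upper bound, and it is structural rather than technical. You reduce the theorem to bounding $\sum_e(2\pi-\alpha_e)$ over the singular edges; but each summand lies in $[\pi/3,5\pi/3]$, so such a bound is equivalent to bounding the \emph{number} of singular edges---and that is the paper's Singular Edge Bound, stated there as an open conjecture strictly stronger than the theorem at hand. Even restricting to edges incident to a vertex, a bound on this sum is equivalent (via handshaking and $3\le\deg v\le 5$) to the vertex bound itself. Your Chern--Gauss--Bonnet sketch on a truncated $C(M)$ does not break out of this: whatever the precise form of the identity, it will carry one contribution per codimension-$3$ stratum (one per singular vertex, the unknown), together with apex and boundary terms depending on the global geometry of $M$, and you offer no mechanism to control any of these---``one expects the apex, boundary and singular-vertex contributions to combine into a quantity bounded\ldots'' is precisely the entire problem. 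Nonnegative curvature enters your argument only to make each $\kappa(v)$ positive; since $\chi(M)=0$ kills any direct odd-dimensional Gauss--Bonnet, the curvature hypothesis has to do real work, and here it does none.

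The paper's proof takes a completely different route, with no global integral identity. From the finite classification of links it extracts a fixed $\varepsilon_0<\pi$ such that every singular vertex is $\varepsilon_0$-\emph{narrow} (link diameter $\le\varepsilon_0$), and then proves a purely Alexandrov-geometric statement: in any $n$-dimensional space of curvature $\ge 0$, the number of $\varepsilon$-narrow points is at most some $B(n,\varepsilon)$. The mechanism is a Bishop--Gromov packing argument that, from any large finite set, extracts a long sequence with geometrically decaying consecutive distances; a packing bound in the space of directions at the final point then locates two earlier points subtending a small angle there; and Toponogov's theorem on the resulting thin comparison triangle forces the middle angle to exceed $\varepsilon$, contradicting narrowness at that vertex. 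Nonnegative curvature is used essentially in both the Bishop--Gromov and the Toponogov steps, and no compactness reduction, soul theorem, or control on singular circles and lines is required.
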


	The terms used here are defined more precisely in \cref{sec:conv}. This result follows from \cref{res:epsVertBndAlexandrov}---a more general result about \emph{Alexandrov spaces of nonnegative curvature}, which most of \cref{sec:proof} is dedicated to proving. Specifically, call a point \(x\) in an Alexandrov space \emph{\(\epsilon\)-narrow} if the angle between any two paths meeting at \(x\) is at most \(\epsilon\). \Cref{res:epsVertBndAlexandrov} states that there is a bound on the number of \(\varepsilon\)-narrow points in any nonnegatively curved Alexandrov space depending only on \(\varepsilon\) and the dimension of the space. After this paper was first written, the author learned that \cref{res:epsVertBndAlexandrov} actually follows from a recent and very general result of Li and Naber, \cite[Cor.~1.4]{MR4171913}. The proof of \cref{res:epsVertBndAlexandrov} presented in \cref{sec:proof} is independent of their work and, due to the more specialised setting of our problem, is shorter than the proof of their result.

	The reasons that \cref{res:singVertBnd} follows from \cref{res:epsVertBndAlexandrov} are that nonnegatively curved polyhedral 3-manifolds are 3-dimensional Alexandrov spaces of nonnegative curvature (by \cite[Ex.~2.9~(6)]{MR1185284}) and that there is a fixed \(\epsilon_0\in(0,\pi)\) such that every singular vertex in any nonnegatively curved integral polyhedral 3-manifold is \(\epsilon_0\)-narrow. The latter statement follows from the fact that there are only finitely many local isometry types of singular vertices in nonnegatively curved integral polyhedral 3-manifolds (see \cref{res:linkDiameterBoundedAwayFromPi}). In this paper, we show not only this, but fully classify these local isometry types.

	\begin{thm}[Classification of singular vertices]\label{res:classificationOfSingularVertsShort}
		There are 32 possibilities for the local isometry type of a singular vertex in a nonnegatively curved integral polyhedral 3-manifold.
	\end{thm}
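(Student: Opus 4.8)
The plan is to reduce the statement to a classification of the links of singular vertices and then to a finite combinatorial enumeration. A neighbourhood of a singular vertex $v$ is the metric cone $C(\link)$ over its link $\link = \link_v$, a closed spherical cone surface homeomorphic to $\S^2$; the metric germ at $v$ and the isometry type of $\link$ determine each other, so it suffices to classify the links. The cone points of $\link$ are the directions of the (at least three) singular edges through $v$, and the cone angle at such a point is the cone angle of that edge; hence nonnegative curvature says exactly that every cone angle of $\link$ is at most $2\pi$, and by the spherical Gauss--Bonnet formula $\Vol_2(\link) = 4\pi - \sum_i(2\pi - \theta_i) \le 4\pi$. For integrality, parallel transport of a frame around a meridian of a singular edge is a rotation about that edge---an improper isometry would force the local quotient not to be a manifold---and $\pi_1(\reg{\link})$ is generated by such meridians, so the holonomy $\rho\colon \pi_1(\reg{\link}) \to \Orth(3)$ has image $\Gamma$ inside $\SO(3)$. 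Integrality makes $\Gamma$ preserve a lattice; since the automorphism group of a lattice is finite, $\Gamma$ is a finite subgroup of $\SO(3)$ satisfying the crystallographic restriction. In particular every edge cone angle lies in $\{\pi/3,\pi/2,2\pi/3,\pi,4\pi/3,3\pi/2,5\pi/3\}$, so each deficit is $\ge\pi/3$ and $\link$ has at most $11$ cone points, and $\Gamma$ is conjugate to one of $1, C_2, C_3, C_4, C_6, D_2, D_3, D_4, D_6, T, O$. If $\Gamma$ is cyclic the rotations are coaxial and $\link$ has only two cone points---a point of a singular edge, not a vertex---so for a singular vertex $\Gamma \in \{D_2, D_3, D_4, D_6, T, O\}$.

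I would then recover $\link$ from $\Gamma$. The developing map $D\colon \widetilde{\reg{\link}} \to \S^2$ is $\rho$-equivariant, hence descends to a $1$-Lipschitz map $\reg{\link} \to \S^2/\Gamma$; since $\link$ is compact and $\S^2/\Gamma$---the $(2,2,n)$-, $(2,3,3)$-, or $(2,3,4)$-orbifold sphere---is a complete spherical cone surface, this extends to a branched covering $\bar\phi\colon \link \to \S^2/\Gamma$ of some degree $d$. Because no cone angle of $\link$ exceeds $2\pi$, $\bar\phi$ branches only over the two or three orbifold points of $\S^2/\Gamma$, with every ramification index over an orbifold point of order $m$ at most $m$; and comparing areas gives $4\pi d/|\Gamma| = \Vol_2(\link) \le 4\pi$, so $d \le |\Gamma|$. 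Together with the Riemann--Hurwitz formula (the cover has genus $0$) this leaves, for each $\Gamma$, only finitely many combinatorial types of branched cover, and any branched cover of $\S^2/\Gamma$ of this shape carries a unique pulled-back spherical cone metric. So $\link$ is determined by the topological branched cover $\bar\phi$; there are no hidden continuous moduli.

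It remains to run the enumeration and to check realisability. For each $\Gamma \in \{D_2, D_3, D_4, D_6, T, O\}$ one lists the connected genus-$0$ branched covers of $\S^2/\Gamma$ with admissible ramification over the orbifold points, using the Riemann existence theorem to decide which ramification data actually occur (many Riemann--Hurwitz-consistent data do not), discards those whose induced holonomy is a proper subgroup of $\Gamma$ (counted under a smaller group, or yielding only a two-cone-point spindle), and identifies covers with isometric links. For realisability: gluing two truncated copies of the cone $C(\link)$ along their common boundary $\link$ gives a closed---hence complete and connected---polyhedral $3$-manifold whose singular edges carry the cone angles of $\link$ (all $\le 2\pi$) and whose holonomy is $\Gamma$ (which preserves a lattice), that is, a nonnegatively curved integral polyhedral $3$-manifold with a singular vertex of the given type. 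Running the enumeration yields exactly the $32$ types.

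The real work is the enumeration: for the larger groups, and especially for $O$ (where the \emph{a priori} bound $d \le |\Gamma|$ is weak), one must determine precisely which branched covers exist, which are connected of genus $0$, which have full holonomy $\Gamma$ rather than a proper subgroup, and which are pairwise non-isometric---a sizeable but finite bookkeeping. By contrast, the reduction to links and the branched-cover structure are routine once the developing-map formalism is in place.
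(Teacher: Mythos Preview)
Your approach is correct and essentially the same as the paper's: both reduce the classification of links to enumerating genus-$0$ branched covers of spherical orbifolds $\S^2/\Gamma$ with bounded ramification, via the developing map. The paper streamlines the bookkeeping by composing further to the two \emph{maximal} crystallographic rotation groups $S_4$ and $D_6$ (so only two base orbifolds need be handled rather than your six) and carries out the enumeration with dessins d'enfants rather than the Riemann existence theorem, but these are organisational rather than substantive differences.
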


	Thurston's work in two dimensions---particularly in \cite[§~6]{thurston1998shapes}---is a major motivation for studying integral metrics in three dimensions. Another motivation comes from the notion of an \emph{integral affine manifold with singularities}. These are manifolds which, away from a codimension 2 subset, have an atlas with transition functions in affine transformations whose linear parts have integer entries (for more details, see \cite[Defs.~3.1~\&~3.6]{MR2487600}). Integral polyhedral manifolds are examples of these. The regular locus of such a manifold is naturally the base of a torus bundle, and compactifications of these bundles are often Calabi--Yau manifolds (see e.g., \cite{MR1882331,MR1975331}). It is a well-known open question, whether or not there are finitely many families of Calabi--Yau threefolds (see \cite{wilson2021boundedness}). By analogy, this question motivates similar boundedness questions for the singular loci of nonnegatively curved integral polyhedral 3-manifolds, and \cref{res:singVertBnd} answers one such question.

	Noncompact polyhedral 3-manifolds with conical angles \(\leq\pi\) have been classified by Boileau, Leeb, and Porti in \cite[Thm.~4.1]{boileau2005geometrization}. In the noncompact case, when the conical angles are \(\leq c<2\pi\) and the singular locus is a submanifold, Cooper and Porti explain in \cite{MR2484703} that they can give an explicit upper bound on the number of singular components, depending only on \(c\). Brief reference is made to monodromy constraints by Porti and Weiss in \cite[§~2]{porti2007deforming}. But, to the author's knowledge, no general singularity bound (such as \cref{res:singVertBnd}) or classification (such as \cref{res:classificationOfSingularVertsShort}) has been produced when integral monodromy is imposed. These results and their proofs bring together Alexandrov geometry, ramified covering theory, and the classification of crystallographic groups.

	\subsection*{Layout} The rest of the paper is organised as follows.
	\begin{itemize}
		\item In \cref{sec:conv}, we make precise some key definitions used throughout the paper, including `polyhedral manifold', `nonnegatively curved', and `integral'.
		\item In \cref{sec:class}, we prove an expanded and more precise form of \cref{res:classificationOfSingularVertsShort}. We do this by relating the local geometry of any point to a ramified cover of one of two spherical orbifolds. We also deduce \cref{res:linkDiameterBoundedAwayFromPi}, a result needed for the next section, which states that there is \(\varepsilon_0\in(0,\pi)\) such that any singular vertex is \(\varepsilon_0\)-narrow.
		\item In \cref{sec:proof}, we prove \cref{res:epsVertBndAlexandrov} via a series of lemmas and then deduce \cref{res:singVertBnd}. The proof is constructive, and key to it are two results from Alexandrov geometry, namely the Bishop--Gromov inequality and Toponogov's theorem.
		\item In \cref{sec:further}, we consider some natural extensions of \cref{res:singVertBnd}.
		\item Finally, the \hyperref[sec:appendix]{appendix} contains the full classification mentioned in \cref{res:classificationOfSingularVertsShort} and some worked examples that illustrate the process by which this classification was produced. It is not technically needed for the exposition, but readers who want more detail than is provided in \cref{sec:class} are directed there.
	\end{itemize}

	\subsection*{Acknowledgements} The author would like to thank his doctoral supervisor, Dmitri Panov, for his support and mathematical insight, without which this paper would have been impossible. The author would also like to thank Anton Petrunin for his ideas that went into \cref{sec:proof}. Many thanks to Martin de Borbon and Steven Sivek for their comments on preliminary versions of the paper. This work was supported by the EPSRC grant EP/L015234/1 through the London School of Geometry and Number Theory and King's College London.

	\subsection{Preliminary Definitions}
	\label{sec:conv}

	Here we make precise a few key definitions used throughout the paper. Specifically, we define \emph{polyhedral \(n\)-manifolds} and concepts relating to \emph{nonnegative curvature} and \emph{integrality}. The notion of a polyhedral manifold is not consistent throughout the literature. For example, in \cite[Def.~3.4.1]{MR3930625}, Alexander, Kapovitch, and Petrunin require a finite triangulation; whereas in \cite[Def.~3.2.4]{burago2001course}, Burago, Burago, and Ivanov do not even require the triangulation to be \emph{locally} finite. Some authors use the term \emph{cone-manifold}, such as Cooper, Hodgson, and Kerckhoff in \cite{cooper2000three}. We adopt the following definition, in line with de Borbon and Panov in \cite[§~6.1.2]{BP2021polyhedral}.

	\begin{defn}\label{defn:polyhedralManifold}
		Let \(M\) be a topological \(n\)-manifold without boundary endowed with a complete, geodesic metric \(d\) (i.e., a complete metric for which the distance between two points is equal to the length of the shortest path joining them). Suppose that \(M\) admits a locally finite triangulation in which each simplex is isometric to a Euclidean simplex. We then call \(M\) a \emph{polyhedral \(n\)-manifold} and \(d\) a \emph{polyhedral metric}. This definition implies that a polyhedral manifold is connected, but not necessarily compact.
	\end{defn}

	\noindent We now define some important concepts relating nonnegative curvature.

	\begin{defn}\label{defn:conicalAngle}
		Let \(M\) be a polyhedral \(n\)-manifold. Each codimension 2 simplex \(\sigma\) of \(M\) has a dihedral angle associated to each \(n\)-simplex to which it belongs. The sum of these dihedral angles is called the \emph{conical angle} of \(\sigma\). The union of all codimension 2 simplices whose conical angles differ from \(2\pi\) is called the \emph{singular locus} of \(M\), denoted by \(\sing M\). Elements of \(\sing M\) are referred to as \emph{singularities} or \emph{singular points}, while the set \(\reg M\) of \emph{regular points} is called the \emph{regular locus} of \(M\). Each singularity has a well-defined \emph{codimension} \(k\in \{2,\ldots,n\}\) (for which, see \cite[§~6.2.2]{BP2021polyhedral}). We say that \(M\) is \emph{nonnegatively curved} if all its conical angles are at most \(2\pi\).
	\end{defn}

	It is not hard to see that, in a polyhedral 3-manifold, singular vertices (as defined on the first page) are precisely codimension 3 singularities. We complete this section by defining monodromy and integrality.

	\begin{defn}\label{defn:monodromy}
		Let \(M\) be a polyhedral \(n\)-manifold. The regular locus of \(M\) has a flat Riemannian metric. Parallel transport inside the regular locus gives rise to a \emph{monodromy map} \(\Mon\colon\pi_1(\reg M)\to\Orth(n)\). The image of this map is called the \emph{monodromy group} of \(M\), denoted by \(\Mon M\), and we say that \(M\) is \emph{integral} if the action of this group on \(\R^n\) preserves a lattice (of full rank). This is equivalent to \(\Mon M\) being conjugate (in \(\GL(n,\R)\)) to a subgroup of \(\GL(n,\Z)\).
	\end{defn}

	In 2 dimensions, integrality simply reduces to the requirement that each conical angle be a multiple of either \(\pi/3\) or \(\pi/2\) (at least in the orientable case). In 3 dimensions, the condition is more subtle and is explored in \cref{sec:class}.


\section{Classification of Singular Vertices} 
	\label{sec:class}

	\noindent Polyhedral manifolds are, by construction, \emph{locally conical}---that is, every point \(x\) in a polyhedral manifold \(M\) has a neighbourhood which is pointed isometric to a small ball about the tip of a polyhedral cone. A \emph{polyhedral cone} is a polyhedral manifold, homeomorphic to \(\R^n\), that is also a metric cone (see \cite[§~3.4]{MR3930625} for more details). The geometry of this cone is completely determined by the unit sphere about its tip, which we call the \emph{link} of \(x\), denoted by \(\link_xM\). Therefore, the local isometry type of \(x\) may be taken to mean the isometry type of \(\link_xM\). The link coincides with the \emph{space of directions} of Alexandrov geometry (for which, see \cite[§~3.6.6]{burago2001course}). When the manifold is 3-dimensional, the link of a point is a \emph{sphere with conical points}---see \cite[Def.~1.1]{mondello2016spherical} for a definition.

	\begin{rem}
		Our convention is to use \(\S^2\) to denote the unit sphere with its intrinsic metric, while using \(S^2\) simply to denote the underlying topological space.
	\end{rem}

	A point in a polyhedral 3-manifold is regular if and only if its link is isometric to \(\S^2\). The local isometry type of a codimension 2 singularity is determined by one number: the conical angle of the edge on which it lies. In the nonnegatively curved \emph{integral} case, the possible angles are given in \cref{res:integralConicalAngles}. The main aim of this section is to classify the local isometry types of codimension 3 singularities, or singular vertices, of nonnegatively curved integral polyhedral 3-manifolds. Our approach requires the following definition.

	\begin{defn}[c.f. {\cite[Def.~1.2.18]{lando2013graphs}}]\label{defn:ramifiedCover}
		Let \(S_1\) and \(S_2\) be spheres with conical points, with a surjection \(\varphi\colon S_1\to S_2\). Suppose that \(\varphi\) is a locally isometric covering map, except at the preimages of the conical points of \(S_2\), where it is locally the quotient by a finite group of isometries. We then say that \(S_1\) is a \emph{ramified cover} of \(S_2\) and refer to \(\varphi\) as the \emph{ramified covering map}. We will usually use a double-headed arrow, as in \(\varphi\colon S_1\surj S_2\), to represent a ramified cover. The \emph{degree} of \(\varphi\) is its degree as a covering map outside of the preimages of the conical points of \(S_2\), denoted by \(\deg\varphi\). In a sufficiently small neighbourhood of any point \(x_1\in S_1\), \(\varphi\) is the quotient by a finite group of isometries, the order of which is called the \emph{multiplicity} of \(x_1\), denoted by \(\mult{x_1}\). This means that the multiplicity of any preimage of a regular point of \(S_2\) is 1. For any \(x_2\in S_2\), the following holds:
		\begin{equation}\label{eq:degMult}
			\sum_{\mathclap{x_1\in\varphi^{-1}(x_2)}}\mult{x_1} = \deg\varphi.
		\end{equation}
	\end{defn}

	To classify the local isometry types in question, we show that each link can be expressed as a ramified cover of one of two spherical orbifolds, with nonnegative curvature implying restraints on the multiplicities. These ramified covers are determined by combinatorial data, which can be enumerated. The techniques for enumeration are explained from \cref{res:conePointsBoundForMonodromyD6} to \cref{proc:findingRamifiedCovers}, and the results are summarised in \cref{res:classificationOfSingularVerts} and given in full in the \hyperref[sec:appendix]{appendix}. The two spherical orbifolds in question come from the possible monodromy groups of integral polyhedral 3-manifolds, so we begin by classifying these.

	\begin{lem}\label{res:possibleMonGroupsIn3D}
		An (orientable) polyhedral 3-manifold is integral if and only if its monodromy group is isomorphic a subgroup of \(S_4\) or \(D_6\); i.e., to \(1\), \(C_2\), \(C_3\), \(C_4\), \(C_6\), \(D_2\), \(D_3\), \(D_4\), \(D_{6}\), \(A_4\), or \(S_4\).
	\end{lem}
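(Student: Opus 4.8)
The plan is to reduce the lemma to two classical facts: the classification of finite subgroups of $\SO(3)$, and the three-dimensional crystallographic restriction theorem (of which this lemma is essentially a repackaging). Since $M$ is orientable, $\Mon M$ is a subgroup of $\SO(3)$, and by \cref{defn:monodromy} integrality of $M$ means exactly that $\Mon M$ is conjugate in $\GL(3,\R)$ to a subgroup of $\GL(3,\Z)$. Every notion involved is invariant under conjugation in $\Orth(3)$, so the ambiguity in $\Mon M$ coming from the choice of basepoint is harmless.

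For the forward direction, suppose $M$ is integral. A suitable conjugate $g(\Mon M)g^{-1}$ then lies in $\GL(3,\Z)$ while also lying in the compact group $g\,\Orth(3)\,g^{-1}$; since $\GL(3,\Z)$ is discrete, $\Mon M$ is finite. By the classification of finite subgroups of $\SO(3)$, $\Mon M$ is therefore isomorphic to one of $C_n$, $D_n$, $A_4$, $S_4$, $A_5$. Next, each $g\in\Mon M$ is conjugate into $\GL(3,\Z)$, so its characteristic polynomial lies in $\Z[X]$; writing $g$ as the rotation through angle $\theta$, this polynomial is $(X-1)(X^2-2\cos\theta\,X+1)$, forcing $2\cos\theta\in\{-2,-1,0,1,2\}$ and hence $\theta\in\{0,\pm\pi/3,\pm\pi/2,\pm2\pi/3,\pi\}$. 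So every nontrivial element of $\Mon M$ has order $2$, $3$, $4$ or $6$ — the crystallographic restriction. This rules out $A_5$ (which has order-$5$ elements) and $C_n$, $D_n$ for $n=5$ and for $n\ge 7$, and, recalling $D_1\cong C_2$, it leaves precisely $1, C_2, C_3, C_4, C_6, D_2, D_3, D_4, D_6, A_4, S_4$.

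For the converse, I would use that each of these eleven isomorphism types is realised by a \emph{unique} conjugacy class of subgroups of $\SO(3)$ — a fact that can be extracted from the classification, the cyclic, dihedral and Klein four-group cases being checked directly from the positions of the rotation axes. It then suffices to produce one invariant lattice per type. The octahedral rotation group $S_4\subset\SO(3)$ acts on $\Z^3$ by the determinant-one signed permutation matrices, so $\Z^3$ is invariant under $S_4$ and under all its subgroups, which realise exactly the types $1, C_2, C_3, C_4, D_2, D_3, D_4, A_4, S_4$. The two remaining types $C_6$ and $D_6$ sit inside the order-$12$ dihedral group $D_6\subset\SO(3)$ generated by a rotation of order $6$ about a vertical axis together with the order-$2$ rotations about the six symmetry axes of a regular hexagon in the horizontal plane; this group preserves the hexagonal-prism lattice $\Lambda\oplus\Z e_3$, where $\Lambda$ is the triangular lattice in the plane. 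As every group in the list embeds in $S_4$ or in $D_6$, the converse follows.

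The step needing the most care is the converse, specifically the passage from ``isomorphic to a listed group'' to ``conjugate into $\GL(3,\Z)$'': this rests on the uniqueness up to conjugacy of the relevant subgroups of $\SO(3)$, which should be stated explicitly rather than taken for granted. It is also worth stressing that $C_6$ and $D_6$ do \emph{not} embed in $S_4$, so the single lattice $\Z^3$ genuinely fails to cover all cases and the hexagonal lattice is indispensable; and that orientability is used throughout to keep $\Mon M$ inside $\SO(3)$ — for non-orientable $M$ the answer would instead involve the thirty-two three-dimensional crystallographic point groups.
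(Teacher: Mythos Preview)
Your argument is correct and follows essentially the same approach as the paper: both hinge on the classification of finite subgroups of \(\SO(3)\) together with the fact that two such subgroups are isomorphic if and only if they are conjugate, reducing integrality to an isomorphism-type check against the list of 3-dimensional lattice-preserving rotation groups. The paper simply cites Coxeter for that list and the classification for the conjugacy fact, whereas you derive the forward direction via the crystallographic restriction on element orders and handle the converse by exhibiting explicit invariant lattices (\(\Z^3\) for the octahedral family, the hexagonal-prism lattice for \(D_6\) and \(C_6\)); your version is more self-contained but not substantively different.
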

	\begin{proof}
		The list given above agrees with the list of 3-dimensional lattice-preserving rotation groups in, e.g., \cite[§~15.6]{coxeter1989introduction}. It follows from the classification of finite subgroups of \(\SO(3)\) (for which, see \cite[Ch.~19]{MR965514}) that such subgroups are isomorphic if and only if they are conjugate. Therefore, it suffices to check the isomorphism type of the monodromy group.
	\end{proof}

	\begin{rem}\label{res:monGroupsActions}
		In the above classification, \(S_4\) acts as the rotational symmetries of a cube, and \(D_6\) as the rotational symmetries of a regular hexagonal prism.
	\end{rem}

	This allows us to list all the possible conical angles of a nonnegatively curved integral polyhedral 3-manifold (thereby classifying the local isometry types of all codimension 2 singularities). 

	\begin{cor}\label{res:integralConicalAngles}
		The possible conical angles (differing from \(2\pi\)) in a nonnegatively curved integral polyhedral 3-manifold are
		\begin{enumerate}
			\item \(\pi/2\), \(2\pi/3\), \(\pi\), \(4\pi/3\), and \(3\pi/2\), when the monodromy is a subgroup of \(S_4\); or
			\item \(\pi/3\), \(2\pi/3\), \(\pi\), \(4\pi/3\), and \(5\pi/3\), when the monodromy is a subgroup of \(D_6\).
		\end{enumerate}
	\end{cor}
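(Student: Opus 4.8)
The plan is to deduce this from \cref{res:possibleMonGroupsIn3D}: the local isometry type of a codimension $2$ singularity has already been noted to depend only on its conical angle $\theta$, so it suffices to determine which values $\theta\neq 2\pi$ can occur, and the key point is that $\theta$ is the rotation angle of the monodromy of a short loop linking the singular edge. First I would record the standard local picture: a neighbourhood of an interior point of a codimension $2$ simplex $\sigma$ with conical angle $\theta$ is isometric to $\comp_\theta\times\R$, where $\comp_\theta$ is the flat $2$-cone of total angle $\theta$ and the $\R$-factor points along $\sigma$. Hence a loop linking $\sigma$ has monodromy that fixes the $\sigma$-direction and acts on the orthogonal plane by the holonomy of $\comp_\theta$, namely rotation through $\theta$ about the line spanned by $\sigma$; this is an element of $\Mon M$ (in the non-orientable case, of the rotation subgroup $\Mon M\cap\SO(3)$, which is again finite and lattice-preserving, hence still covered by \cref{res:possibleMonGroupsIn3D}). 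The one convention-sensitive point is that the holonomy angle of $\comp_\theta$ equals $\theta$; replacing it by $2\pi-\theta$ changes nothing below, since the relevant finite groups are closed under inverses and the resulting sets of angles turn out to be symmetric under $\alpha\mapsto 2\pi-\alpha$. Nonnegative curvature gives $\theta\in(0,2\pi)$, and $\theta\neq 2\pi$ makes this rotation nontrivial.

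Next I would read off the angles. By \cref{res:possibleMonGroupsIn3D} and \cref{res:monGroupsActions}, the rotation group in question is conjugate to a subgroup $G$ of the cube rotation group $S_4$ or of the hexagonal-prism rotation group $D_6$, so $\theta$ is the rotation angle of some nonidentity $g\in G$. If $n$ denotes the order of $g$, then $n\in\{2,3,4\}$ when $G\le S_4$ and $n\in\{2,3,6\}$ when $G\le D_6$ — these being the possible orders of nonidentity elements of $S_4$ and of $D_6$ — and $\theta=2\pi k/n$ with $\gcd(k,n)=1$. Enumerating: $n=2$ gives $\pi$; $n=3$ gives $2\pi/3$ and $4\pi/3$; $n=4$ gives $\pi/2$ and $3\pi/2$; $n=6$ gives $\pi/3$ and $5\pi/3$. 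Collecting the cases $n\in\{2,3,4\}$ and $n\in\{2,3,6\}$ yields exactly the two lists in the statement. For the converse I would note that each listed value $\theta=2\pi k/n$ is realised by $M=\comp_\theta\times\R$: this is a nonnegatively curved polyhedral $3$-manifold with a single singular edge of conical angle $\theta$ whose monodromy group is the cyclic rotation group $C_n$, which preserves $\Lambda_n\times\Z$ for a rank-$2$ lattice $\Lambda_n$ invariant under rotation by $2\pi/n$, hence is integral; and $C_n\le S_4$ for $n\in\{2,3,4\}$ while $C_n\le D_6$ for $n\in\{2,3,6\}$.

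I do not expect a serious obstacle here: granted \cref{res:possibleMonGroupsIn3D}, the argument reduces to the local model of a codimension $2$ singularity together with a finite computation of element orders in $S_4$ and $D_6$. The only points needing mild care are fixing the convention that identifies the edge holonomy with rotation through $\theta$, and, if one wants the corollary without the orientability assumption built into \cref{res:possibleMonGroupsIn3D}, observing that the orientation-preserving part $\Mon M\cap\SO(3)$ of the monodromy group is still one of the listed rotation groups.
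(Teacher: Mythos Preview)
Your proposal is correct and follows essentially the same approach as the paper: reduce to \cref{res:possibleMonGroupsIn3D}, read off the possible orders of nontrivial elements in $S_4$ and $D_6$ (namely $2,3,4$ and $2,3,6$), and list the corresponding rotation angles in $(0,2\pi)$. The only cosmetic difference is in the realisability step: you exhibit each angle via the noncompact model $\comp_\theta\times\R$, whereas the paper uses doubles of prisms to get compact examples on $S^3$; both are valid under the paper's definition of polyhedral manifold.
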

	\begin{proof}
		The result follows by looking at possible orders of nontrivial elements of \(S_4\) and \(D_6\)---i.e., 2, 3, and 4; and 2, 3, and 6 respectively---and then looking for rotations by less than \(2\pi\) having such orders. All of them can be exhibited in integral polyhedral metrics on \(S^3\) by taking doubles of prisms.
	\end{proof}

	\begin{wrapfigure}[10]{r}{0pt}
		\begin{tikzcd}
			\widetilde{\link_xM} \arrow[rd, "\dev"] \arrow[dd, "p"'] &                                     \\
			                                                                    & \S^2 \arrow[d, two heads]           \\
			\link_xM \arrow[r, "\exists!", dashed, two heads]		                  	    & \S^2/\Mon_xM \arrow[d, two heads] \\
			                                                                    & \S^2/G                              
		\end{tikzcd}
		\caption{}
		\label{fig:ramifiedCover}
	\end{wrapfigure}
	Now that we have classified monodromy groups and conical angles, we can give the key result that underpins this section.

	\begin{prop}\label{res:ramifiedCoverForLinks}
		Let \(x\) be a singular vertex in a nonnegatively curved integral polyhedral 3-manifold \(M\). Then \(\link_xM\) is a ramified cover of \(\S^2/S_4\) or \(\S^2/D_6\).
	\end{prop}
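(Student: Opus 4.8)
The plan is to realise $\link_x M$ as a ramified cover by developing the spherical structure on its regular locus into $\S^2$ and then projecting to a quotient orbifold, following the scheme of \cref{fig:ramifiedCover}. Write $L \defeq \link_x M$ and let $\Gamma \defeq \Mon_x M$ denote the local monodromy group at $x$. Since a small metric ball about $x$ is homeomorphic to $\R^3$, hence orientable, we have $\Gamma \le \SO(3)$; and since $\Gamma$ is a subgroup of $\Mon M$, which is finite and --- by integrality --- preserves a full-rank lattice, the same is true of $\Gamma$. Thus $\Gamma$ is a finite lattice-preserving rotation group, and is therefore conjugate in $\SO(3)$ into the rotation group $G$ of either the cube ($\isom S_4$) or the regular hexagonal prism ($\isom D_6$) (cf.\ \cref{res:possibleMonGroupsIn3D} and \cref{res:monGroupsActions}); fix such a $G$ and assume $\Gamma \le G$. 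Recall also that $L$ is a compact sphere with conical points, that $\reg L$ carries a metric of constant curvature $1$ whose holonomy image is precisely $\Gamma$, and that the conical angles of $L$ --- being the conical angles of the singular edges of $M$ through $x$ --- lie in $(0, 2\pi)$ by nonnegative curvature.

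First I would fix a developing map $\dev\colon \widetilde{\reg L} \to \S^2$ for this spherical structure, equivariant with respect to the holonomy $\rho\colon \pi_1(\reg L) \surj \Gamma$, and compose it with the quotient map $q\colon \S^2 \surj \S^2/\Gamma$, which is itself a ramified cover, branched over the images of the points of $\S^2$ fixed by nontrivial elements of $\Gamma$. Because $q$ is $\Gamma$-invariant while $\dev$ is $\rho$-equivariant with $\rho$ onto $\Gamma$, the composite $q \circ \dev$ is invariant under the deck action of $\pi_1(\reg L)$ and so descends to a continuous map $g\colon \reg L \to \S^2/\Gamma$ --- the dashed arrow of \cref{fig:ramifiedCover} followed by the projection to $\S^2/\Gamma$. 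The heart of the argument is then to check that $g$ extends to a ramified cover $f\colon L \surj \S^2/\Gamma$; post-composing with the projection $\S^2/\Gamma \surj \S^2/G$ --- which is again a ramified cover, since $\S^2 \surj \S^2/\Gamma$ and $\S^2 \surj \S^2/G$ both are; alternatively, one re-runs the construction with $G$ in place of $\Gamma$ throughout --- then gives the proposition.

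To control $g$ near a point $y \in \reg L$: there $\dev$ is a local isometry onto a small spherical disc $D'$ about $\dev(y)$, so near $y$ the map $g$ is the composite $D' \to D'/\mathrm{Stab}_\Gamma(\dev(y))$, and $\mathrm{Stab}_\Gamma(\dev(y))$ is a finite cyclic group $C_k$ (trivial exactly when $\dev(y)$ lies on no rotation axis of $\Gamma$). This is precisely what the definition of a ramified cover asks: a local isometry when $g(y)$ is a regular point of $\S^2/\Gamma$, a finite-group quotient when it is a conical point. To control $g$ near a conical point $z$ of $L$ of angle $\theta \in (0,2\pi)$: the holonomy of a small loop about $z$ is the rotation by $\theta$ about some axis $\ell$; it lies in $\Gamma$, is nontrivial, and fixes the two poles of $\ell$, and $\dev$ carries a punctured neighbourhood of $z$ into a punctured neighbourhood of one of them, $N$, whose stabiliser $\mathrm{Stab}_\Gamma(N)$, say $C_n$, contains the rotation --- forcing $\theta = 2\pi k/n$ for an integer $k$ with $1 \le k < n$. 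Tracking the total turning angle $\theta$ with which the developed image winds about $N$, and composing with $q$, shows that $g$ extends over $z$ by $f(z) \defeq q(N)$, a conical point of $\S^2/\Gamma$ of angle $2\pi/n$, near which $f$ is the degree-$k$ branched cover of the $(2\pi/n)$-cone by the $\theta$-cone --- equivalently, the quotient of a disc by a cyclic group of order $k$. Surjectivity of $f$ is then soft: $g(\reg L)$ is open (as $g$ is everywhere a local isometry or a finite quotient, both open maps), adjoining the finitely many points $f(z)$ keeps the set open (each has a neighbourhood isometric to a $(2\pi/n)$-cone lying in $f(L)$), while $f(L)$ is closed by compactness of $L$; as $\S^2/\Gamma$ is connected, $f(L) = \S^2/\Gamma$.

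I expect the local analysis at the conical points of $L$ to be the main obstacle --- establishing both that $g$ extends there and that the extension is, locally, a genuine branched covering. The decisive input is that the loop-holonomy at a conical point is the rotation by \emph{exactly} the cone angle $\theta$ and lies in the finite group $\Gamma$: finiteness of the relevant point-stabiliser then forces $\theta$ to be $2\pi$ times a rational whose denominator divides that stabiliser's order, which is exactly the numerical compatibility making the cone-to-cone map a ramified covering. Everything else --- the global open--closed argument, and the passage from $\S^2/\Gamma$ down to $\S^2/S_4$ or $\S^2/D_6$ --- should be comparatively routine.
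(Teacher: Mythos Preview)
Your proposal is correct and follows essentially the same route as the paper: construct the map \(\link_xM\surj\S^2/\Mon_xM\) by developing into \(\S^2\) and quotienting, then compose with \(\S^2/\Mon_xM\surj\S^2/G\) for \(G\in\{S_4,D_6\}\) using \cref{res:possibleMonGroupsIn3D}. The only cosmetic difference is that the paper works with the \emph{completed} universal cover \(\widetilde{\link_xM}\) (so that the developing map and the assignment \(u\mapsto[v]\) are defined at once on all of \(L\)), whereas you develop \(\widetilde{\reg L}\) and then extend over the conical points by the explicit cone-to-cone analysis; your version therefore spells out the verification of the ramified-cover axioms that the paper leaves implicit.
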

	\begin{proof}
		Let \(\Mon_xM\) denote the \emph{local} monodromy group of \(M\) at \(x\); that is, the monodromy group of a small neighbourhood of \(x\). We note two key facts about this group. Firstly, it is a subgroup of \(\Mon M\), since a small neighbourhood of \(x\) intersected with \(\reg M\) inherits its Riemannian metric from \(\reg M\). Secondly, it can be viewed as the \emph{holonomy} group of \(\link_xM\) (in the \((\SO(3),\S^2)\)-cone-manifold sense, for which, see \cite[§~4.1]{cooper2000three}). Since \(\Mon_xM\) is a finite subgroup of \(\SO(3)\), the quotient \(\S^2/\Mon_xM\) is an orbifold. We construct a ramified covering map \(\link_xM\surj\S^2/\Mon_xM\) as follows. Let \(\ucov\colon\widetilde{\link_xM}\to \link_xM\) be the completion of the universal cover and \(\dev\colon\widetilde{\link_xM}\to\S^2\) the corresponding developing map. Given \(u\in \link_xM\), consider the set \(\dev(\ucov^{-1}(u))\). If \(v\) and \(v'\in\dev(\ucov^{-1}(u))\), then the definition of holonomy implies that there is some \(g\in\Mon_xM\) such that \(v'=g\cdot v\). The following assignment therefore gives a well-defined ramified covering map:
		\begin{align*}
			\link_xM &\surj\S^2/\Mon_xM,\\
			u &\mapsto	 [v], \text{ where } v\in\dev(\ucov^{-1}(u)).
		\end{align*}

		The above map may be seen as the unique map in \cref{fig:ramifiedCover} making the diagram commute. By \cref{res:possibleMonGroupsIn3D}, \(\Mon_xM\) is a subgroup of \(G=S_4\) or \(D_6\), so we can compose this map with the quotient map \(\S^2/\Mon_xM\surj\S^2/G\) to get a ramified covering map \({\link_xM\surj\S^2/G}\). It is useful to note that \(\S^2/S_4\cong\S^2(\pi/2,2\pi/3,\pi)\) and \(\S^2/D_6\cong\S^2(\pi/3,\pi,\pi)\) (see \cite[Ex.~3.2]{cooper2000three} for an explanation of this notation) and that the areas of these orbifolds are \(\pi/6\) and \(\pi/3\) respectively.
	\end{proof}

	\begin{rem}\label{res:linksAreRamifiedCovers}
	This result implies that classifying links of singular vertices is equivalent to classifying spheres with at least three conical points and with conical angles at most \(2\pi\) that are ramified covers of \(\S^2/S_4\) or \(\S^2/D_6\). The rest of this section is devoted to classifying these ramified covers.
	\end{rem}

	We begin by classifying ramified covers of \(\S^2/D_6\). First, we give an upper bound on the number of conical points.

	\begin{lem}\label{res:conePointsBoundForMonodromyD6}
		Let \(S\) be a sphere with conical points and with conical angles at most \(2\pi\) that is a ramified cover of \(\S^2/D_6\). Then \(S\) has at most three conical points.
	\end{lem}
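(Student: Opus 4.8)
The plan is to combine the multiplicity identity \eqref{eq:degMult} with Gauss--Bonnet for spheres with conical points. Write \(\varphi\colon S\surj\S^2/D_6\) for the ramified cover and \(d=\deg\varphi\). Using the identification \(\S^2/D_6\isom\S^2(\pi/3,\pi,\pi)\) noted in the proof of \cref{res:ramifiedCoverForLinks}, call its three conical points \(q_1\) (angle \(\pi/3\)), \(q_2\), and \(q_3\) (both of angle \(\pi\)). Since \(\varphi\) is a local isometry away from \(\varphi^{-1}\{q_1,q_2,q_3\}\) and \(\S^2/D_6\) is smooth there, every conical point of \(S\) lies over one of \(q_1,q_2,q_3\). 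Moreover, near a preimage \(x\) of \(q_i\) the map \(\varphi\) is the quotient of a neighbourhood of \(x\) by a cyclic group of rotations of order \(\mult x\), so the conical angle of \(x\) equals \(\mult{x}\cdot\theta(q_i)\).

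Next I would read off the multiplicity constraints forced by the hypothesis that every conical angle of \(S\) is at most \(2\pi\). Over \(q_2\) (and likewise \(q_3\)) a preimage \(x\) has angle \(\pi\cdot\mult x\le 2\pi\), so \(\mult x\in\{1,2\}\); the multiplicity-\(2\) preimages have angle exactly \(2\pi\) and are smooth, while the multiplicity-\(1\) ones are the genuine conical points over \(q_2\). Over \(q_1\) one gets \((\pi/3)\mult x\le 2\pi\), so \(\mult x\in\{1,\dots,6\}\), with multiplicity \(6\) again giving a smooth point. Let \(N_i\) be the number of preimages of \(q_i\) and \(c_i\) the number of those that are conical points of \(S\); then \(c_i\le N_i\), and \eqref{eq:degMult} applied over \(q_2,q_3\) gives \(c_i=2N_i-d\) for \(i=2,3\), since there the multiplicities are only \(1\) or \(2\).

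I would then apply the Gauss--Bonnet area formula for a sphere with conical points, \(\Vol(S)=4\pi-\sum(2\pi-\theta)\), the sum running over the conical points of \(S\). On the one hand \(\Vol(S)=d\cdot\Vol(\S^2/D_6)=\pi d/3>0\). On the other hand, the preimages of \(q_2\) and \(q_3\) contribute \((c_2+c_3)\pi\) to the defect sum, while the contribution of the conical points over \(q_1\) telescopes: using \(\sum_{x\in\varphi^{-1}(q_1)}\mult x=d\) it works out to \(2\pi N_1-\pi d/3\). Substituting and cancelling the \(\pi d/3\) terms leaves the identity \(2N_1+c_2+c_3=4\). (Equivalently, this identity follows from Riemann--Hurwitz, \(N_1+N_2+N_3=d+2\), together with \(N_i=(c_i+d)/2\) for \(i=2,3\); the two routes are equivalent since \(\Vol(S)\) is determined by \(d\).)

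Finally, surjectivity of \(\varphi\) forces \(N_1\ge 1\), so the total number of conical points of \(S\) is \(c_1+c_2+c_3\le N_1+c_2+c_3=4-N_1\le 3\), which is the claim. I expect the only real subtlety to be the bookkeeping in the Gauss--Bonnet step: correctly telescoping the \(q_1\)-contribution, and keeping straight which multiplicities (\(2\) over \(q_2,q_3\), and \(6\) over \(q_1\)) yield smooth rather than conical points. Everything else is immediate from \eqref{eq:degMult} and the angle bound.
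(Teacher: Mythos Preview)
Your argument is correct. The bookkeeping checks out: over \(q_1\) the defect sum is \(\sum_{x\in\varphi^{-1}(q_1)}(2\pi-\tfrac{\pi}{3}\mult x)=2\pi N_1-\tfrac{\pi}{3}d\) (the multiplicity-\(6\) preimages contribute zero, so summing over all preimages is harmless), and combining this with the \(q_2,q_3\) contributions and \(\Vol(S)=\pi d/3\) indeed collapses to \(2N_1+c_2+c_3=4\), from which \(c_1+c_2+c_3\le N_1+c_2+c_3=4-N_1\le 3\).

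Your route is genuinely different from the paper's. The paper argues via the \emph{dessin d'enfant} \(\varphi^{-1}([z_1z_2])\), where \([z_1z_2]\) joins the two angle-\(\pi\) points of \(\S^2/D_6\): the multiplicity bound over \(z_1,z_2\) forces this graph to have maximum degree \(2\), hence to be an even circle or a segment, and a case analysis on the faces (which correspond to preimages of the angle-\(\pi/3\) point) gives the bound. Your approach is a clean numerical count via Gauss--Bonnet/Riemann--Hurwitz and is arguably quicker for the bare inequality. The paper's graph-theoretic argument, however, does more work: it simultaneously \emph{constructs} all the ramified covers (segments of length \(1,\dots,5\)), which is exactly what is needed for the subsequent classification in \cref{res:linksWithMonodromyInD6}. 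So your proof establishes the lemma efficiently, while the paper's proof is chosen to feed directly into the enumeration that follows.
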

	\begin{proof}
		Let \(\varphi\colon S\surj\S^2/D_6\) be the ramified covering map, and consider the preimage \(\varphi^{-1}([z_1z_2])\), where \([z_1z_2]\) is the shortest path joining the two conical points of angle \(\pi\) in \(\S^2/D_6\). This preimage can be viewed as a finite connected graph embedded in \(S^2\), and is in fact the \emph{dessin d'enfant} for the ramified covering map \(\varphi\) (see \cite[§~2.1]{lando2013graphs}). This graph may be given a 2-colouring by marking whether a vertex maps to \(z_1\) or \(z_2\). The fact that every preimage of \(z_1\) or \(z_2\) has multiplicity at most 2 implies that every vertex of the graph has degree at most 2. The only connected, 2-coloured graphs with maximum degree 2 are circles of even length, or line segments. The preimages of \(z_3\) (the conical point of angle \(\pi/3\)) correspond to the faces of the graph, the multiplicity being half the (graph-theoretic) degree of the face. When the graph is a circle, the only conical points are the two preimages of \(z_3\), in which case \(S\) has two conical points; unless the circle has length 12, in which case \(S\cong\S^2\). The circle cannot have length greater than 12, or else the preimages of \(z_3\) would have conical angle \(>2\pi\). When the graph is a segment, it cannot have length greater than 6, or else the one preimage of \(z_3\) would have conical angle \(>2\pi\). When the segment has length 6, the preimage of \(z_3\) is regular and so \(S\) has two conical points. When the segment has length less than 6, the conical points of \(S\) are the one preimage of \(z_3\) and the two endpoints of the segment.
	\end{proof}

	The above proof gives us a way to construct the possible ramified covers, specifically by finding graphs that could correspond to \(\varphi^{-1}([z_1z_2])\). This technique is used more thoroughly when considering \(\S^2/S_4\). Since we are only interested in spheres with three or more conical points, we are restricted to the five cases where \(\varphi^{-1}([z_1z_2])\) is a segment of length at most 5. We summarise this in the following result.

	\begin{prop}\label{res:linksWithMonodromyInD6}
		Up to isometry, there are five spheres with at least three conical points and with conical angles at most \(2\pi\) that are ramified covers of \(\S^2/D_6\). They are \(\S^2(n\pi/3,\pi,\pi)\), for all \(n\in\{1,\ldots,5\}\).
	\end{prop}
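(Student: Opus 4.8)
The plan is to build on the proof of \cref{res:conePointsBoundForMonodromyD6}, which has already identified the relevant combinatorics, then to read off the conical angles and, conversely, to check that each of the five candidate spheres is genuinely realised as a ramified cover.

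First, suppose \(\varphi\colon S\surj\S^2/D_6\) is a ramified cover with \(S\) having at least three conical points. The proof of \cref{res:conePointsBoundForMonodromyD6} shows that the dessin \(\varphi^{-1}([z_1z_2])\) cannot be a circle (that case gives at most two conical points), so it is a line segment, say with \(n\) edges; its single complementary face in \(S^2\) contains the unique preimage of \(z_3\), and demanding that this point be conical forces \(n\le 5\). I would then determine the three conical angles from the multiplicities: each endpoint of the segment is a vertex of degree \(1\), hence of multiplicity \(1\), and since it lies over \(z_1\) or \(z_2\) --- each of conical angle \(\pi\) in \(\S^2/D_6\) --- it has conical angle \(\pi\) in \(S\); every interior vertex has degree \(2\) and multiplicity \(2\), hence conical angle \(2\pi\) and is regular; and the preimage of \(z_3\) has multiplicity equal to half the graph-theoretic degree of the face, namely \(\tfrac12\cdot 2n=n\), so it has conical angle \(n\pi/3\). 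Thus \(S\isom\S^2(n\pi/3,\pi,\pi)\) with \(n\in\{1,\dots,5\}\), and these five are pairwise non-isometric because a sphere with three conical points is determined up to isometry by its triple of angles.

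For the converse I would exhibit, for each \(n\in\{1,\dots,5\}\), a concrete ramified cover \(S_n\surj\S^2/D_6\) with \(S_n\isom\S^2(n\pi/3,\pi,\pi)\). Writing \(\S^2/D_6\isom\S^2(\pi/3,\pi,\pi)\) as the double of the spherical triangle \(T\) with angles \(\pi/6,\pi/2,\pi/2\) --- so that \([z_1z_2]\) is the image of the edge of \(T\) opposite the \(\pi/6\)-vertex --- one glues \(2n\) copies of \(T\) along the incidence pattern dual to the length-\(n\) segment (consecutive copies sharing preimages of \([z_1z_2]\), and all \(\pi/6\)-vertices identified to a single point). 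This yields a spherical cone surface carrying a degree-\(n\) map to \(\S^2/D_6\) which is an isometric covering away from the cone points and a finite quotient at each of them, i.e.\ a ramified cover. A quick bookkeeping check confirms the answer: the degree is \(n\), so the area of \(S_n\) is \(n\cdot\tfrac{\pi}{3}\), matching the value \(n\pi/3+\pi+\pi-2\pi\) given by the Gauss--Bonnet area formula for \(\S^2(n\pi/3,\pi,\pi)\). Alternatively, one can invoke the standard correspondence between dessins d'enfants and ramified covers \cite{lando2013graphs}: the length-\(n\) segment is a valid planar bipartite map whose ramification data is exactly the profile computed above, and for \(n\le 5\) all the resulting conical angles are \(\le 2\pi\).

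The only step that is not purely formal is the realisation half: one must make sure the combinatorial gluing closes up to a \emph{sphere} with precisely the three predicted cone points (and no hidden singularities) and that the projection it carries fits \cref{defn:ramifiedCover}. I expect this to be the main obstacle, and it can be dispatched either by a short Euler-characteristic and vertex-link computation for the explicit triangle gluing or by citing the dessin correspondence; the bound \(n\le 5\), the count of conical points, and the angle computation all follow immediately from the proof of \cref{res:conePointsBoundForMonodromyD6}.
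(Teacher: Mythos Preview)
Your proposal is correct and follows essentially the same approach as the paper: the paper simply observes that the analysis in the proof of \cref{res:conePointsBoundForMonodromyD6} already restricts to the five cases where \(\varphi^{-1}([z_1z_2])\) is a segment of length \(n\le 5\), and your write-up spells out the angle computation and the realisation step that the paper leaves implicit. If anything, your converse (exhibiting each \(\S^2(n\pi/3,\pi,\pi)\) as an actual ramified cover via the triangle gluing or the dessin correspondence) is more careful than the paper, which takes this for granted.
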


	We now move on to the classification of ramified covers of \(\S^2/S_4\). The same \emph{dessins d'enfants} technique as before is used, but this time it is combined with more involved considerations of the ramification data. To elaborate on this, suppose that \({\varphi\colon S\surj\S^2/S_4}\) is a ramified cover and that \(y_1\), \(y_2\), and \(y_3\) are the conical points of \(\S^2/S_4\) of angle \(\pi\), \(2\pi/3\), and \(\pi/2\) respectively. The conical angles of \(S\) allow us to calculate its area using the spherical Gauss--Bonnet formula (for which, see \cite[Thm.~3.15]{cooper2000three}) and therefore the degree of \(\varphi\). On the other hand, if the preimages \(x_i^1,\ldots,x_i^{k_i}\) of \(y_i\) have multiplicities \(m_i^1,\ldots,m_i^{k_i}\) respectively, then they have conical angles \(m_i^1\alpha_i,\ldots,m_i^{k_i}\alpha_i\) respectively (where \(\alpha_i\) is the conical angle of \(y_i\)), and, by \cref{eq:degMult} the degree of \(\varphi\) is \(m_i^1+\ldots+m_i^{k_i}\). The list of multiplicities of the preimages of \(y_1\), \(y_2\), and \(y_3\) is called a \emph{multiplicity datum}, and calculations of the degree of \(\varphi\) these two different ways we refer to as \emph{area-multiplicity calculations}. These two calculations allow us to exclude certain multiplicity data from appearing, thereby making it feasible to list all relevant ramified covers. This is explained in more detail later, but first, as before, we begin with a bound on the number of conical points.

	\begin{lem}\label{res:conePointsBoundForMonodromyS4}
		Let \(S\) be a sphere with conical points and with conical angles at most \(2\pi\) which is a ramified cover of \(\S^2/S_4\). Then \(S\) has at most five conical points.
	\end{lem}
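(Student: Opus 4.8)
The plan is to run an area--multiplicity calculation and feed its output into a short combinatorial estimate. Write $\varphi\colon S\surj\S^2/S_4$ for the ramified covering map, put $d=\deg\varphi$, and for $i\in\{1,2,3\}$ let $k_i$ denote the number of preimages of $y_i$ and $\alpha_i$ the conical angle of $y_i$ in $\S^2/S_4$, so that $\alpha_1=\pi$, $\alpha_2=2\pi/3$ and $\alpha_3=\pi/2$. Every conical point of $S$ lies over one of $y_1,y_2,y_3$, so it suffices to bound the number of preimages of the $y_i$ that are genuinely conical, i.e.\ of multiplicity strictly less than the maximum possible. The first step is to prove the identity $k_1+k_2+k_3=d+2$. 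Since $\varphi$ is a local isometry away from a finite set, the area of $S$ equals $d\cdot\tfrac{\pi}{6}$; on the other hand a preimage of $y_i$ of multiplicity $m$ has conical angle $m\alpha_i$, so summing the contributions $2\pi-m\alpha_i$ over all preimages of each $y_i$ (the regular ones contributing nothing) and using \cref{eq:degMult} and the spherical Gauss--Bonnet formula gives $\tfrac{\pi}{6}d=4\pi-\sum_{i}(2\pi k_i-\alpha_i d)=4\pi-2\pi(k_1+k_2+k_3)+\tfrac{13\pi}{6}d$; rearranging yields the identity.

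The second step turns the hypothesis ``all conical angles of $S$ are at most $2\pi$'' into constraints. A preimage of $y_i$ of multiplicity $m$ has $m\alpha_i\le 2\pi$, hence $m\le n_i$ where $(n_1,n_2,n_3)=(2,3,4)$, so the multiplicities over $y_i$ form a partition of $d$ into $k_i$ parts, each at most $n_i$; in particular $d\le n_i k_i$. If $t$ of these parts equal $n_i$ then $d\le n_i t+(n_i-1)(k_i-t)$, so $t\ge\max(0,\,d-(n_i-1)k_i)$. Since a preimage of $y_i$ is genuinely conical exactly when its multiplicity is less than $n_i$, the number of conical points of $S$ over $y_i$ is at most $k_i-\max(0,\,d-(n_i-1)k_i)$ (and exactly $2k_1-d$ for $i=1$). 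Summing over $i$ and substituting $k_2+k_3=d+2-k_1$, the number $N$ of conical points of $S$ satisfies
\[
  N\ \le\ k_1+2-\max\bigl(0,\,d-2k_2\bigr)-\max\bigl(0,\,d-3k_3\bigr).
\]

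The last step is to show this right-hand side is at most $5$, using $k_1\ge d/2$, $k_2\ge d/3$, $k_3\ge d/4$, the identity $k_1+k_2+k_3=d+2$, and $d\ge 1$. I would split into three cases. If $k_2>d/2$, then $k_1+k_2\ge d+1$, forcing $k_3\le 1$ and hence $k_3=1$; the unique preimage of $y_3$ then has multiplicity $d$, so $d\le 4$, whence $k_1=d+1-k_2\le\lceil d/2\rceil\le 2$ and $N\le k_1+2\le 4$. If $k_2\le d/2$ but $k_3>d/3$, then $\max(0,d-3k_3)=0$ and, substituting the identity, the bound becomes $k_2-k_3+4$; as $k_2=d+2-k_1-k_3<\tfrac{d}{6}+2$ while $k_3>\tfrac{d}{3}$, one has $k_2-k_3<2$, so $N\le 5$. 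If $k_2\le d/2$ and $k_3\le d/3$ (which forces $d\ge 3$), the bound becomes $k_2+2k_3+4-d$; since $k_2+k_3\le\tfrac{d}{2}+2$ and $k_3\le\tfrac{d}{3}$ we get $k_2+2k_3\le\tfrac{5d}{6}+2$, hence $N\le 6-\tfrac{d}{6}\le 5$.

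I expect the main obstacle to be that no single clean inequality settles the last step: the ``worst'' configurations of conical points over $y_1$, $y_2$ and $y_3$ cannot all occur at once, and ruling out their coexistence needs the Gauss--Bonnet identity played against the multiplicity bounds, which is exactly what the three-way case split does. None of this is deep, but the bookkeeping there, and in reaching the displayed inequality, is the fiddliest part. (The bound $5$ is attained: the degree-$6$ cover with multiplicity data $(2,2,2)$ over $y_1$, $(2,2,2)$ over $y_2$ and $(3,3)$ over $y_3$ is $\S^2(4\pi/3,4\pi/3,4\pi/3,3\pi/2,3\pi/2)$.)
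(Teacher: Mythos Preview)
Your argument is correct and takes a genuinely different route from the paper's. The paper first invokes \cref{res:integralConicalAngles} to get \(\alpha_i\le 3\pi/2\), applies Gauss--Bonnet to obtain \(n<8\), and then explicitly enumerates the finitely many angle tuples with \(n\in\{6,7\}\), ruling each out by computing the area (hence the degree) and checking it against the number of preimages of \(y_3\). Your approach instead derives the Riemann--Hurwitz-type identity \(k_1+k_2+k_3=d+2\) and bounds the number of conical preimages over each branch point by a pigeonhole on multiplicities, leading to a uniform inequality and a short case split on the signs of \(d-2k_2\) and \(d-3k_3\). The advantage of your method is that it uses only the stated hypothesis (angles \(\le 2\pi\)) rather than the sharper bound \(3\pi/2\) from \cref{res:integralConicalAngles}, and it avoids listing individual angle tuples; the paper's method is more hands-on and perhaps easier to follow for a reader meeting these area--multiplicity calculations for the first time. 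One small presentational point: in your Case~3 you write \(N\le 6-d/6\le 5\), but the second inequality only holds literally for \(d\ge 6\); for \(d\in\{3,4,5\}\) you should instead note that \(6-d/6<6\) and \(N\in\Z\) force \(N\le 5\).
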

	\begin{proof}
		Let \(x_1,\ldots,x_n\) be the conical points of \(S\), having conical angles \(\alpha_1,\ldots,\alpha_n\) respectively. We know that \(S\) has some positive area \(A_S\), and we know from \cref{res:integralConicalAngles} that \(\alpha_i\leq3\pi/2\) for all \(i\in\{1,\ldots,n\}\). Furthermore, \(S\) satisfies the following spherical Gauss--Bonnet formula, which follows from \cite[Thm.~3.15]{cooper2000three}:
		\[
			A_S+\sum_{i=1}^{n}(2\pi-\alpha_i)=4\pi.
		\]
		This gives the following inequality:
		\begin{align*}
			n\pi/2 \leq \sum_{i=1}^n(2\pi-\alpha_i) &= 4\pi - A_S,\\
						    &< 4\pi,\\
						  n &< 8.
		\end{align*}

		Thus, \(n\) is at most 7. Suppose first that \(n\) \emph{is} \(7\). Then by Gauss--Bonnet, the possible values for the \(\alpha_i\) are as follows. Firstly, \(\alpha_i=3\pi/2\) for all \(i\in\{1,\ldots,7\}\)---label such a sphere by \(S_1\). Secondly, \(\alpha_i=3\pi/2\) for all \(i\in\{1,\ldots,6\}\) and \(\alpha_7=4\pi/3\)---label this \(S_2\). Lastly, \(\alpha_i=3\pi/2\) for all \(i\in\{1,\ldots,5\}\) and \(\alpha_6=\alpha_7=4\pi/3\)---label this \(S_3\). We will show that none of these spheres can in fact occur as ramified covers of \(\S^2/S_4\).

		By Gauss--Bonnet, we have \(A_{S_1}=\pi/2\), which is thrice the area of \(\S^2/S_4\). Hence, the ramified covering map \(\varphi\colon S\surj \S^2/S_4\) has degree 3. But all the \(x_i\) are preimages of \(y_3\), the conical point of angle \(\pi/2\), and so \(y_3\) has at least seven preimages. This is a contradiction.\\
		Similarly, we have \(A_{S_2}=\pi/3\), so \(S_2\) is a degree 2 cover of \(\S^2/S_4\). But \(y_3\) has at least six preimages: a contradication.\\
		Finally, we have \(A_{S_3}=\pi/6\), which implies that \(S_3\cong\S^2/S_4\). But this is clearly false. Thus, we cannot have \(n=7\).

		Suppose now that \(n\) is \(6\). By Gauss--Bonnet, the possible values for the \(\alpha_i\) are as follows. Firstly, \(\alpha_i=3\pi/2\) for all \(i\in\{1,\ldots,6\}\)---label such a sphere by \(S_4\). Secondly, \(\alpha_i=3\pi/2\) for \(i\in\{1,\ldots,5\}\) and \(\alpha_6=4\pi/3\)---label this \(S_5\). Lastly, \(\alpha_i=3\pi/2\) for \(i\in\{1,\ldots,5\}\) and \(\alpha_6=\pi\)---label this \(S_6\). In a similar way to before, we will show that none of these spheres correspond to genuine ramified covers.

		By Gauss--Bonnet, we have \(A_{S_4}=\pi\), so \(S_4\) is a degree 6 cover of \(\S^2/S_4\). But \(y_3\) has six preimages of multiplicity 3, so the degree is at least 18: a contradiction.\\
		Similarly, we have \(A_{S_5}=5\pi/6\), so \(S_5\) is a degree 5 cover of \(\S^2/S_4\). But \(y_3\) has five preimages of multiplicity 3, so the degree is at least 15: a contradiction.\\
		Finally, we have \(A_{S_6}=\pi/2\), so \(S_6\) is a degree 3 cover of \(\S^2/S_4\). But \(y_3\) has at least five preimages: a contradiction.

		Thus, \(n\) is at most 5. But there is indeed a ramified cover of \(\S^2/S_4\) with five conical points, as shown in \cref{fig:5ConePoints}. It has two conical points of angle \(3\pi/2\) and three of angle \(4\pi/3\).
	\end{proof}

	\begin{figure}[!ht]
		\centering
		\includegraphics[width=0.9\textwidth]{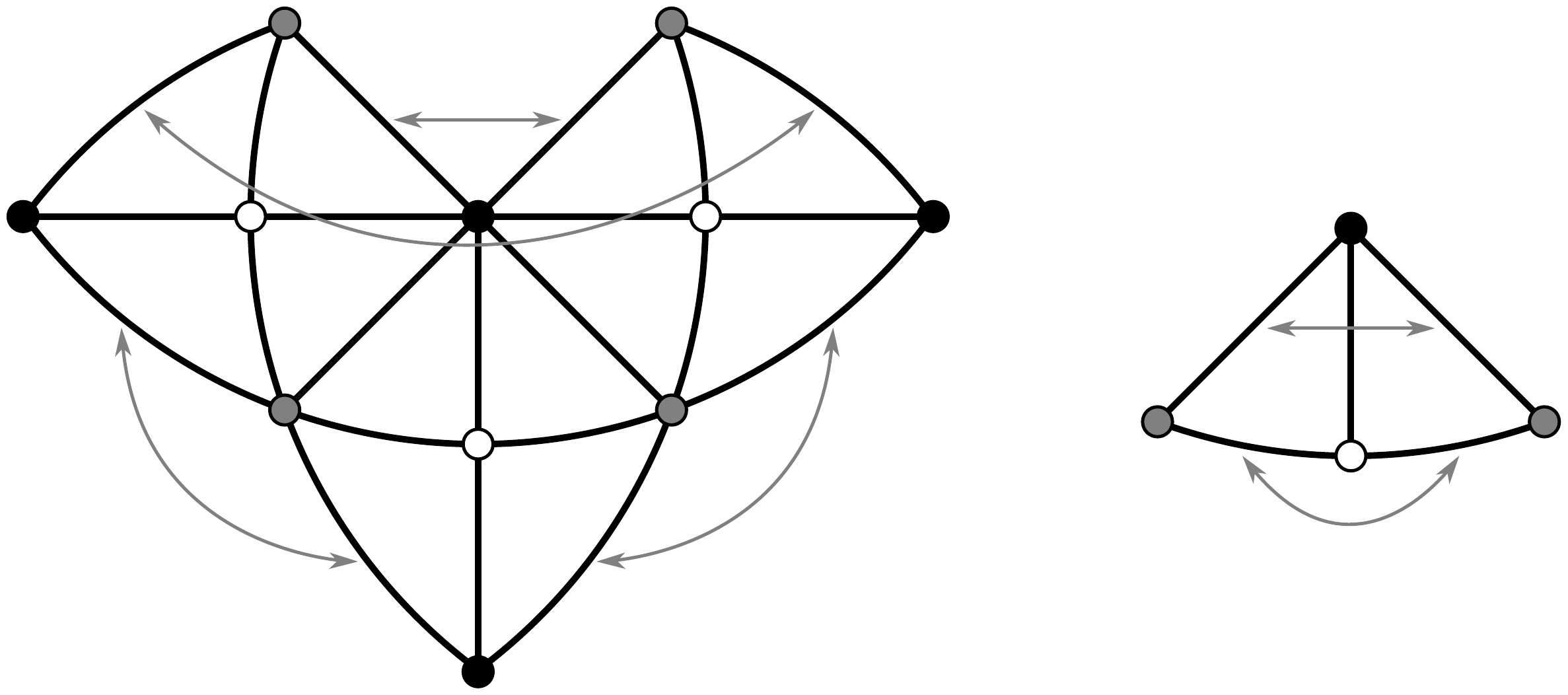}
		\caption{On the left is the unique nonnegatively curved integral link with five conical points. Grey arrows denote edge identifications. It is \emph{not} the double of a spherical polygon. It is constructed out of 12 copies of a spherical triangle, whose double is \(\S^2/S_4\), shown on the right. This demonstrates the ramified covering map. The black, grey, and white vertices of the triangle have interior angles \(\pi/4\), \(\pi/3\), and \(\pi/2\) respectively.}
		\label{fig:5ConePoints}
	\end{figure}

	The above proof gives some useful examples of the area-multiplicity calculations that allow us to exclude certain tuples of conical angles from appearing in ramified covers of \(\S^2/S_4\). We now explain the procedure for finding all the ramified covers of \(\S^2/S_4\) that are spheres with at least 3 conical points and with conical angles at most \(2\pi\).
	
	\begin{proc}\label{proc:findingRamifiedCovers}\hfill
	\begin{enumerate}
		\item For each \(n\in\{3,4,5\}\), we use the Gauss--Bonnet formula and our knowledge of the possible conical angles from \cref{res:integralConicalAngles} to give a finite list of possibilities for the (unordered) tuple \((\alpha_1,\ldots,\alpha_n)\) of conical angles. Explicitly, we solve
		\begin{equation}\label{eq:positivity}
				\sum_{i=1}^{n}(2\pi-\alpha_i)<4\pi,
		\end{equation}
		for \(\alpha_1,\ldots,\alpha_n\in\{\pi/2,2\pi/3,\pi,4\pi/3,3\pi/2\}\).
		\item For any sphere \(S\) with conical points of angle less than \(2\pi\), we can consider the \emph{Dirichlet domain} (for which, see \cite[§~3.6]{cooper2000three}) based at the point with smallest conical angle \(\alpha_{\rm{min}}\). This has the same area as \(S\) and, by \cite[Prop.~3.14]{cooper2000three}, embeds into the spherical football of angle \(\alpha_{\rm{min}}\), which has area \(2\alpha_{\rm{min}}\). Therefore, we exclude tuples of angles that do not satisfy
		\begin{equation}\label{eq:lunes}
			\text{Area} = 4\pi-\sum_{i=1}^{n}(2\pi-\alpha_i)<2\alpha_\rm{min}.
		\end{equation} 
		\item For each remaining tuple \((\alpha_1,\ldots,\alpha_n)\), we look for the possible multiplicity data that could give rise to these angles. Several tuples can be ruled out at this point by area-multiplicity calculations. Explicitly, for a multiplicity datum \(m_j^1,\ldots,m_j^{k_j}\), \(j\in\{1,2,3\}\), we check that
		\begin{equation}\label{eq:degArea}
			\text{Degree} = \sum_{l=1}^{k_j}m_j^l = \frac{1}{\pi/6}\left(4\pi-\sum_{i=1}^{n}(2\pi-\alpha_i)\right) = \text{Area}/\rm{Area}(\S^2/S_4),
		\end{equation}
		for all \(j\in\{1,2,3\}\).
		\item For each remaining multiplicity datum, we look for the possible \emph{dessins d'enfants} \(\varphi^{-1}([y_1y_2])\subseteq S^2\), where \([y_1y_2]\) is the shortest path joining the conical points of angle \(\pi\) and \(2\pi/3\) in \(\S^2/S_4\) and \(\varphi\) is the candidate ramified covering map. Either we construct all possible \emph{dessins} for this datum, in which case we can construct all possible ramified covers with this datum; or we demonstrate that no such \emph{dessin} can exist, in which case this multiplicity datum does not correspond to a genuine ramified cover.
		\item Once all possible ramified covers are listed, we determine whether any of them are isometric. This is only possible when the tuples of conical angles are the same. Moreover, when \(n=3\), isometry type is determined by angles.
	\end{enumerate}
	\end{proc}

	To better illustrate these steps, some examples of the above procedure are given in the \hyperref[exmp:findingRamifiedCovers]{example} in the \hyperref[sec:appendix]{appendix}. We summarise the results of this procedure in the following result. The precise geometries of each possible sphere are too extensive to give here.

	\begin{prop}\label{res:linksWithMonodromyInS4}
		Up to isometry, there are 30 spheres with at least three conical points and with conical angles at most \(2\pi\) that are ramified covers of \(\S^2/S_4\). There are 17 with three conical points, 12 with four conical points, and precisely one with five conical points. With the exception of one pair of nonisometric spheres having conical angles \((\pi,\pi,4\pi/3,4\pi/3)\), these spheres may be distinguished purely by their conical angles.
	\end{prop}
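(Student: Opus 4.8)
The proof is a systematic execution of \cref{proc:findingRamifiedCovers}, carried out separately for $n\in\{3,4,5\}$ --- the only cases that can occur, by \cref{res:conePointsBoundForMonodromyS4} --- after which the resulting lists are collated. I would begin with Steps 1 and 2. For each $n$, I enumerate the unordered tuples $(\alpha_1,\dots,\alpha_n)$ with $\alpha_i\in\{\pi/2,2\pi/3,\pi,4\pi/3,3\pi/2\}$ obeying the positivity inequality \cref{eq:positivity}; writing $\beta_i=2\pi-\alpha_i$, this is the finite task of listing the size-$n$ multisets drawn from $\{\pi/2,2\pi/3,\pi,4\pi/3,3\pi/2\}$ with $\sum_i\beta_i<4\pi$. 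I then strike out every tuple that fails the football bound \cref{eq:lunes}, that is, with $\mathrm{Area}=4\pi-\sum_i\beta_i\ge 2\alpha_{\min}$, leaving a short explicit shortlist for each $n$.

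Next is Step 3. For a surviving tuple, the Gauss--Bonnet formula gives the area of any realising sphere $S$, hence the putative degree $d=\mathrm{Area}/(\pi/6)$ of a ramified cover $\varphi\colon S\surj\S^2/S_4$. Every conical point of $S$ lies over one of $y_1,y_2,y_3$ (of angles $\pi$, $2\pi/3$, $\pi/2$) and has conical angle equal to its multiplicity times the angle of its image; since conical angles are at most $2\pi$, multiplicities over $y_1$, $y_2$, $y_3$ are at most $2$, $3$, $4$ respectively. By \cref{eq:degMult}, for each $j\in\{1,2,3\}$ the complete list of multiplicities over $y_j$ (those forced to exceed $1$ by the prescribed cone points, together with some number of multiplicity-$1$ preimages) must sum to $d$. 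This confines the multiplicity data to a finite list for each tuple, and comparing the two expressions for $d$ in \cref{eq:degArea} discards many tuples outright --- exactly the style of computation used in the proof of \cref{res:conePointsBoundForMonodromyS4}, typically because $d$ is too small to accommodate the required number of non-regular preimages of some $y_j$.

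Step 4 is the crux, and I expect it to be the main obstacle. For each surviving multiplicity datum I must find all \emph{dessins d'enfants} $\varphi^{-1}([y_1y_2])\subseteq S^2$: connected bipartite graphs with exactly $d$ edges whose black vertices (over $y_1$) have degree at most $2$ and white vertices (over $y_2$) degree at most $3$, with these degrees realising the prescribed multiplicity lists over $y_1$ and $y_2$, and whose faces have even degree at most $8$, equal to twice the prescribed multiplicities over $y_3$. From each such graph one reconstructs a bona fide ramified cover by gluing $2d$ copies of the spherical triangle whose double is $\S^2/S_4$ according to the combinatorics of the graph; if no such graph exists, the datum is eliminated. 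Breaking this enumeration up by degree keeps it finite, but the delicate point is that a single datum can support several non-isomorphic dessins, producing non-isometric spheres with the same conical angles; tracking this carefully is what yields the lone pair with angles $(\pi,\pi,4\pi/3,4\pi/3)$. For $n=5$ one verifies, as foreshadowed by the proof of \cref{res:conePointsBoundForMonodromyS4} and \cref{fig:5ConePoints}, that a single multiplicity datum survives and admits a unique dessin (a hexagon), so there is exactly one such sphere.

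Finally, Step 5 merges the lists and deletes isometric duplicates. These can only occur among spheres with a common angle tuple, and --- as recorded in \cref{proc:findingRamifiedCovers} --- when $n=3$ the isometry type is already pinned down by the angles, so no duplication arises there. What remains is $17$ spheres with three conical points, $12$ with four (the single repeated tuple being $(\pi,\pi,4\pi/3,4\pi/3)$), and $1$ with five, giving $17+12+1=30$ in all, with the conical angles a complete isometry invariant apart from that one pair.
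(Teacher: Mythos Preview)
Your proposal is correct and follows essentially the same approach as the paper: the paper does not give a self-contained proof of this proposition but simply declares it to be the outcome of carrying out \cref{proc:findingRamifiedCovers}, with worked examples and the complete list deferred to the appendix. Your write-up is in fact a more explicit account of that same procedure, and your observations (the multiplicity caps $2,3,4$ over $y_1,y_2,y_3$, the hexagonal dessin for the unique $n=5$ sphere, and the single ambiguous tuple $(\pi,\pi,4\pi/3,4\pi/3)$) all agree with what the paper records.
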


	\begin{rem}
	It is important at this point to note that there are three spheres in common between those listed in \cref{res:linksWithMonodromyInD6,res:linksWithMonodromyInS4}. They are the spheres \(\S^2(2\pi/3,\pi,\pi)\), \(\S^2(\pi,\pi,\pi)\) and \(\S^2(4\pi/3,\pi,\pi)\).
	\end{rem}

	By combining the main results of this section, namely \cref{res:ramifiedCoverForLinks,res:linksWithMonodromyInD6,res:linksWithMonodromyInS4}, we can now fully classify the singular vertices of nonnegatively curved integral polyhedral 3-manifolds. The following result is a summary of the classification. It is an expanded form of \cref{res:classificationOfSingularVertsShort}. A complete geometric description of the links is given in \cref{tab:intVerts} in the \hyperref[sec:appendix]{appendix}.

	\begin{thm}\label{res:classificationOfSingularVerts}
		Let \(M\) be a nonnegatively curved integral polyhedral 3-manifold and \(x\) a singular vertex of \(M\). There are 32 possibilities for the isometry type of \(\link_xM\). Of these, five have (local) monodromy in \(D_6\), 30 in \(S_4\), and three in both. Those with monodromy in \(D_6\) all have three conical points. Of those with monodromy in \(S_4\), 17 have three conical points, 12 have four conical points, and one has five conical points.
	\end{thm}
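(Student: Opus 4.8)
The plan is to obtain the theorem by combining \cref{res:ramifiedCoverForLinks,res:linksWithMonodromyInD6,res:linksWithMonodromyInS4} with a short realisation argument and an inclusion--exclusion count. First I would establish the ``at most $32$'' half. If $x$ is a singular vertex of $M$, then \cref{res:ramifiedCoverForLinks} says $\link_xM$ is a ramified cover of $\S^2/S_4$ or of $\S^2/D_6$. Being the link of a codimension $3$ singularity, $\link_xM$ is a sphere with at least three conical points; and since its cone angles are exactly the conical angles of the singular edges of $M$ ending at $x$, nonnegative curvature confines all of them to $(0,2\pi)$. Thus \cref{res:linksWithMonodromyInD6} pins $\link_xM$ down to one of five spheres when the cover is over $\S^2/D_6$, and \cref{res:linksWithMonodromyInS4} to one of thirty when it is over $\S^2/S_4$, while the remark following \cref{res:linksWithMonodromyInS4} records that precisely three spheres---$\S^2(2\pi/3,\pi,\pi)$, $\S^2(\pi,\pi,\pi)$, and $\S^2(4\pi/3,\pi,\pi)$---lie on both lists. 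Hence there are at most $5+30-3=32$ isometry types, and the tallies by number of conical points (all five $D_6$-links have three; of the thirty $S_4$-links, $17$ have three, $12$ have four, and one has five) are read straight off those two propositions.

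For the reverse inequality I would check that every one of these $32$ spheres $S$ genuinely occurs as a link. Take the Euclidean metric cone $C(S)$ over $S$: coning a locally finite triangulation of $S$ and then subdividing each infinite cone over a triangle into Euclidean tetrahedra presents $C(S)$ as a complete geodesic polyhedral $3$-manifold, homeomorphic to $\R^3$, in the sense of \cref{defn:polyhedralManifold}. Its singular locus is the union of the rays over the conical points of $S$, along each of which the conical angle equals the corresponding (sub-$2\pi$) cone angle of $S$, so $C(S)$ is nonnegatively curved; and since $S$ has at least three conical points it is not a spherical football, so the tip of $C(S)$ is a genuine singular vertex, with link isometric to $S$. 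Finally, $C(S)$ is integral: its monodromy group is the holonomy group $\Hol(S)$, which is conjugate into $G=S_4$ or $D_6$ by the same reasoning as in the proof of \cref{res:ramifiedCoverForLinks}---away from the preimages of the conical points of $\S^2/G$ the ramified covering map is a genuine covering, so the developing map of $S$ takes values in $\S^2$ with holonomy in the deck group $G$ of $\S^2\surj\S^2/G$, and since that covering locus is just $\reg S$ with finitely many further points removed, the holonomy of all of $\reg S$ still lies in $G$. As each such $G$ preserves a lattice, so does $\Hol(S)$, as required. Putting the two halves together yields exactly $32$ isometry types, with the stated split by monodromy group and by number of conical points.

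The argument is mostly bookkeeping once \cref{res:linksWithMonodromyInD6,res:linksWithMonodromyInS4} are in hand. The one place that warrants genuine (if routine) care is the realisation step: one must check that the metric cone $C(S)$ really satisfies \cref{defn:polyhedralManifold}---in particular that it admits a locally finite Euclidean triangulation---and that the holonomy of a ramified cover of $\S^2/G$ lies, up to conjugacy, inside $G$, the latter being essentially already contained in the proof of \cref{res:ramifiedCoverForLinks}.
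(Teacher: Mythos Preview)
Your argument is correct and mirrors the paper's: the theorem is presented there simply as the summary obtained by combining \cref{res:ramifiedCoverForLinks,res:linksWithMonodromyInD6,res:linksWithMonodromyInS4} with the three-sphere overlap noted just before it, exactly the inclusion--exclusion you carry out. Your realisation step via the metric cone \(C(S)\) goes beyond what the paper writes out---the paper only asserts the equivalence in \cref{res:linksAreRamifiedCovers} without justifying the reverse direction---so your version is, if anything, slightly more complete on that point.
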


	\begin{rem}\label{res:classificationOfLinks}
		Given that there are seven possible conical angles in a nonnegatively curved integral polyhedral 3-manifold and that the link of a regular point is \(\S^2\), \cref{res:classificationOfSingularVerts} implies that there are 40 possibilities in total for the local isometry type of a point in a nonnegatively curved integral polyhedral 3-manifold.
	\end{rem}

	We finish with an immediate corollary of this classification, which is useful when studying the global properties of the singular locus.

	\begin{cor}\label{res:linkDiameterBoundedAwayFromPi}
		There is \(\varepsilon_0\in(0,\pi)\) such that, for any singular vertex \(x\) in a nonnegatively curved integral polyhedral 3-manifold \(M\), the diameter of \(\link_xM\) is at most \(\varepsilon_0\); i.e., \(x\) is \(\varepsilon_0\)-narrow.
	\end{cor}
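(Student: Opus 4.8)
The plan is to exploit the finiteness established in \cref{res:classificationOfSingularVerts}. Recall first that the angle between two paths issuing from $x$ is by definition the distance in $\link_x M$ between the corresponding directions, so that the inequality $\diam \link_x M \le \varepsilon$ is precisely the assertion that $x$ is $\varepsilon$-narrow; the corollary thus amounts to producing a single $\varepsilon_0 \in (0,\pi)$ dominating all the diameters $\diam \link_x M$. Since, by \cref{res:classificationOfSingularVerts}, there are only $32$ possibilities for the isometry type of $\link_x M$, it suffices to show that each of these $32$ links has diameter \emph{strictly} less than $\pi$: one may then take $\varepsilon_0$ to be the maximum of these finitely many diameters, which is positive (each link is a sphere with at least three conical points, hence not a single point) and, being the maximum of finitely many numbers each $< \pi$, is itself $< \pi$.

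It remains to bound the diameter of an individual link $S = \link_x M$. Each such $S$ is a topological $2$-sphere carrying a metric that is locally isometric to the round $\S^2$ away from its conical points, all of which have angle strictly less than $2\pi$ by \cref{res:integralConicalAngles}; consequently $S$ is a $2$-dimensional Alexandrov space of curvature $\ge 1$. By the Bonnet--Myers theorem for such spaces one has $\diam S \le \pi$. To upgrade this to a strict inequality, suppose $\diam S = \pi$. The maximal-diameter rigidity theorem for Alexandrov spaces of curvature $\ge 1$ then forces $S$ to be isometric to the spherical suspension of a $1$-dimensional Alexandrov space of curvature $\ge 1$, that is, of a circle of length $\le 2\pi$ or of a closed interval of length $\le \pi$. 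But the suspension of an interval is a topological disc, and the suspension of a circle of length $\ell$ is either the round sphere (when $\ell = 2\pi$) or a spherical football with exactly two conical points (when $\ell < 2\pi$); none of these is a sphere with at least three conical points. Since every link of a singular vertex does have at least three conical points (by \cref{res:linksAreRamifiedCovers}), this is a contradiction, and hence $\diam S < \pi$, which completes the argument.

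The step I expect to be the main obstacle is this final strictness: deducing $\diam S < \pi$ rather than merely $\diam S \le \pi$. The argument above routes it through maximal-diameter rigidity in Alexandrov geometry; alternatively one could bypass that machinery entirely by reading the $32$ explicit links off \cref{tab:intVerts} in the appendix and checking directly that none of them is the round sphere or a spherical football, which is immediate once one knows each has at least three conical points. Either way, it is the finiteness coming from \cref{res:classificationOfSingularVerts} that does the essential work.
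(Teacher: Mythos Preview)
Your argument is correct and follows the same two-step skeleton as the paper: (i) there are only finitely many isometry types of links, by \cref{res:classificationOfSingularVerts}; (ii) each such link, being a sphere with at least three conical points and cone angles $<2\pi$, has diameter strictly less than $\pi$; so the maximum of these finitely many diameters gives the required $\varepsilon_0$.

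The only point of divergence is in step (ii). The paper dispatches the strict inequality in one line by citing \cite[Lem.~3.10]{cooper2000three}, which says directly that any sphere with at least three conical points of angle $\le 2\pi$ has diameter $<\pi$. You instead reprove this fact from scratch via Alexandrov geometry: Bonnet--Myers gives $\diam S\le\pi$, and the maximal-diameter rigidity theorem forces the equality case to be a spherical suspension, hence a round sphere, a football, or a disc---none of which has three or more conical points. Your route is self-contained and perhaps more illuminating as to \emph{why} the bound is strict, at the cost of invoking a nontrivial rigidity theorem; the paper's route is shorter but outsources the work. Either way, as you correctly observe, it is the finiteness from \cref{res:classificationOfSingularVerts} that carries the real content of the corollary.
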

	\begin{proof}
	There are only finitely many possibilities for \(\link_xM\), as just shown, and by \cite[Lem.~3.10]{cooper2000three} any sphere with at least three conical points and with conical angles at most \(2\pi\) has diameter strictly less than \(\pi\).
	\end{proof}


\section{Proof of the Singular Vertex Bound} 
	\label{sec:proof}

	\noindent\Cref{sec:class} dealt with the \emph{local} geometry of singular vertices of nonnegatively curved integral polyhedral 3-manifolds. In this section, we answer a \emph{global} question about singular vertices by proving the following result.
	\singVertBnd*

	As was mentioned in the introduction, the property of nonnegatively curved integral polyhedral 3-manifolds that is used to prove this is the fact that there is \(\varepsilon_0\in(0,\pi)\) such that all singular vertices are \(\varepsilon_0\)-narrow. The proof relies on techniques from Alexandrov geometry---in fact, the majority of what follows applies to any finite dimensional nonnegatively curved Alexandrov space. What we actually prove is the following.

	\begin{prop}\label{res:epsVertBndAlexandrov}
		For all \(n\in\N\) and \(\epsilon\in(0,\pi)\), there is a bound \(B(n,\epsilon)\in\N\) such that, in any (complete, connected) \(n\)-dimensional Alexandrov space of nonnegative curvature, the number of \(\varepsilon\)-narrow points is less than \(B(n,\epsilon)\).
	\end{prop}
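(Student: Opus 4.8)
The plan is to turn the infinitesimal hypothesis of narrowness into a rigid scale-invariant estimate using \emph{Toponogov's theorem}, and then run a volume--packing argument based on the \emph{Bishop--Gromov inequality}. Recall that a point $x$ of an $n$-dimensional nonnegatively curved Alexandrov space $X$ is $\varepsilon$-narrow exactly when its space of directions $\Sigma_x$ has diameter at most $\varepsilon$. The Toponogov input is this: if $x$ is $\varepsilon$-narrow then, for any two points $y,z$, the hinge at $x$ has angle at most $\varepsilon$, so the hinge comparison in nonnegative curvature gives $|yz|^2\le|xy|^2+|xz|^2-2|xy|\,|xz|\cos\varepsilon$. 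Two consequences will be used. First, letting $y,z$ range over $\overline B(x,r)$ shows $\diam\overline B(x,r)\le 2r\sin(\varepsilon/2)$ for \emph{every} $r>0$, so a narrow point is uniformly thin at all scales and the space near it is uniformly far from splitting off a line. Second, if $x$ and $y$ are both $\varepsilon$-narrow then, for any third point $z$, the Euclidean comparison triangle on $\{x,y,z\}$ has angles at most $\varepsilon$ at $x$ and at $y$; since these angles sum to $\pi$, its angle at $z$ is at least $\pi-2\varepsilon$, and Toponogov upgrades this to $\angle_z(x,y)\ge\pi-2\varepsilon$.

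When $\varepsilon<\pi/2$ this already finishes the proof. Fix a narrow point $z$ (if there is none, we are done). By the second consequence, the directions at $z$ towards the remaining narrow points form a $(\pi-2\varepsilon)$-separated subset of $\Sigma_z$, which is an Alexandrov space of dimension at most $n-1$, curvature $\ge 1$, and diameter $\le\pi$. The balls of radius $\tfrac{1}{2}(\pi-2\varepsilon)$ about these directions are disjoint, so Bishop--Gromov relative volume comparison on $\Sigma_z$ bounds their number by a constant $P(n,\varepsilon)$; hence $X$ has fewer than $2+P(n,\varepsilon)$ narrow points. (For $\varepsilon<\pi/3$ three narrow points would even force a planar triangle with all angles at most $\varepsilon$, which is impossible, so at most two occur.)

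The genuinely hard range is $\varepsilon\in[\pi/3,\pi)$, where narrowness forces no angular separation and narrow points may occur at arbitrarily different scales --- already in dimension $2$ one finds three cone points whose pairwise distance ratios tend to infinity as $\varepsilon\to\pi$ --- so no single packing radius suffices. Here Bishop--Gromov is applied to $X$ itself. Applying it to $\Sigma_x$ (curvature $\ge 1$, diameter $\le\varepsilon<\pi$) bounds the $(n-1)$-volume of $\Sigma_x$ strictly below that of $\mathbb S^{n-1}$, which with the monotonicity of $r\mapsto\Vol\overline B(x,r)/r^n$ produces a uniform volume deficit $\Vol\overline B(x,r)\le(1-\eta)\,\omega_n r^n$ for all $r>0$ at every narrow point, where $\omega_n$ is the volume of the Euclidean unit $n$-ball and $\eta=\eta(n,\varepsilon)>0$. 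One then runs a Cheeger--Naber-style multiscale covering: cover a large ball of $X$ by a dyadic family of sub-balls, refine only around the scales at which a narrow point sits, and charge the volume lost in each refined annulus to the narrow point responsible for it; the monotonicity of the Bishop--Gromov ratio, used from that scale outwards towards the asymptotic cone, caps the total lost volume and hence the number of charges. I expect the main obstacle to be exactly this bookkeeping --- attaching to each narrow point a well-defined scale, keeping distinct charges essentially disjoint, and making the argument survive noncompactness and possible collapsing --- and it is the scale-invariant thinness $\diam\overline B(x,r)\le 2r\sin(\varepsilon/2)$ that should let the refinement terminate with an effective bound $B(n,\varepsilon)$.
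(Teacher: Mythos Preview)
Your argument for \(\varepsilon<\pi/2\) is correct and pleasantly direct: once you observe that two narrow points \(x,y\) force \(\angle_z(x,y)\ge\pi-2\varepsilon\) at \emph{any} third point \(z\), fixing \(z\) narrow and packing in \(\Sigma_z\) finishes immediately. This is different from the paper's route and is worth keeping. (One small slip: the diameter estimate \(\diam\overline B(x,r)\le 2r\sin(\varepsilon/2)\) fails for \(\varepsilon<\pi/3\), since \(x\) itself lies in the ball and there are points at distance \(r\); the correct bound is \(\max\{r,2r\sin(\varepsilon/2)\}\). This does not affect the packing argument.)

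The gap is in the range \(\varepsilon\ge\pi/2\), which is precisely the range that matters for the application (the paper's \(\varepsilon_0\) is at least \(5\pi/6\)). There your proposal is not a proof but an outline of the Cheeger--Naber/Li--Naber quantitative stratification machinery, with the key step---assigning a scale to each narrow point and making the volume-deficit charges essentially disjoint---left as ``bookkeeping''. That bookkeeping \emph{is} the content; without it you are essentially citing Li--Naber, which the paper explicitly sets out to avoid. The monotonicity of the Bishop--Gromov ratio alone does not give disjointness of the deficits: two narrow points sitting at comparable distance from each other can have overlapping ``bad'' annuli, and it is exactly the mechanism for separating these contributions that requires work.

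The paper sidesteps this entirely with an elementary argument that works uniformly for all \(\varepsilon\in(0,\pi)\). Given any large finite set \(S\), an iterated covering/pigeonhole step (using only the Bishop--Gromov packing bound in \(M\)) extracts a sequence \(x_1,\dots,x_m\in S\) with \(d(x_{i+1},x_{i+2})\le\alpha\,d(x_i,x_{i+1})\) for \(\alpha=1/3\). Projecting \(x_1,\dots,x_{m-1}\) into \(\Sigma_{x_m}\) and applying a sphere-packing bound there (via Grove--Wilhelm comparison with \(\S^{n-1}\)) yields indices \(i<j<m\) with \(\angle x_ix_mx_j\le(\pi-\varepsilon)/2\). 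The geometric decay forces \(|x_jx_m|<|x_ix_j|\), so in the Euclidean comparison triangle the angle at \(\tilde x_j\) exceeds \(\varepsilon\); Toponogov transfers this to \(\angle x_ix_jx_m>\varepsilon\). Thus any set of \(\varepsilon\)-narrow points large enough to run this extraction contains a vertex with an angle exceeding \(\varepsilon\), a contradiction. No multiscale charging is needed: the shrinking sequence \emph{is} the scale-selection mechanism, and it is produced by pure pigeonhole.
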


	With this result established, we use \cref{res:linkDiameterBoundedAwayFromPi} and the fact, following from \cite[Ex.~2.9~(6)]{MR1185284}, that any nonnegatively curved polyhedral manifold is an Alexandrov space of nonnegative curvature to deduce \cref{res:singVertBnd}.

	\subsection{Connection to Li and Naber} It was mentioned in the introduction that \cref{res:epsVertBndAlexandrov} follows from a recent result of Li and Naber, \cite[Cor.~1.4]{MR4171913}, although this was not known to the author until after the first version of this paper had been written. Let us briefly examine their result, particularly why it implies ours, before summarising the independent proof given in this section. The central concept in \cite{MR4171913} is \emph{quantitative splitting}---a measure of how close a neighbourhood of a point is to being a metric product. We give a summary of the definitions relevant to our setting.

	\begin{defn}[c.f.~{\cite[Defs.~1.1--1.2]{MR4171913}}]\label{def:quantSplitting}
		Let \(M\) be a metric space, \(x\in M\), \(r>0\), \(k\in\N_0\), and \(\delta>0\). We say that \(B_r(x)\) is \emph{\((k,\delta)\)-splitting} if there is a metric space \(Z\) and a point \(p\in\R^k\times Z\) such that \(d_{\mathrm{GH}}(B_r(x),B_r(p))\leq\delta r\), where \(d_\mathrm{GH}\) is the Gromov--Hausdorff distance. We then define the \emph{\((k,\delta)\)-singular set} to be
		\[
			\oldcal S^k_\delta = \oldcal S^k_\delta(M) \defeq \{x\in M \mid \forall r>0, B_r(x)\text{ is not }(k+1,\delta)\text{-splitting}\}.
		\]
	\end{defn}

	These definitions build on the established concept of singular sets. The \emph{singular set} \(\oldcal S(M)\) of an \(m\)-dimensional Alexandrov space \(M\) of curvature bounded below is defined as the set of points whose tangent cones are not isometric to \(\R^m\). It is well known that this singular set has a natural stratification \(\oldcal S(M)=\oldcal S^{m-1}\supseteq\ldots\supseteq\oldcal S^0\), where \(\oldcal S^k\) is the set of points whose tangent cones are not isometric to \(\R^{k+1}\times C\), for any metric cone \(C\). (If \(M\) is a polyhedral manifold, then \(\oldcal S^0\) is the set of singular vertices and \(\oldcal S^1\) is the union of the singular edges.) One can think of \(\oldcal S^k\) as the set of points where \(M\) does not locally look like \(\R^{k+1}\times C\), for any metric cone \(C\). The idea of quantitative splitting strengthens this notion, so that \(\oldcal S^k_\delta\) is the set of points where \(M\) is, in a rigorous sense, \(\delta\) far away from locally looking like \(\R^{k+1}\times C\). As one would expect, we have \(\oldcal S^k=\bigcup_{\delta>0}\oldcal S^k_\delta\).

	The \(k\)-singular set \(\oldcal S^k\) has Hausdorff dimension at most \(k\) and the same is true for \(\oldcal S^k_\delta\). In particular, \(\oldcal S^0_\delta\) is a discrete set of points. Li and Naber's result \cite[Cor.~1.4]{MR4171913} gives a bound on the \(k\)-dimensional Hausdorff measure of \(\oldcal S^k_\delta\cap B_1(p)\) in any Alexandrov space \(M\) of curvature \(\geq-1\) and for any \(p\in M\), depending only on \(\delta\) and the dimension of \(M\). When \(k=0\), this implies a bound on the number of points in \(\oldcal S^0_\delta\cap B_1(p)\). \Cref{res:epsVertBndAlexandrov} concerns \(\varepsilon\)-narrow points, which always belong to \(\oldcal S^0\), in \emph{nonnegatively curved} Alexandrov spaces, so we now restrict to the case of \(k=0\) and curvature \(\geq0\).

	We first remove the dependence on \(B_1(p)\) in the bounds given above by rescaling the metric. For any positive scale \(\lambda\), if \(M\) has curvature \(\geq0\), then so does \(\lambda M\). We also have \(\oldcal S^0_\delta(\lambda M)=\lambda\oldcal S^0_\delta(M)\). Any finite subset of \(\oldcal S^0_\delta\) can thus be rescaled to lie within \(B_1(p)\) for some \(p\in M\). This means that, when \(M\) has curvature \(\geq0\), \cite[Cor.~1.4]{MR4171913} gives a bound on the size of any finite subset of \(\oldcal S^0_\delta\) depending only on \(\delta\) and the dimension of \(M\). This same bound then immediately applies to \(|\oldcal S^0_\delta|\) itself. Therefore, showing that \cite[Cor.~1.4]{MR4171913} implies \cref{res:epsVertBndAlexandrov} reduces to showing that, for any \(\varepsilon\in(0,\pi)\), there is \(\delta=\delta(\varepsilon)>0\) such that any \(\varepsilon\)-narrow point \(x\in M\) belongs to \(\oldcal S^0_\delta\). This is fairly simple: one can use Toponogov's theorem to show that, for any \(r>0\), \(\diam B_r(x)\leq\varphi r\), where \(\varphi\) depends only on \(\varepsilon\); and then recall from \cite[Ex.~7.3.14]{burago2001course} that \(d_\mathrm{GH}(Y,Y')\geq\frac{1}{2}|\diam Y-\diam Y'|\), for bounded metric spaces \(Y\) and \(Y'\).

	Li and Naber's result therefore \emph{does} imply \cref{res:epsVertBndAlexandrov}, but nowhere near the full force of their result is needed in our specialised setting. We now present our independent proof, which is (naturally) considerably shorter than the proof of \cite[Thm.~1.3]{MR4171913}, from which \cite[Cor.~1.4]{MR4171913} follows.

	\subsection{The Proof} The skeletal logic of the proof of \cref{res:epsVertBndAlexandrov} is as follows. Recall that, for \(\varepsilon\in(0,\pi)\), a point \(x\) in an \(n\)-dimensional nonnegatively curved Alexandrov space \(M\) is called \emph{\(\varepsilon\)-narrow} if any angle subtended at \(x\) is at most \(\varepsilon\). This is equivalent to the space of directions at \(x\) having diameter at most \(\varepsilon\). We will show that any sufficiently large subset \(S\subseteq M\) always contains three distinct points forming a triangle with at least one angle greater than \(\varepsilon\). `Sufficiently large' here depends only on \(n\) and \(\varepsilon\). Therefore, if we take \(S\) to be the set of all \(\varepsilon\)-narrow points in \(M\), \(S\) cannot be `sufficiently large'---i.e., it must be smaller than a certain bound depending only on \(n\) and \(\varepsilon\).

	In order to demonstrate that any sufficiently large set \(S\) admits such a triangle, we first show that \(S\) contains a sufficiently long sequence of points \(x_1,\ldots,x_m\) whose consecutive distances decay like a geometric progression. We then take \(m\) large enough to ensure that, for some \(i<j<m\), the angle \(\angle x_ix_mx_j\) is less than \((\pi-\varepsilon)/2\). We then use Toponogov's theorem (for which, see \cite[Thm.~10.3.1]{burago2001course}) to compare \(\triangle x_ix_jx_m\) with a Euclidean triangle. The geometric decay of the sequence ensures that the side ratio \(|x_jx_m|/|x_ix_j|\) is small, which allows us to deduce that \(\angle x_ix_jx_m>\varepsilon\).

	The aforementioned sequence \(x_1,\ldots,x_m\in S\) is constructed by an iterative process, which relies on a certain series of nets for subsets of \(S\). To determine how large \(S\) needs to be to continue this process, we begin by bounding the cardinality of these nets.

	\begin{lem}\label{res:covering}
		Let \(M\) be an \(n\)-dimensional nonnegatively curved Alexandrov space, \({\alpha\in(0,1)}\), and \(X\) a finite subset of \(M\) with diameter at most \(d\). There is a covering of \(X\) by closed balls of radius \(\alpha d/4\) with at most \((1+8/\alpha)^n\) elements.
	\end{lem}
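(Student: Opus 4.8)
The plan is to deduce the covering bound from the Bishop–Gromov volume comparison inequality, which is available since $M$ is a nonnegatively curved Alexandrov space of finite dimension $n$ (and hence carries a natural $n$-dimensional Hausdorff measure with the usual volume doubling behaviour in nonnegative curvature). First I would reduce the statement about the finite set $X$ to a statement about its ambient metric space: enlarge $X$ slightly if desired, but really one only needs that $X$ has diameter $\le d$, so fix a point $p\in X$ and note $X\subseteq \overline{B}_d(p)$. Pick a maximal $\alpha d/4$-separated subset $\{y_1,\dots,y_N\}$ of $X$, meaning the points are pairwise at distance $>\alpha d/4$ and the set is maximal with this property. Maximality immediately gives that the closed balls $\overline{B}_{\alpha d/4}(y_i)$ cover $X$, so it suffices to bound $N$.

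Next I would bound $N$ by a packing argument. The balls $B_{\alpha d/8}(y_i)$ are pairwise disjoint (two points within such balls would be at distance $<\alpha d/4$), and each is contained in $B_{d+\alpha d/8}(y_1)$ — or more simply in $B_{2d}(y_1)$, say, using $\diam X\le d$ and $\alpha<1$. By Bishop–Gromov applied at the centre $y_1$, the ratio $\Vol B_r(y_1)/r^n$ is nonincreasing in $r$; comparing the (disjoint) small balls to the big ball and then comparing each small ball at $y_i$ against the same $y_i$-centred ball of radius $2d$ gives $N\cdot \Vol B_{\alpha d/8}(y_i) \le \Vol B_{2d}(\text{containing ball})$, and Bishop–Gromov converts this into $N \le (2d / (\alpha d/8))^n = (16/\alpha)^n$. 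To get the sharper constant $(1+8/\alpha)^n$ claimed, I would instead run the packing more carefully: use disjoint balls of radius $\alpha d/8$ around a maximal $\alpha d/4$-net, all contained in the ball of radius $d + \alpha d/8$ about one of the net points, so that $N \le \big((d+\alpha d/8)/(\alpha d/8)\big)^n = (1 + 8/\alpha)^n$. This is exactly the standard volume-comparison packing bound, and it is clean because the $d$'s cancel.

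The main obstacle is making the Bishop–Gromov step rigorous in the Alexandrov (non-Riemannian) setting: one needs the version of the Bishop–Gromov inequality for finite-dimensional Alexandrov spaces with curvature bounded below, which holds for the $n$-dimensional Hausdorff measure — I would cite the relevant statement (e.g.\ from \cite{burago2001course} or \cite{MR3930625}) rather than reprove it. A minor bookkeeping point is the choice of the enclosing ball: one wants all the small disjoint balls inside a single metric ball whose radius is a controlled multiple of $\alpha d/8$, and since $\diam X\le d$ every net point lies within $d$ of $y_1$, so radius $d+\alpha d/8$ works and produces precisely the constant $1+8/\alpha$. Finally, I would remark that the hypothesis $\alpha\in(0,1)$ is only used to keep the radii comfortably below $d$ and to ensure the covering radius $\alpha d/4$ is genuinely smaller than the diameter; the argument in fact goes through for any $\alpha>0$ with the same formula.
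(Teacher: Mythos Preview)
Your approach is essentially the same as the paper's: take a maximal \(\alpha d/4\)-separated subset of \(X\), observe that the half-radius balls are disjoint and all sit inside a ball of radius \((1+\alpha/8)d\) about any net point, and invoke Bishop--Gromov. The one step you gloss over is that Bishop--Gromov compares \emph{concentric} balls, so from \(\sum_i \Vol B_{\alpha d/8}(y_i)\le \Vol B_{(1+\alpha/8)d}(y_j)\) you cannot directly read off \(N\le(1+8/\alpha)^n\); the paper handles this by choosing \(y_j\) to minimise \(\Vol B_{\alpha d/8}(y_j)\), so that the left side is at least \(N\cdot \Vol B_{\alpha d/8}(y_j)\) and the ratio \(\Vol B_{(1+\alpha/8)d}(y_j)/\Vol B_{\alpha d/8}(y_j)\) is bounded by \((1+8/\alpha)^n\) at that single centre. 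With that clarification your argument is complete and matches the paper's.
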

	\begin{proof}
		This is adapted from a proof of Liu in \cite[§~3]{MR1068127}, although the argument is generally attributed to Gromov. Let \(\{p_1,\ldots,p_k\}\) be a maximal set of points in \(X\) satisfying \(d(p_i,p_j)>\alpha d/4\) for all \(i\neq j\). Then we have a covering of \(X\) by, say, \(k\) closed balls:
		\[
			X \subseteq \bigcup_{i=1}^{k}\overline{B}(p_i,\alpha d/4).
		\]
		By the definition of the \(p_i\), we have
		\begin{equation}\label{eq:disjoint}
			\overline{B}(p_i,\alpha d/8)\cap\overline{B}(p_j,\alpha d/8)=\emptyset,\text{ for all } i\neq j.
		\end{equation}
		Also, by applying the triangle inequality, we see that, for any \(j\in\{1,\ldots,k\}\),
		\begin{equation}\label{eq:contained}
			\bigcup_{i=1}^{k}\overline{B}(p_i,\alpha d/8) \subseteq \overline{B}(p_j,(1+\alpha/8)d).
		\end{equation}

		Now let \(\overline{B}(p_j,\alpha d/8)\) have minimal volume among the \(\overline{B}(p_i,\alpha d/8)\). Then, writing \(V(p,r)\defeq\Vol(B(p,r))\), we have
		\begin{align*}
			k &= \sum_{i=1}^{k}\frac{V(p_i,\alpha d/8)}{V(p_i,\alpha d/8)},\\
			  &\leq \frac{1}{V(p_j,\alpha d/8)}\sum_{i=1}^{k}V(p_i,\alpha d/8),\\
			  &\leq \frac{V(p_j,(1+\alpha/8)d)}{V(p_j,\alpha d/8)},\text{ by \cref{eq:disjoint,eq:contained} together,}\\
			  &\leq \frac{(1+\alpha/8)^nd^n}{(\alpha d/8)^n},\text{ by the Bishop--Gromov inequality (i.e., \cite[Thm.~10.6.6]{burago2001course}),}\\
			  &= (1+8/\alpha)^n.\qedhere
		\end{align*}
	\end{proof}

	\noindent We now describe one step of the iterative process.

	\begin{lem}\label{res:shrinkingSequenceStep}
		Let \(M\) be an \(n\)-dimensional nonnegatively curved Alexandrov space, \({\alpha\in(0,1)}\), and \(S\) a finite subset of \(M\) with \(|S|\geq2\) and \(\diam S=d\). There exist \(x\in S\) and \(S'\subseteq S\) such that \(d(x,S')\geq d/2\), \(\diam S'\leq \alpha d/2\), and \(|S'|\geq|S|/\left(2\lfloor(1+8/\alpha)^n\rfloor\right)\).
	\end{lem}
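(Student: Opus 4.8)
The plan is to pick $x\in S$ to realise (approximately) the diameter of $S$, then look at the "far half" of $S$ from $x$, and finally apply \cref{res:covering} to that far half to extract a large, small-diameter piece $S'$. Concretely, first I would choose $x,y\in S$ with $d(x,y)=\diam S=d$ (a finite set attains its diameter). Set
\[
	F \defeq \{z\in S \mid d(x,z)\geq d/2\}.
\]
Since $y\in F$, this set is nonempty, and by construction $d(x,F)\geq d/2$, giving the first required property once we take $S'\subseteq F$. The key point is a lower bound on $|F|$: every point of $S$ lies within distance $d$ of $x$, and I claim a large fraction of $S$ must lie in $F$. This is where the only slightly delicate estimate comes in — I would argue that the "near half" $S\setminus F = \{z : d(x,z)<d/2\}$ cannot be too large, because its presence together with $y$ would otherwise be fine, so actually I need a different split. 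The cleaner route is to not insist $F$ is large on its own, but instead to cover $S$ first.

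Here is the approach I would actually carry out. Apply \cref{res:covering} to $X=S$ with the given $\alpha$: this yields a covering of $S$ by at most $N\defeq\lfloor(1+8/\alpha)^n\rfloor$ closed balls of radius $\alpha d/4$, hence of diameter at most $\alpha d/2$. (We may take the ceiling/floor since the number of balls is an integer $\le(1+8/\alpha)^n$.) Now intersect each such ball $B_i$ with the far set $F=\{z\in S: d(x,z)\ge d/2\}$, where $x$ is one endpoint of a diameter-realising pair $x,y$. Since $y\in F$, we have $F\neq\emptyset$, and $F$ is covered by the $N$ sets $B_i\cap F$. By pigeonhole, some $B_i\cap F$ has at least $|F|/N$ elements; call it $S'$. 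Then $\diam S'\leq\diam B_i\leq\alpha d/2$, and $d(x,S')\geq d(x,F)\geq d/2$, so the first two properties hold. It remains to bound $|S'|\geq|F|/N$ from below by $|S|/(2N)$, i.e. to show $|F|\geq|S|/2$.

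For the inequality $|F|\ge |S|/2$: suppose not, so the near set $S^- \defeq S\setminus F=\{z\in S : d(x,z)<d/2\}$ has $|S^-|>|S|/2$, hence $|S^-|\ge 2$. But any two points $z,z'\in S^-$ satisfy $d(z,z')\le d(z,x)+d(x,z')<d$, which does not immediately contradict $\diam S=d$. So this naive split does not close; instead I would apply the same diameter trick symmetrically. Pick the diameter pair $x,y$; then \emph{every} $z\in S$ has $d(x,z)\ge d/2$ \emph{or} $d(y,z)\ge d/2$ (else $d(x,y)\le d(x,z)+d(z,y)<d$, contradiction). Hence $S=F_x\cup F_y$ where $F_x,F_y$ are the far sets from $x$ and $y$ respectively, so $\max(|F_x|,|F_y|)\ge|S|/2$. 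Relabelling, we may assume $|F_x|\ge|S|/2$, and then run the covering/pigeonhole argument above with this $x$ and $F=F_x$. This gives $|S'|\ge|F_x|/N\ge|S|/(2N)$, as required, completing all three properties. The main (and only real) obstacle is exactly this counting step — ensuring a definite fraction of $S$ lies far from a single chosen point — and the symmetric diameter-pair argument is what resolves it; everything else is pigeonhole plus the packing bound from \cref{res:covering}.
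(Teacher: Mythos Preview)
Your argument is correct and follows essentially the same approach as the paper: pick a diameter-realising pair, use the triangle inequality to show that at least half of \(S\) lies at distance \(\geq d/2\) from one endpoint, then apply \cref{res:covering} and pigeonhole to extract \(S'\). The only cosmetic differences are that the paper partitions \(S\) into disjoint halves \(P=\{s:d(s,p)\le d(s,q)\}\) and \(Q=S\setminus P\) (rather than your overlapping far sets \(F_x,F_y\)) and applies the covering lemma to the chosen half rather than to all of \(S\); neither change affects the logic or the bound.
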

	\begin{proof}
		Choose \(p\) and \(q\in S\) such that \(d(p,q)=d\) and define two subsets of \(S\):
		\[
			P \defeq \setb{s\in S}{d(s,p)\leq d(s,q)}\ \text{and}\ Q \defeq S\take P.
		\]

		If \(|P|\geq|Q|\), let \(x\defeq q\) and \(X\defeq P\); otherwise, let \(x\defeq p\) and \(X\defeq Q\). We note three facts about \(X\). Firstly, the definition of \(X\) and the triangle inequality imply that \(d(x,X)\geq d/2\); secondly, \(\diam X\leq d\); and thirdly, since \(X\) is chosen to be the larger of \(P\) and \(Q\), we have 
		\begin{equation}\label{eq:sizeX}
			|X|\geq(|P|+|Q|)/2=|S|/2.
		\end{equation}
		
		Now let \(k\defeq\lfloor(1+8/\alpha)^n\rfloor\). By \cref{res:covering}, we can cover \(X\) by \(k\) balls \(B_1,\ldots,B_k\) of radius \(\alpha d/4\) in \(M\). Now let \(X\cap B_j\) have maximal cardinality among the \(X\cap B_i\), and let \(S'\defeq X\cap B_j\). Let us demonstrate that \(S'\) has the three required properties. Firstly, \(d(x,S')\geq d(x,X)\geq d/2\); secondly, \(\diam S'\leq\diam B_j\leq\alpha d/2\); and finally, since \(S'\) is chosen to maximise \(|X\cap B_i|\), we have
		\begin{align*}
			|S'| \geq \frac{1}{k}\sum_{i=1}^{k}|X\cap B_i| &\geq \frac{|X|}{k},\\
			&\geq \frac{|S|}{2k},\text{ by \cref{eq:sizeX}},\\
			&= \frac{|S|}{2\lfloor(1+8/\alpha)^n\rfloor}.\qedhere
		\end{align*}
	\end{proof}

	This step is now applied recursively to produce a sequence of points whose consecutive distances decay like a geometric progression.

	\begin{lem}\label{res:shrinkingSequence}
		Let \(M\) be an \(n\)-dimensional nonnegatively curved Alexandrov space, \(m\) an integer greater than 2, and \(\alpha\in(0,1)\). Any subset \(S\subseteq M\) with \(|S|\geq2\left(2\lfloor(1+8/\alpha)^n\rfloor\right)^{m-2}\) contains a sequence of distinct points \(x_1,\ldots,x_m\) satisfying
		\begin{equation}\label{eq:shrinkingSequence}
			d(x_{i+1},x_{i+2})\leq\alpha d(x_i,x_{i+1}),~\text{ for }i\in\{1,\ldots,m-2\}.
		\end{equation}
	\end{lem}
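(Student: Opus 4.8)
The plan is to apply \cref{res:shrinkingSequenceStep} recursively $m-1$ times, each application trimming the ambient set by the factor $2k$ where $k \defeq \lfloor(1+8/\alpha)^n\rfloor$. Concretely, I would set $S^{(1)} \defeq S$ and build by induction on $j$ a nested chain of finite sets $S = S^{(1)} \supseteq S^{(2)} \supseteq \cdots \supseteq S^{(m)}$ together with points $x_j \in S^{(j)}$ for $j = 1, \ldots, m-1$, as follows: at stage $j$ apply \cref{res:shrinkingSequenceStep} to $S^{(j)}$ (with the given $\alpha$) to obtain $x_j \in S^{(j)}$ and $S^{(j+1)} \subseteq S^{(j)}$ satisfying $d(x_j, S^{(j+1)}) \geq \diam(S^{(j)})/2$, $\diam S^{(j+1)} \leq \alpha\diam(S^{(j)})/2$, and $|S^{(j+1)}| \geq |S^{(j)}|/(2k)$. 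Finally I would pick an arbitrary point $x_m \in S^{(m)}$.

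The first thing to verify is that this recursion does not terminate prematurely: \cref{res:shrinkingSequenceStep} requires $|S^{(j)}| \geq 2$, and after the last application we need $S^{(m)}$ to be nonempty. Iterating the cardinality bound gives $|S^{(m-1)}| \geq |S|/(2k)^{m-2}$, so the hypothesis $|S| \geq 2(2k)^{m-2}$ yields $|S^{(m-1)}| \geq 2$, and since the chain is nested this forces $|S^{(j)}| \geq 2$ for every $j \leq m-1$; the final application then produces $S^{(m)}$ with $|S^{(m)}| \geq |S^{(m-1)}|/(2k) > 0$, so $x_m$ indeed exists. This is the one place where the precise constant in the statement is used, and it is calibrated exactly so that the chain survives the required $m-1$ steps.

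With the chain constructed, I would then check the two conclusions. For distinctness: for each $j \leq m-1$ we have $|S^{(j)}| \geq 2$ and hence $\diam S^{(j)} > 0$, so $d(x_j, S^{(j+1)}) \geq \diam(S^{(j)})/2 > 0$ and in particular $x_j \notin S^{(j+1)}$; since every later point $x_i$ with $i > j$ lies in $S^{(i)} \subseteq S^{(j+1)}$, all the $x_j$ are distinct. For \cref{eq:shrinkingSequence}, write $d_j \defeq \diam S^{(j)}$: from $x_{i+1}, x_{i+2} \in S^{(i+1)}$ we get $d(x_{i+1}, x_{i+2}) \leq d_{i+1} \leq \alpha d_i/2$, while from $x_{i+1} \in S^{(i+1)}$ and $d(x_i, S^{(i+1)}) \geq d_i/2$ we get $d(x_i, x_{i+1}) \geq d_i/2$; combining these gives $d(x_{i+1}, x_{i+2}) \leq \alpha d_i/2 \leq \alpha d(x_i, x_{i+1})$ for all $i \in \{1, \ldots, m-2\}$, which is exactly \cref{eq:shrinkingSequence}. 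I do not anticipate a genuine obstacle here, since all of the geometry is already contained in \cref{res:shrinkingSequenceStep}; the only subtlety worth care is the indexing — matching the exponent $m-2$ in the cardinality hypothesis to the number of iterations while still leaving a nonempty residual set from which to draw $x_m$.
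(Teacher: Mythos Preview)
Your proposal is correct and follows the same iterative scheme as the paper---apply \cref{res:shrinkingSequenceStep} repeatedly and compare consecutive diameters---with only a cosmetic difference: the paper iterates $m-2$ times and then chooses \emph{two} distinct points $x_{m-1},x_m$ from the final set $S_{m-1}$, whereas you iterate once more and draw a single $x_m$ from $S^{(m)}$. One small omission: \cref{res:shrinkingSequenceStep} is stated for \emph{finite} $S$, so (as the paper does) you should first replace an infinite $S$ by a finite subset of cardinality at least $2(2k)^{m-2}$.
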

	\begin{proof}
		If \(S\) is infinite, we may replace \(S\) with any finite subset of cardinality at least \(2\left(2\lfloor(1+8/\alpha)^n\rfloor\right)^{m-2}\). So assume that \(S\) is finite. We then make \(m-2\) applications of \cref{res:shrinkingSequenceStep} to \(S\). Specifically, starting with \(S_1\defeq S\) and defining \(S_{i+1}\defeq S_i'\), we get \(x_i\in S_i\) for \(i\in\{1,\ldots,m-2\}\). Then, letting \(k\defeq\lfloor(1+8/\alpha)^n\rfloor\), we have
		\begin{equation*}
			|S_{m-1}|\geq \frac{|S_{m-2}|}{2k}\geq\ldots\geq\frac{|S_1|}{(2k)^{m-2}}\geq\frac{2(2k)^{m-2}}{(2k)^{m-2}}=2.
		\end{equation*}

		Thus, we can arbitrarily choose two distinct points \(x_{m-1}\) and \(x_m\in S_{m-1}\). We thus have our sequence \(x_1,\ldots,x_m\in S\); we just need to check that it satisfies \cref{eq:shrinkingSequence}. Indeed, let \(i\in\{1,\ldots,m-2\}\). Then
		\begin{align*}
			d(x_{i+1},x_{i+2}) &\leq \diam S_{i+1}, \hspace{-85pt} &&\text{ since } x_{i+1},x_{i+2}\in S_{i+1},\\
							   &\leq \frac{\alpha}{2}\diam S_i, \hspace{-85pt} &&\text{ by \cref{res:shrinkingSequenceStep},}\\
							   &\leq \alpha d(x_i,S_{i+1}), \hspace{-85pt} &&\text{ by \cref{res:shrinkingSequenceStep},}\\
							   &\leq \alpha d(x_i,x_{i+1}), \hspace{-85pt} &&\text{ since } x_{i+1}\in S_{i+1}.\qedhere
		\end{align*}
	\end{proof}

	We must now briefly depart from the linear flow of the argument to give the following technical result, which does not follow from any of the earlier results. It states that a sufficiently large collection of points in an Alexandrov space of curvature at least 1 contains two points that are close to each other. This is shown by using a packing result of Grove and Wilhelm (\cite[Prop.~1.3]{MR1343322}) to reduce the problem to the sphere, where it can be solved using elementary spherical geometry. The result is used in the proof of \cref{res:largeAngle} to ensure that, when our sequence \(x_1,\ldots,x_m\) is long enough, we can find indices \(1\leq i<j<m\) such that the angle \(\angle x_ix_mx_j\) is small.

	\begin{lem}\label{res:spaceOfDirPacking}
		Let \(\Sigma\) be an \((n-1)\)-dimensional Alexandrov space of curvature at least 1 and let \(\delta\in(0,\pi)\). Any collection of \(m-1\) points in \(\Sigma\) contains two at a distance at most \(\delta\), provided that \(m\geq m_n(\delta)\), where
		\begin{equation}\label{eq:mDelta}
			m_n(\delta)\defeq\left\lfloor\frac{2}{I_{\sin^2(\delta/2)}\left(\frac{n-1}{2},\frac{1}{2}\right)}\right\rfloor + 2,
		\end{equation}
		and \(I_t(a,b)\) is the regularised incomplete beta function.
	\end{lem}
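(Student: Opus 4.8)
The plan is to argue by contradiction, turning the statement into a packing estimate on the round sphere. Suppose $\Sigma$ contained $m-1$ points $x_1,\dots,x_{m-1}$ with $d(x_i,x_j)>\delta$ for all $i\ne j$; I will derive the inequality $m-1\le 2/I_{\sin^2(\delta/2)}(\tfrac{n-1}{2},\tfrac12)$, which forces $m\le m_n(\delta)-1$ and contradicts $m\ge m_n(\delta)$.

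First I would transfer the configuration to the unit sphere $\S^{n-1}$. Since $\Sigma$ is an $(n-1)$-dimensional Alexandrov space of curvature at least $1$, the packing comparison of Grove and Wilhelm \cite[Prop.~1.3]{MR1343322} shows that, for every $q$, the $q$-th packing radius of $\Sigma$ is at most that of $\S^{n-1}$; applied with $q=m-1$, the presence of $m-1$ points of $\Sigma$ pairwise more than $\delta$ apart yields $m-1$ points $\bar x_1,\dots,\bar x_{m-1}\in\S^{n-1}$ with $d(\bar x_i,\bar x_j)>\delta$ for all $i\ne j$. By the triangle inequality the open balls $B(\bar x_i,\delta/2)$ are pairwise disjoint, and since $\S^{n-1}$ is homogeneous they all share a common volume $v$, so $(m-1)\,v\le\Vol(\S^{n-1})$.

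It then remains to compute $v/\Vol(\S^{n-1})$ and recognise it as half of an incomplete beta function. Writing $\omega_{n-2}$ for the volume of $\S^{n-2}$, the standard formula for the volume of a metric ball in the round sphere gives $v=\omega_{n-2}\int_0^{\delta/2}\sin^{n-2}t\,dt$ and $\Vol(\S^{n-1})=\omega_{n-2}\int_0^{\pi}\sin^{n-2}t\,dt$. Because $\delta\in(0,\pi)$, the substitution $u=\sin^2 t$ is monotone on $[0,\delta/2]$ and converts the numerator into $\tfrac12\int_0^{\sin^2(\delta/2)}u^{(n-3)/2}(1-u)^{-1/2}\,du=\tfrac12 B\!\left(\sin^2(\delta/2);\tfrac{n-1}{2},\tfrac12\right)$, while $\int_0^{\pi}\sin^{n-2}t\,dt=2\int_0^{\pi/2}\sin^{n-2}t\,dt=B\!\left(\tfrac{n-1}{2},\tfrac12\right)$. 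Hence $v/\Vol(\S^{n-1})=\tfrac12 I_{\sin^2(\delta/2)}(\tfrac{n-1}{2},\tfrac12)$, so $m-1\le 2/I_{\sin^2(\delta/2)}(\tfrac{n-1}{2},\tfrac12)$; as $m-1$ is an integer it is at most $\lfloor 2/I_{\sin^2(\delta/2)}(\tfrac{n-1}{2},\tfrac12)\rfloor=m_n(\delta)-2$, and the required contradiction follows.

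The only genuine content beyond bookkeeping is the packing comparison, which I treat as a black box; the fiddly part of the write-up will be the beta-function identity in the last paragraph — keeping the half-angle substitution and the factors $\tfrac12$ and $2$ straight — while the strict-versus-nonstrict inequalities in the packing step are harmless, since disjointness of the open $(\delta/2)$-balls only needs pairwise distances $\ge\delta$. I would also remark that one can bypass \cite{MR1343322} and instead apply the Bishop–Gromov inequality \cite[Thm.~10.6.6]{burago2001course} directly in $\Sigma$: the balls $B(x_i,\delta/2)$ are disjoint, each has volume at least $\Vol(\Sigma)\cdot v/\Vol(\S^{n-1})$ by comparison with $\S^{n-1}$ (using $\diam\Sigma\le\pi$), and summing yields the same bound.
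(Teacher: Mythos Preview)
Your proof is correct and follows essentially the same route as the paper: reduce to \(\S^{n-1}\) via the Grove--Wilhelm packing comparison, then bound the number of pairwise \(\delta\)-separated points by a volume (disjoint caps) argument. The only differences are cosmetic: the paper cites the hyperspherical cap area formula from \cite{MR2813331} whereas you derive it via the \(u=\sin^2 t\) substitution, and your final remark about bypassing \cite{MR1343322} using Bishop--Gromov directly in \(\Sigma\) is a nice alternative that the paper does not mention.
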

	\begin{proof}
		We first show that it suffices to prove the result when \(\Sigma=\S^{n-1}\). To do this, we define the \emph{\(\mathnth{q}\) packing radius} of a compact metric space \(X\) (see \cite{MR1343322}). For a positive integer \(q\), this is the quantity
		\[
			\pack_qX \defeq \frac{1}{2}\max_{x_1,\ldots,x_q\in X}\left(\min_{i<j}(d(x_i,x_j))\right).
		\]
		Given this definition, observe that the following two statements are equivalent:
		\begin{enumerate}
			\item \(\pack_{m-1}X\leq\delta/2\),
			\item For all \(v_1,\ldots,v_{m-1}\in X\), there are indices \(i<j\) such that \(d(v_i,v_j)\leq\delta\).
		\end{enumerate}
		The content of the lemma is that \(m\geq m_n(\delta)\) implies the second statement for \(X=\Sigma\). But \cite[Prop.~1.3]{MR1343322} states that \(\pack_{m-1}\Sigma\leq\pack_{m-1}\S^{n-1}\), and so if the second statement holds for \(X=\S^{n-1}\), then it holds for \(X=\Sigma\). It therefore suffices to prove the lemma for \(\Sigma=\S^{n-1}\), which we now do. Two points \(v_i\) and \(v_j\) in \(\S^{n-1}\) being at a distance at most \(\delta\) is the same as the two closed metric balls of radius \(\delta/2\) centred on \(v_i\) and \(v_j\) intersecting. According to \cite[Eq.~1]{MR2813331}, the (\((n-1)\)-dimensional) area of a metric ball in \(\S^{n-1}\) (there called a \emph{hyperspherical cap}) of radius \(\delta/2\in(0,\pi/2)\) is
		\[
			\frac{1}{2}I_{\sin^2(\delta/2)}\left(\frac{n-1}{2},\frac{1}{2}\right)A_{n-1},
		\]
		where \(A_{n-1}\) is the area of \(\S^{n-1}\). By comparing these areas, it follows that any collection of at least \(m_n(\delta)-1\) hyperspherical caps of radius \(\delta/2\) in \(\S^{n-1}\) must contain an intersecting pair. The result follows.
	\end{proof}

	This completes the technical part of the section. We now demonstrate that, when \(n\) and \(\alpha\) are chosen appropriately, we can find three distinct elements of the above sequence that subtend an angle arbitrarily close to \(\pi\). \Cref{res:epsVertBndAlexandrov} then follows immediately.

	\begin{figure}[!ht]
		\labellist
		\small\hair 5pt
		\pinlabel \(M\) at 57 404
		\pinlabel \(y\) [b] at 358 420
		\pinlabel \(x\) [r] at 64 250
		\pinlabel \(T\) at 300 340
		\pinlabel \(z\) [l] at 510 353
		\pinlabel \(\R^2\) at 57 142
		\pinlabel \(\tilde y\) [b] at 361 163
		\pinlabel \(\tilde x\) [r] at 48 15
		\pinlabel \(\tilde T\) at 309 85
		\pinlabel \(\tilde z\) [l] at 527 102
		\endlabellist
		\centering
		\includegraphics[width=0.64\textwidth]{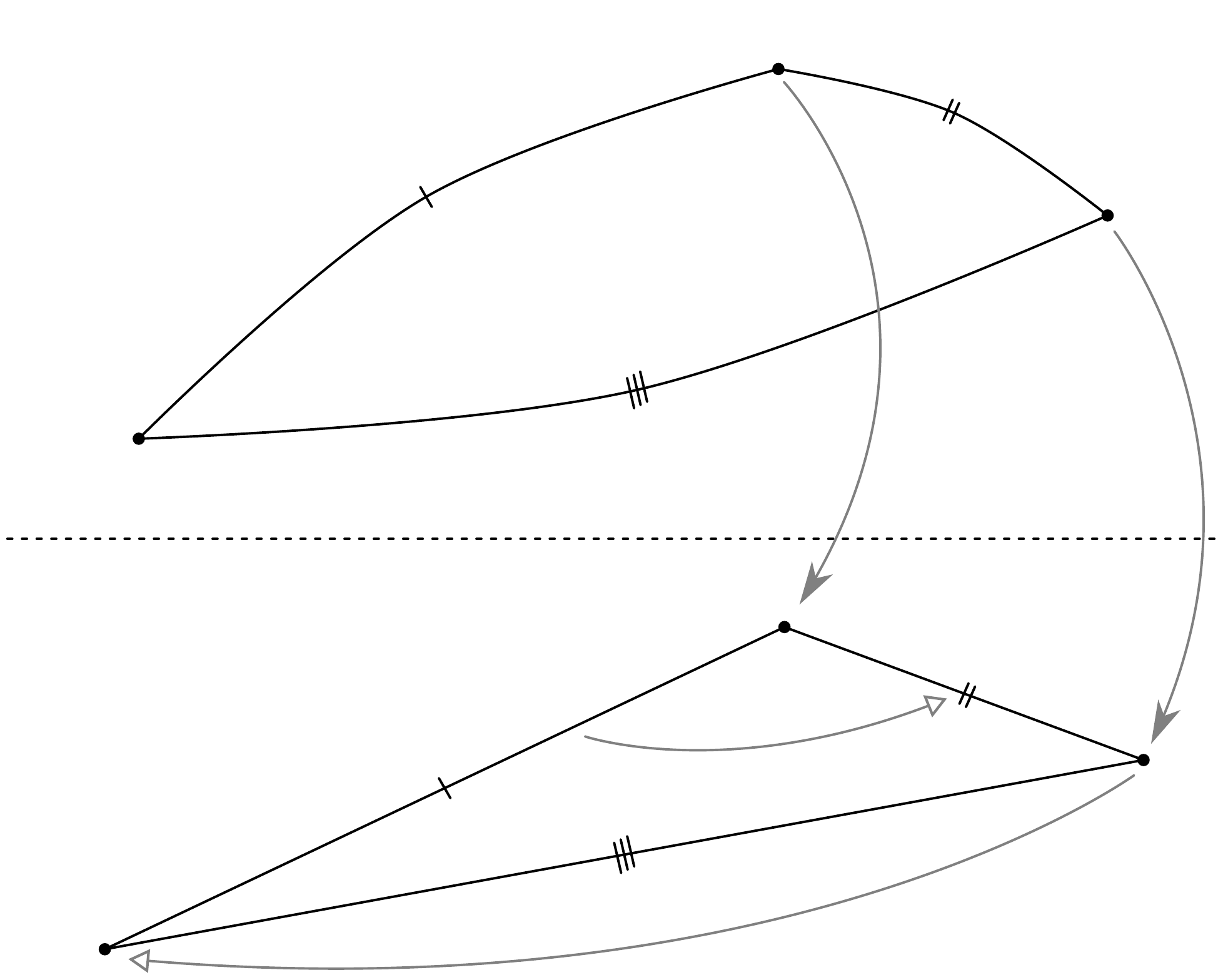}
		\caption{Grey arrows connect one point (or side) to another, signalling that the angle (or length) of the first is at least as big as that of the second. White headed arrows denote \emph{strict} inequalities.}
		\label{fig:triangleComparisons}
	\end{figure}

	\begin{lem}\label{res:largeAngle}
		Let \(M\) be an \(n\)-dimensional nonnegatively curved Alexandrov space and let \(\epsilon\in(0,\pi)\). Any subset \(S\) of \(M\) of cardinality at least \(2\cdot\left(2\cdot25^n\right)^{m_n((\pi-\epsilon)/2)-2}\), where \(m_n(-)\) is defined as in \cref{eq:mDelta}, contains three points \(x\), \(y\), and \(z\) such that \(\angle xyz>\epsilon\).
	\end{lem}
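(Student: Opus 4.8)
The plan is to fix $\alpha=1/3$ (so that $\lfloor(1+8/\alpha)^n\rfloor=25^n$) and $m=m_n((\pi-\epsilon)/2)$, and to build the required triangle out of a geometrically shrinking sequence in $S$. First one checks $m>2$: the regularised incomplete beta function $I_t(a,b)$ lies strictly in $(0,1)$ for $t\in(0,1)$, so $m_n(\delta)\geq4$ for every $\delta\in(0,\pi)$. Since $|S|\geq2\bigl(2\lfloor(1+8/\alpha)^n\rfloor\bigr)^{m-2}$, \cref{res:shrinkingSequence} furnishes distinct points $x_1,\dots,x_m\in S$ with $d(x_{i+1},x_{i+2})\leq\tfrac13 d(x_i,x_{i+1})$ for $i\in\{1,\dots,m-2\}$.

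Next I would pass to the space of directions $\Sigma_{x_m}M$, which is an $(n-1)$-dimensional Alexandrov space of curvature at least $1$. Fixing a geodesic $[x_mx_k]$ for each $k<m$ and taking its direction at $x_m$ gives a collection of $m-1$ points of $\Sigma_{x_m}M$; since $m\geq m_n((\pi-\epsilon)/2)$, \cref{res:spaceOfDirPacking} provides two of them at distance at most $(\pi-\epsilon)/2$. Relabelling the corresponding indices as $i<j$ (both $<m$), this says precisely that the angle $\angle x_ix_mx_j$ between $[x_mx_i]$ and $[x_mx_j]$ at $x_m$ is at most $(\pi-\epsilon)/2$; it is essential that these two geodesics be reused in the triangle below.

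The geometric decay now controls one side ratio: a telescoping application of the triangle inequality gives
\[
 d(x_{i+1},x_j)+d(x_j,x_m)\ \leq\ \sum_{k=i+1}^{m-1}d(x_k,x_{k+1})\ \leq\ \frac{\alpha}{1-\alpha}\,d(x_i,x_{i+1})\ =\ \tfrac12\,d(x_i,x_{i+1}),
\]
so, using $d(x_i,x_j)\geq d(x_i,x_{i+1})-d(x_{i+1},x_j)$ and $\tfrac{\alpha}{1-\alpha}\leq1$, one obtains $d(x_j,x_m)\leq\tfrac{\alpha}{1-\alpha}d(x_i,x_j)$, i.e.\ $\eta:=d(x_j,x_m)/d(x_i,x_j)\leq\tfrac12$. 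Now compare the geodesic triangle $\triangle x_ix_jx_m$ with its Euclidean comparison triangle $\tilde x_i\tilde x_j\tilde x_m$, whose sides have lengths $a=|x_jx_m|$, $b=|x_ix_m|$, $c=|x_ix_j|$, so $a=\eta c\leq c/2$. By Toponogov's theorem (\cite[Thm.~10.3.1]{burago2001course}) the angle $C$ of this triangle at $\tilde x_m$ satisfies $C\leq\angle x_ix_mx_j\leq(\pi-\epsilon)/2<\pi/2$; since $a^2\leq c^2/4<b^2+c^2$, the angle $A$ at $\tilde x_i$ (opposite the side of length $a$) is acute; and the law of sines gives $\sin A=\eta\sin C$, so $\sin A<\sin C$ once $C>0$, whence $A<C$. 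In every case $A+C<\pi-\epsilon$ (if $C>0$ then $A+C<2C\leq\pi-\epsilon$; if $C=0$ then $A=0$, so $A+C=0<\pi-\epsilon$), hence the angle at $\tilde x_j$ is $B=\pi-A-C>\epsilon$. One more application of Toponogov gives $\angle x_ix_jx_m\geq B>\epsilon$, so $x:=x_i$, $y:=x_j$, $z:=x_m$ is the required triple.

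The only step that demands genuine care is this last one: passing from the \emph{non-strict} bounds $\angle x_ix_mx_j\leq(\pi-\epsilon)/2$ and $\eta\leq\tfrac12$ to the \emph{strict} inequality $\angle x_ix_jx_m>\epsilon$. The decisive observation is that $\eta<1$ forces $A$ to be \emph{strictly} smaller than $C$ (both being acute), which is exactly what promotes $A+C\leq\pi-\epsilon$ to $A+C<\pi-\epsilon$; without it one obtains only $B\geq\epsilon$. Everything else is bookkeeping: choosing $\alpha$ and $m$ so that the cardinality threshold is exactly $2\bigl(2\cdot25^n\bigr)^{m_n((\pi-\epsilon)/2)-2}$, making sure the geodesics used in \cref{res:spaceOfDirPacking} agree with those in Toponogov's comparison, and noting that the comparison triangle can degenerate only to the harmless case $B=\pi$.
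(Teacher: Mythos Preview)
Your proof is correct and follows the same route as the paper's: apply \cref{res:shrinkingSequence} with \(\alpha=1/3\) and \(m=m_n((\pi-\epsilon)/2)\), project to the space of directions at \(x_m\) and use \cref{res:spaceOfDirPacking} to find indices \(i<j\) with \(\angle x_ix_mx_j\leq(\pi-\epsilon)/2\), then invoke Toponogov together with the side inequality \(|x_jx_m|<|x_ix_j|\) (forced by the geometric decay with ratio \(1/3\)) to conclude \(\angle x_ix_jx_m>\epsilon\). Your treatment is, if anything, slightly more careful than the paper's in verifying \(m>2\), in insisting that the geodesics chosen for the packing step be reused in the comparison, and in handling the degenerate comparison triangle.
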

	\begin{proof}
		We apply \cref{res:shrinkingSequence} with \(m=m_n((\pi-\epsilon)/2)\) and with \(\alpha=1/3\). We have \(1+8\cdot3=25\), so to do this we need \(|S|\geq2\cdot\left(2\cdot25^n\right)^{m_n((\pi-\epsilon)/2)-2}\). Applying \cref{res:shrinkingSequence} to \(S\) gives a sequence of distinct points \(x_1,\ldots,x_m\) in \(S\) satisfying \cref{eq:shrinkingSequence} with \(\alpha=1/3\). Now, the space of directions \(\Sigma\) at \(x_m\) is, by \cite[Thm.~10.8.6]{burago2001course}, an \((n-1)\)-dimensional Alexandrov space of curvature at least 1. Projecting \(x_1,\ldots,x_{m-1}\) onto \(\Sigma\) via shortest paths, we get a collection of points \(v_1,\ldots,v_{m-1}\) in \(\Sigma\).  By the choice of \(m\), we can apply \cref{res:spaceOfDirPacking} to get indices \(1\leq i<j<m\) such that the distance between \(v_i\) and \(v_j\) in \(\Sigma\) is at most \((\pi-\epsilon)/2\). By definition, this distance is equal to \(\angle x_ix_mx_j\). So take \(x=x_i\), \(y=x_j\), and \(z=x_m\)---then \(\angle xzy\leq(\pi-\varepsilon)/2\). By repeatedly applying the triangle inequality and looking at an appropriate geometric series, we see that \(|yz|=d(x_j,x_n)<d(x_i,x_{i+1})/2<d(x_i,x_j)=|xy|\) (this is ensured by the choice \(\alpha=1/3\)). Finally, we apply Toponogov's theorem (\cite[Thm.~10.3.1]{burago2001course}) to the triangle \(T\defeq\triangle xyz\) (see \cref{fig:triangleComparisons} for a visual explanation of this). Let \(\tilde{T}=\triangle\tilde{x}\tilde{y}\tilde{z}\subseteq\R^2\) be a comparison triangle for \(T\). We have \(\angle xyz\geq\angle\tilde{x}\tilde{y}\tilde{z}\) and \(\angle xzy\geq\angle\tilde{x}\tilde{z}\tilde{y}\) by Toponogov's theorem, and by the Euclidean maxim \emph{``The larger angle is opposite the larger side'',} we have \(\angle\tilde{x}\tilde{z}\tilde{y}>\angle\tilde{y}\tilde{x}\tilde{z}\). Therefore,
		\[
			\angle xyz\geq\angle\tilde{x}\tilde{y}\tilde{z}=\pi-\angle\tilde{x}\tilde{z}\tilde{y}-\angle\tilde{y}\tilde{x}\tilde{z}>\pi-2\angle\tilde{x}\tilde{z}\tilde{y}\geq\pi-2\angle xzy\geq\varepsilon. \qedhere
		\]
	\end{proof}

	\begin{proof}[Proof of \cref{res:epsVertBndAlexandrov}]\label{proof:epsVertBndAlexandrov}
		For \(n\in\N\) and \(\varepsilon\in(0,\pi)\), let \(M\) be an \(n\)-dimensional nonnegatively curved Alexandrov space and let \(S\) be the set of \(\varepsilon\)-narrow points of \(M\). We apply the contrapositive of \cref{res:largeAngle} to \(S\). The angles of any triangle with vertices in \(S\) are at most \(\varepsilon\), so we must have \(|S|<B(n,\varepsilon)\defeq2\cdot\left(2\cdot25^n\right)^{m_n((\pi-\epsilon)/2)-2}\).
	\end{proof}

	We can now specialise back to the case of polyhedral 3-manifolds, by deducing the existence of a singular vertex bound---i.e., by proving \cref{res:singVertBnd}.

	\begin{proof}[Proof of \cref{res:singVertBnd}]
		Let \(M\) be a nonnegatively curved integral polyhedral 3-manifold. Denote by \(\sing{M}^0\) the set of singular vertices of \(M\). \Cref{res:linkDiameterBoundedAwayFromPi} tells us that every point of \(\sing{M}^0\) is \(\varepsilon_0\)-narrow, for some fixed \(\varepsilon_0\). Therefore, since \(M\) is a 3-dimensional nonnegatively curved Alexandrov space (by \cite[Ex.~2.9~(6)]{MR1185284}), we may apply \cref{res:epsVertBndAlexandrov} to deduce that \(|\sing{M}^0|<B_{\rm{ver}}\defeq B(3,\varepsilon_0)\), which is constant.
	\end{proof}

	\begin{rem}
		The vertex bound \(B_{\rm{ver}}\) is by no means effective from a numerical standpoint. Looking at the definition of \(B(n,\varepsilon)\) from the \hyperref[proof:epsVertBndAlexandrov]{proof of \cref*{res:epsVertBndAlexandrov}}, we calculate that \(B_{\rm{ver}}=2\cdot31250^{m_3((\pi-\epsilon_0)/2)-2}\). When \(n=3\), \cref{eq:mDelta} reduces to
		\[
			m_3(\delta) = \left\lfloor \frac{2}{1-\cos(\delta/2)} \right\rfloor + 2,
		\]
		and for integral vertices \(\varepsilon_0\) is at least \(5\pi/6\) (this can be seen by noting that link \#1 in \cref{tab:intVerts} is the double of a triangle having an edge of length \(5\pi/6\)). This bound is therefore at least in the region of \(10^{1051}\). The greatest number of singular vertices in a nonnegatively curved integral polyhedral 3-manifold known to the author is 32. Our aim has therefore been to demonstrate the \emph{existence} of the bound. To produce any useful numerical bound, a different approach must be taken.
	\end{rem}


\section{Further Directions} 
	\label{sec:further}

	\noindent This paper has been concerned with \emph{singular vertices} of polyhedral 3-manifolds—i.e., codimension 3 singularities. As touched upon in the introduction, the term `singular vertex' comes from the following simple fact.
	
	\begin{lem}\label{res:singLocusIsGraphWithLoopsAndLines}
		The connected components of the singular locus of a nonnegatively curved polyhedral 3-manifold are:
		\begin{enumerate}
		\item Locally finite graphs, whose edges have constant conical angle and whose vertices have degree at least 3;
		\item Circles with constant conical angle; or
		\item Lines of infinite length with constant conical angle.
		\end{enumerate}
		When the manifold is compact, case 3 does not occur.
	\end{lem}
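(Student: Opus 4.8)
The plan is to recover the local structure of \(\sing M\) near each of its points from the conical points of the corresponding link, and then to patch these pictures together.

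First I would record that \(\sing M\) is a locally finite graph: by \cref{defn:conicalAngle} it is a union of closed codimension-\(2\) simplices of the triangulation — equivalently, of closed edges of its \(1\)-skeleton — and the triangulation is locally finite. I would also note that every singular \(0\)-simplex lies on a singular edge: if all edges at a \(0\)-simplex \(v\) had conical angle \(2\pi\), then \(\link_vM\) would have no conical points, hence \(\link_vM\homeo\S^2\) and \(v\) would be regular. So this graph is exactly \(\sing M\). Next, for any \(x\in\sing M\) the conical points of \(\link_xM\) correspond bijectively to the singular edges issuing from \(x\), the cone angle at each such point equalling the conical angle of the corresponding edge (both are the length of the relevant cross-sectional circle). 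The crux is then a trichotomy for \(\link_xM\): it has either no conical points (so \(\link_xM\homeo\S^2\), i.e.\ \(x\) is regular, which is excluded), exactly two, necessarily of equal angle \(\theta\in(0,2\pi)\), or at least three. In the two-point case \(\link_xM\isom\S^2(\theta,\theta)\) is a spherical suspension, so \(C(\link_xM)\isom\R\times C(S^1_\theta)\) and \(x\) has a neighbourhood in which \(\sing M\) is a straight segment through \(x\) of constant conical angle \(\theta\); in the remaining case \(x\) is, by definition, a singular vertex.

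Granting the trichotomy, let \(V\subseteq\sing M\) be the set of singular vertices, which is discrete and closed, so that \(\sing M\take V\) is a \(1\)-manifold without boundary (every point has a segment neighbourhood), and the conical angle is constant along each of its components. If a connected component \(K\) of \(\sing M\) misses \(V\), then \(K\) is a single component of this manifold: either a circle (case 2) or a bi-infinite arc; in the latter case completeness of the metric (\cref{defn:polyhedralManifold}) forces infinite length, since a finite-length arc would acquire a limit point lying in the closed set \(K\), contradicting injectivity — so \(K\) is a line of infinite length (case 3). If \(K\) meets \(V\), then \(K\) is a locally finite graph with vertex set \(K\cap V\); its edges are the closures of the arc-components of \(K\take V\), each of constant conical angle, and each end of such an arc either runs off to infinity or — by completeness again — limits onto a vertex, so there are no ``free'' ends. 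Since a singular vertex has at least three conical points in its link, it has at least three incident singular edges, i.e.\ degree at least \(3\); this gives case 1. Finally, if \(M\) is compact then so is \(\sing M\), ruling out the non-compact pieces (lines and rays); in particular, case 3 cannot occur.

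I expect the main obstacle to be the trichotomy used above — that a link cannot have exactly one conical point, and that two conical points must have equal angle. For the former I would argue that a ``free'' end of \(\sing M\) at \(x\) would make \(\link_xM\) have a single conical point \(v\); then a small ball at \(x\) with the singular segment deleted retracts onto \(\link_xM\take\{v\}\homeo\R^2\), so a loop in \(\reg M\) near \(x\) linking that segment is null-homotopic in \(\reg M\) and has trivial monodromy — contradicting the fact that this monodromy is a rotation by the conical angle \(\theta\), which is nontrivial because \(0<\theta<2\pi\). (This is exactly where nonnegative curvature is used: a conical angle in \(2\pi\Z\take\{2\pi\}\) would permit a free end.) For the latter I would invoke the classical fact that a constant-curvature metric on \(S^2\) with exactly two conical points has equal cone angles, so that a two-point link is a ``football'' \(\S^2(\theta,\theta)\); settling on the most convenient citation for this is the last delicate point, the remainder being routine use of local finiteness, completeness, and the classification of connected \(1\)-manifolds.
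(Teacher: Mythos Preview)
Your argument is correct and considerably more detailed than the paper's own proof, which consists of a single sentence: ``This follows immediately from a generalisation of \cite[Cor.~3.11]{boileau2005geometrization} to the case of conical angles \(<2\pi\).'' The paper simply delegates the entire structure result to the literature, whereas you reconstruct it from first principles via the local picture of the link. Your route has the advantage of being self-contained and of making explicit exactly where the hypothesis of nonnegative curvature enters --- namely, in forcing the holonomy around a putative free end to be a nontrivial rotation (since \(0<\theta<2\pi\)), and likewise in the two-cone-point case, where the same holonomy consideration in the annulus \(S^2\setminus\{v_1,v_2\}\) forces \(\theta_1=\theta_2\) without needing the full strength of the Troyanov-type classification. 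One small remark: your parenthetical that nonnegative curvature is used ``exactly'' at the free-end step slightly undersells things, since the equal-angle step uses it in the same way; but this does not affect correctness. The paper's approach buys brevity at the cost of an external dependence; yours buys transparency and would slot in well as a replacement proof.
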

	\begin{proof}
	This follows immediately from a generalisation of \cite[Cor.~3.11]{boileau2005geometrization} to the case of conical angles \(<2\pi\).
	\end{proof}

	The vertices of the graph components of the singular locus are precisely the codimension 3 singularities of the polyhedral 3-manifold, i.e., the singular vertices. The conical points of the link of a singular vertex correspond to the edges incident to that vertex. We refer to the edges of the graph components, the circles, and the lines collectively as the \emph{singular edges}.

	A natural extension to what has been done in this paper is to attempt to control the size of the singular locus as a whole, rather than just its vertices. The author believes that the following result holds.

	\begin{conj}[Singular edge bound]\label{res:singEdgeBnd}
		There is a constant \(B\in\N\) such that any (complete, connected) nonnegatively curved integral polyhedral 3-manifold has fewer than \(B\) singular edges.
	\end{conj}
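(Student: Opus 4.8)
Here we outline a possible line of attack on \cref{res:singEdgeBnd}; we do not know how to complete it, and flag the main obstacle at the end.

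\emph{Reduction to circles and lines.} By \cref{res:singLocusIsGraphWithLoopsAndLines}, the singular locus of \(M\) is a disjoint union of graph components, circles, and — only when \(M\) is noncompact — bi-infinite lines. For a graph component, every vertex is a singular vertex, so by \cref{res:classificationOfSingularVerts} its link has at most five conical points and hence the vertex has degree at most \(5\); since \(M\) has fewer than \(B_{\rm{ver}}\) singular vertices in total by \cref{res:singVertBnd}, the handshake lemma (each edge of a graph component has two endpoints among the vertices, a loop counting its endpoint twice) gives fewer than \(\tfrac{5}{2}B_{\rm{ver}}\) edges lying in graph components, and shows moreover that there are only finitely many such components and that they are finite. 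It therefore suffices to bound the number of singular circles and lines by a universal constant.

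\emph{The noncompact case.} For singular lines one expects a soul-type reduction to lower dimensions: a noncompact nonnegatively curved Alexandrov \(3\)-space retracts, via the Sharafutdinov retraction, onto a compact totally convex soul \(\mathcal S\) of dimension at most \(2\) (see, e.g., \cite{MR3930625}), and since this retraction is realised by a gradient flow — which preserves extremal subsets — and the singular strata are extremal, the whole singular locus retracts into \(\mathcal S\). When \(\dim\mathcal S = 2\), the soul is (with its induced structure) a nonnegatively curved integral polyhedral surface, which has at most \(12\) conical points by the \(2\)-dimensional Gauss--Bonnet bound recalled in \cref{sec:intro}, and each singular line or circle of \(M\) lies over one of these; when \(\dim\mathcal S\le 1\) the singular locus is constrained even more sharply. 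Making this precise — in particular keeping track of which conical points of \(\mathcal S\) are ``thickened'' to singular lines as opposed to circles — is routine but somewhat involved, and in any case reduces \cref{res:singEdgeBnd} to the compact case.

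\emph{The compact case, and the main obstacle.} Suppose now \(M\) is compact; by the above it remains to bound the number of singular circles. One ingredient is immediate: a point on a singular edge of conical angle \(\theta\) has tangent cone \(\R\times C_\theta\), where \(C_\theta\) is the flat \(2\)-cone of angle \(\theta\), and since by \cref{res:integralConicalAngles} the angle \(\theta\) takes one of finitely many values, all bounded away from \(2\pi\), a triangle-excess estimate in \(C_\theta\) produces a universal \(\delta_0>0\) such that no metric ball about such a point is \((2,\delta_0)\)-splitting in the sense of \cref{def:quantSplitting}. Hence the entire singular locus lies in the quantitative stratum \(\oldcal S^1_{\delta_0}\), and Li and Naber's estimate \cite[Cor.~1.4]{MR4171913} — or, in this low-dimensional setting, a direct Bishop--Gromov and covering argument in the spirit of \cref{sec:proof} — bounds the total length of the singular locus inside any unit ball by a universal constant. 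The obstacle is to upgrade this \emph{total length} bound to a bound on the \emph{number} of circles: a priori \(M\) could contain singular circles of lengths \(\ell,\ell/2,\ell/4,\dots\), so that their lengths are summable while their number is unbounded, and ruling out such scale cascades is where the integral and polyhedral hypotheses must genuinely enter — note that the vertex bound of \cref{sec:proof} came not from a volume count but from a combinatorial argument using the rigidity of \(\varepsilon\)-narrow points, whereas a singular circle is not rigid in that sense. A plausible route is to establish a uniform ``fatness'' estimate (a universal \(c>0\) with the metric \(c\,\ell_\gamma\)-neighbourhood of any singular circle \(\gamma\) being a standard twisted flat cone bundle over \(\gamma\) that meets no other part of the singular locus — a local statement once one controls how long geodesics normal to \(\gamma\) stay minimising), complemented by an argument that beyond radius comparable to \(\ell_\gamma\) either the curvature or the singular locus of \(M\) intervenes, so that each circle occupies an essentially maximal flat tube; one would then still need to force these tubes to occupy a definite proportion of \(M\) at boundedly many scales. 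Alternatively, one might run a blow-up argument, taking a hypothetical sequence \(M_i\) with unboundedly many singular circles, rescaling about carefully chosen basepoints and scales, and extracting a Gromov--Hausdorff limit that violates the quantitative-stratification bound above. Turning either heuristic into a proof — propagating the local rigidity of the model around a singular circle to a genuine global count — is, to the author's knowledge, open, and is the heart of \cref{res:singEdgeBnd}.
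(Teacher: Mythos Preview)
Your proposal is honest about what it is: not a proof, but a sketch of a possible attack with the main obstacle flagged. That is exactly right, because the paper does not prove this statement either --- it is stated as a conjecture, and the surrounding discussion says explicitly that ``as yet we have no way to bound the total number'' of singular edges.

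Your reduction to circles and lines via the degree-\(5\) bound and \cref{res:singVertBnd} is precisely the argument the paper gives for why \cref{res:singCircLineBnd} would imply \cref{res:singEdgeBnd}. Your observation that Li--Naber (or a direct covering argument) bounds total singular \emph{length} by a multiple of the diameter also matches the paper's one-line remark after the conjecture. So on the parts that are actually settled, you and the paper agree.

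Where you go beyond the paper is in the soul-type discussion of the noncompact case and in the more detailed heuristics (fatness of tubes around singular circles, blow-up limits) for attacking the compact case. The paper offers nothing comparable --- it simply records the conjecture and the length-versus-number gap. Your noncompact sketch is plausible but would need care: the Sharafutdinov retraction in the Alexandrov setting does send extremal subsets to extremal subsets, but the claim that distinct singular circles or lines retract to \emph{distinct} conical points of a \(2\)-dimensional soul is not obvious and would itself need an argument. Your identification of the central obstacle --- that a summable sequence of circle lengths defeats any purely measure-theoretic bound, so one needs some rigidity input from integrality --- is exactly the point, and is genuinely open as far as both you and the paper are concerned.
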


	This result, of course, implies \cref{res:singVertBnd}, but we will probably need to use \cref{res:singVertBnd} to prove it. We know from \cite[Cor.~1.4]{MR4171913} that the sum of the lengths of the singular edges is less than some constant times the diameter of the space, but as yet we have no way to bound the total number. \Cref{res:singEdgeBnd} can also be weakened in another way, this time by neglecting the graph components of the singular locus.

	\begin{conj}[Singular circle/line bound]\label{res:singCircLineBnd}
		There is a constant \(B_\rm{lin}\in\N\) such that the total number of singular circles and lines in any (complete, connected) nonnegatively curved integral polyhedral 3-manifold is less than \(B_\rm{lin}\).
	\end{conj}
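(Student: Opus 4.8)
The plan is to reduce, through a chain of structural steps, to the case of a simply connected compact nonnegatively curved integral polyhedral $3$-manifold---necessarily homeomorphic to $S^3$---and then to attack that case by a Gromov--Hausdorff compactness argument. Three geometric inputs are used throughout: the Toponogov splitting theorem (a nonnegatively curved Alexandrov space containing a metric line splits isometrically as $\R\times X$), the soul construction for noncompact nonnegatively curved spaces, and the bound of at most $12$ conical points on a nonnegatively curved integral polyhedral surface quoted in \cref{sec:intro}. The relevant feature of a singular circle or line is, as in the discussion after \cref{res:singLocusIsGraphWithLoopsAndLines}, that it is a component of $\sing M$ containing no singular vertex; it is therefore a complete geodesic---closed, in the circle case---of constant conical angle, which by \cref{res:integralConicalAngles} is at most $5\pi/3$, so its curvature deficit is at least $\pi/3$.

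The reductions run as follows. First, to dispose of singular lines, I would show (passing to the universal cover if necessary) that a bi-infinite singular geodesic is in fact a metric line: transverse to a singular edge the space is a flat cone, so such a geodesic has no focal behaviour, and one rules out a shortcut through the regular locus using the developing map. Toponogov splitting then gives $M\isom\R\times N$ with $N$ a nonnegatively curved polyhedral surface; the singular locus of this product is $\R\times\sing N$, so the singular lines of $M$ are accounted for by the at most $12$ conical points of $N$, and $M$ has no singular circles. Iterating the splitting until $N$ is line-free, one is left with the noncompact line-free case, where $M$ retracts onto a compact totally convex soul $\mathcal S$ of dimension at most $2$. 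If $\dim\mathcal S=0$, then $M$ is a metric cone and $\sing M$ is the cone over the finitely many conical points of the link---a graph with a single vertex, and no circles or lines. If $\dim\mathcal S=1$, the soul is a circle, the fibres of $M$ over it are nonnegatively curved polyhedral planes with boundedly many conical points (Gauss--Bonnet again), and $\sing M$ is the corresponding bounded union of circles. If $\dim\mathcal S=2$, the soul is a nonnegatively curved polyhedral surface with at most $12$ conical points, and these bound the singular circles and lines of $M$. Finally, for compact $M$ one passes to the universal cover: by the Cheeger--Gromoll splitting, $\widetilde M\isom\R^k\times N$ with $N$ compact and simply connected, and since the deck group permutes the components of the singular locus of $\widetilde M$, the number of singular circles and lines of $M$ is at most that of $\R^k\times N$. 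This is at most $12$ when $k\ge1$, so the whole problem comes down to the case $k=0$: bounding the number of singular circles of a simply connected compact nonnegatively curved integral polyhedral $S^3$.

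This last case is the genuine difficulty. Its graph part is already under control, since each singular vertex has at most five incident edges (\cref{res:classificationOfSingularVerts}) and there are fewer than $B_\rm{ver}$ of them by \cref{res:singVertBnd}; what remains is to bound the number of pairwise disjoint singular \emph{circles}. The obstruction is that neither the diameter nor the volume of $N$ is a priori bounded, so a direct packing of disjoint normal tubes does not close. The approach I would try is a proof by contradiction: given a sequence $N_i$ with arbitrarily many singular circles, rescale to unit diameter and pass to a Gromov--Hausdorff limit $N_\infty$, an at most $3$-dimensional nonnegatively curved Alexandrov space. One then has to argue that the singular circles cannot all disappear in the limit---those whose diameters stay bounded below should each contribute a definite amount of $\oldcal S^1_\delta$-content to $N_\infty$ (using the uniform $\pi/3$ deficit and a lower-semicontinuity property of $\oldcal S^1_\delta$ under such convergence), contradicting the finiteness of $\mathcal H^1(\oldcal S^1_\delta(N_\infty)\cap B_1)$ from \cite[Cor.~1.4]{MR4171913}, while those that collapse to points become elements of the finite set $\oldcal S^0(N_\infty)$, so one must further show that only boundedly many circles can collapse onto each such point. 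Ruling out this collapsing behaviour---infinitely many mutually disjoint singular circles shrinking into a bounded region---is where I expect essentially all of the real work to lie; a possible alternative route, using integrality more directly, is to observe that the holonomy around a singular circle is a rotation whose axis is a symmetry axis of the crystallographic monodromy group of \cref{res:possibleMonGroupsIn3D}, and to bound the number of such axes that can persist in a compact quotient. The structural reductions above, by contrast, should be essentially routine.
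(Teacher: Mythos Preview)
The statement you are attempting to prove is not a theorem in the paper: it is stated as \cref{res:singCircLineBnd}, a \emph{conjecture}, in the ``Further Directions'' section, and the paper offers no proof. There is therefore nothing to compare your argument against; the author explicitly leaves this open, remarking only that \cref{res:singCircLineBnd} together with \cref{res:singVertBnd} would imply \cref{res:singEdgeBnd}.

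As for the proposal itself, it is a strategy sketch rather than a proof, and you are candid about this: you identify the compact simply connected case as the ``genuine difficulty'' and then describe a Gromov--Hausdorff limiting approach whose crux---controlling infinitely many disjoint singular circles collapsing into a bounded region---you explicitly leave unresolved. That is the heart of the problem, and nothing in your outline closes it. Your reduction steps are also not as routine as you suggest. The claim that a bi-infinite singular edge is a \emph{metric line} (globally minimising) is not automatic: a locally geodesic line in a nonnegatively curved space need not be a line in the Toponogov sense, and your ``no focal behaviour plus developing map'' sketch does not rule out shortcuts through other parts of the singular locus or through topologically nontrivial regions before passing to the universal cover. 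Likewise, the soul-theoretic analysis in the $\dim\mathcal S=1$ case presupposes a fibration structure with polyhedral plane fibres that is not justified; the soul construction gives a distance-nonincreasing retraction, not in general a fibre bundle with controlled fibre geometry. So even the ``structural reductions'' contain real gaps, and the conjecture remains open.
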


	Since, by \cref{res:classificationOfSingularVerts}, the maximum degree of a singular vertex is 5, \cref{res:singVertBnd} implies a bound on the number of edges in the graph components of the singular locus. \Cref{res:singCircLineBnd} would therefore imply \cref{res:singEdgeBnd}.

	Another way to extend \cref{res:singVertBnd} is to generalise it to higher dimensions. The proof did not actually require the full classification of singular vertices, but only that there are finitely many isometry types. If we can show that, for any fixed dimension, there are only finitely many types of singular vertices (i.e., singularities of maximal codimension), then we should be able to apply \cref{res:epsVertBndAlexandrov} to nonnegatively curved integral polyhedral manifolds of any dimension.


\appendix
\section*{Appendix: Integral Vertices} 
	\label{sec:appendix}
	\setcounter{section}{1}
	\setcounter{thm}{0}

	\noindent The aim of this appendix is to give examples of the steps in \cref{proc:findingRamifiedCovers} and then to list the links of every possible singular vertex in nonnegatively curved integral polyhedral 3-manifolds in \cref{tab:intVerts}.

	\begin{exmp*}\label{exmp:findingRamifiedCovers}
	Here we give step-by-step examples of \cref{proc:findingRamifiedCovers} for finding ramified covers. Theoretically speaking, nothing is added here, but the process is more clearly illustrated.
	\begin{enumerate}
		\item Let us consider the case of \(n=4\); i.e., of spheres with four conical points. Steps 1 and 2 are straightforward. There are 32 (unordered) quadruples satisfying \cref{eq:positivity}.
		\item Of the 32 quadruples from step 1, only one of them does not satisfy \cref{eq:lunes}: \(\left(\frac{\pi}{2},\frac{3\pi}{2},\frac{3\pi}{2},\frac{3\pi}{2}\right)\).
		\item One of the remaining quadruples is \(\left(\frac{4\pi}{3},\frac{3\pi}{2},\frac{3\pi}{2},\frac{3\pi}{2}\right)\). Using Gauss--Bonnet, we calculate that a sphere with these conical angles has area \(11\pi/6\) and so is a degree 11 ramified cover of \(\S^2/S_4\). The three conical points of angle \(3\pi/2\) must be multiplicity 3 preimages of the conical point of angle \(\pi/2\). Since the multiplicities must add to 11, there must also be one multiplicity 2 preimage. But this implies the existence a conical point of angle \(\pi\); a contradiction. Thus, this tuple cannot satisfy \cref{eq:degArea}.
		\item One of the 15 remaining quadruples at this stage is \(\left(\pi,\frac{3\pi}{2},\frac{3\pi}{2},\frac{3\pi}{2}\right)\). We calculate that the degree of the cover must be 9, and the only possible multiplicity datum is: \(m_1^1=1,m_1^2=\ldots=m_1^5=2;\) \(m_2^1=m_2^2=m_2^3=3;\) \(m_3^1=m_3^2=m_3^3=3\). The \emph{dessin d'enfant} \(\varphi^{-1}([y_1y_2])\) would be a bipartite graph, with three faces of degree 6 (corresponding to preimages of \(y_3\)), with four vertices marked \(y_1\) of degree 2 and one of degree 1, and with three vertices marked \(y_2\) of degree 3. It can be shown that no such graph embedded in \(S^2\) exists (see \cref{fig:noSigGraph}).
		\item At this stage, we have 12 different ramified covers, only two of which have the same conical angles: \(\left(\pi,\pi,\frac{4\pi}{3},\frac{4\pi}{3}\right)\). They are distinguished by whether or not the points of angle \(\pi\) are preimages of \(y_1\) or \(y_3\). By expanding the \emph{dessins} \(\varphi^{-1}([y_1y_2])\) to full triangulations, we can see that they are doubles of two noncongruent spherical quadrilaterals, and therefore cannot be isometric (see \cref{fig:twoSigGraphs}).
	\end{enumerate}
	\end{exmp*}

	\begin{figure}[!ht]
		\centering
		\includegraphics[width=0.5\textwidth]{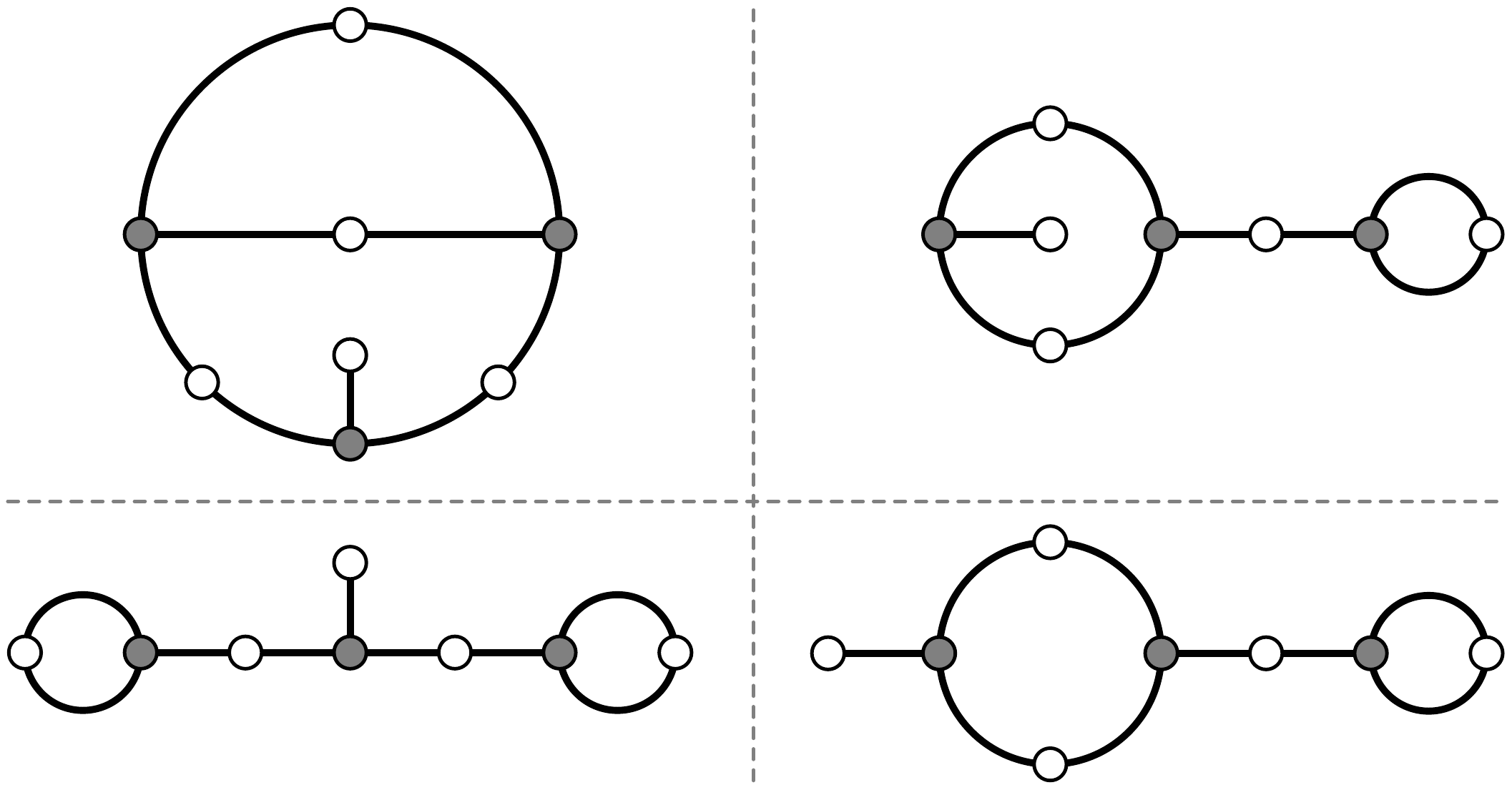}
		\caption{White and grey vertices are marked \(y_1\) and \(y_2\) respectively. This figure demonstrates the search for the \emph{dessin d'enfant} corresponding to the tuple \(\left(\pi,\frac{3\pi}{2},\frac{3\pi}{2},\frac{3\pi}{2}\right)\). If the proposed graph existed, deleting the sole vertex of degree 1 and then temporarily ignoring those of degree 2 leaves a trivalent graph with two grey vertices. There are only two such graphs---the theta graph and the barbell graph. There are four ways to add back in the degree 1 and 2 vertices that give distinct embedded graphs in \(S^2\); these are shown above. None of them has all three faces having degree 6.}
		\label{fig:noSigGraph}
	\end{figure}

	\begin{figure}[!ht]
		\floatbox[{\capbeside\thisfloatsetup{capbesideposition={right,center},capbesidewidth=0.53\textwidth}}]{figure}[\FBwidth]
		{\caption{White, grey, and black vertices are marked \(y_1\), \(y_2\), and \(y_3\) respectively. This figure demostrates the construction of the two nonisometric ramified covers of \(\S^2/S_4\) corresponding to the tuple \(\left(\pi,\pi,\frac{4\pi}{3},\frac{4\pi}{3}\right)\). The two constructions are shown in parallel on the left and the right (corresponding to entries \#23 and \#24 in \cref{tab:intVerts} respectively). The first row gives the two possible multiplicity data. The second row shows the corresponding \emph{dessins d'enfants}. The third row shows the full triangulations of \(S^2\) given by pulling back the triangulation of \(\S^2/S_4\) along the ramified covering maps. At this point, we notice that both are doubles of spherical quadrilaterals, whose boundaries are marked in grey. These spherical quadrilaterals are shown more clearly in the final row.}\label{fig:twoSigGraphs}}
		{\includegraphics[width=0.47\textwidth]{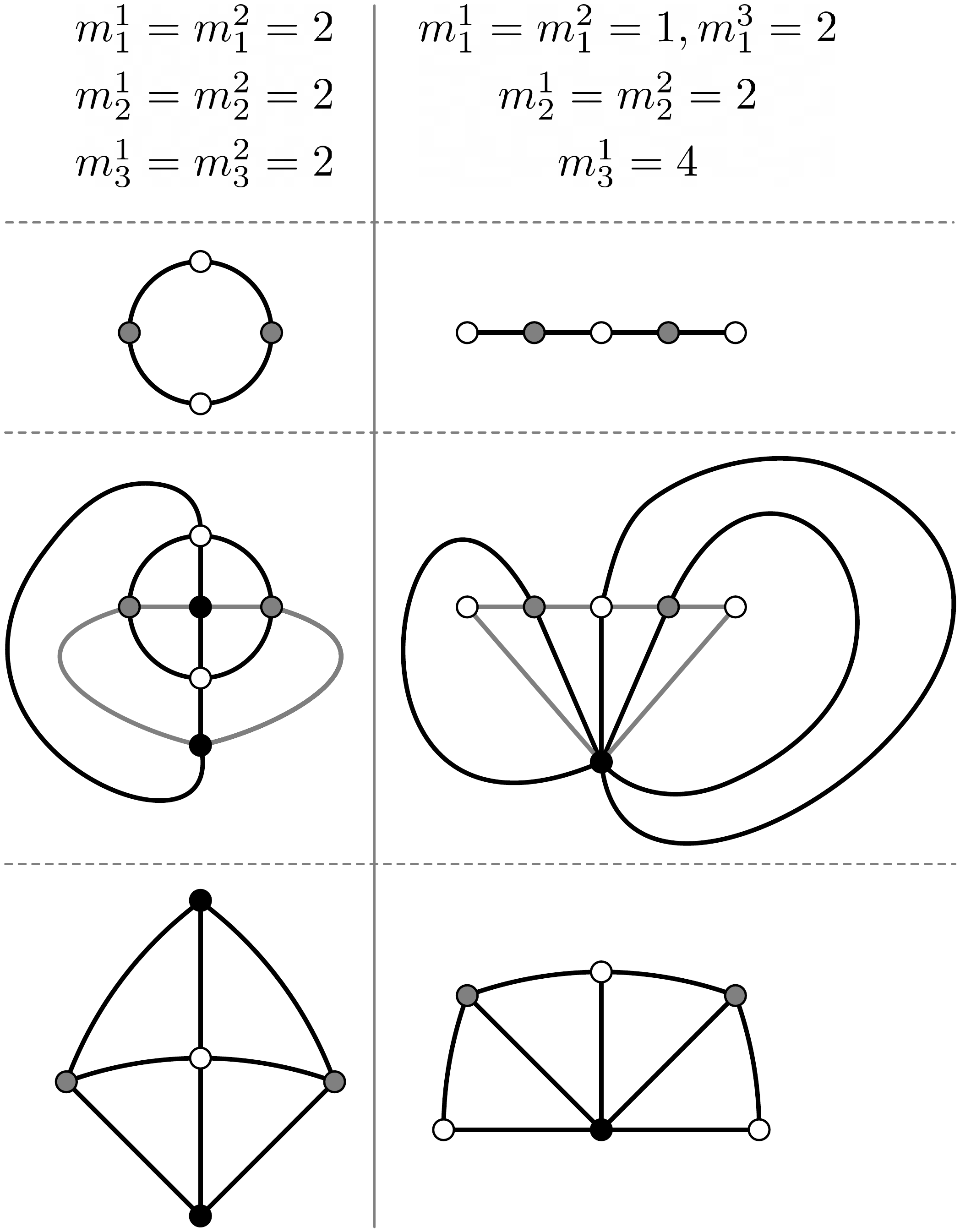}}
	\end{figure}

	We now give \cref{tab:intVerts}, which lists the links of every possible singular vertex in a nonnegatively curved integral polyhedral 3-manifold. Each row in the table corresponds to the link of a unique vertex. The tuple \(\bm\vartheta\) is defined so that \(2\pi\bm\vartheta\) is the list of conical angles of the link in increasing order. The third column says whether or not the link is the double of a spherical polygon. The fourth column gives the local monodromy group. The purpose of the final column is to give a complete geometric description of the link where necessary. If \(\bm\vartheta=(\alpha,\beta,\gamma)\), then this final column is left blank, as the link is completely determined as the double of the unique spherical triangle with angles \(\pi\alpha\), \(\pi\beta\), and \(\pi\gamma\). Otherwise, if the link is the double of a spherical quadrilateral, then that quadrilateral is shown in the final column, built out of copies of the spherical triangle with angles \(\pi/4\), \(\pi/3\), and \(\pi/2\) (denoted by black, grey, and white vertices respectively). If the link is not a double, then a full triangulation is given, with grey arrows denoting edge identifications. The table is ordered increasingly, first by number of conical points, then by size of first conical angle (then second, third, etc.). The only two links with the same conical angles are links \#23 and \#24, which are ordered by size of monodromy group.

	\begin{longtable}[c]{@{}r@{:~}lccc@{}}
	\caption{Links of nonnegatively curved integral vertices\label{tab:intVerts}}\\
	\toprule
	\textbf{\#} & \(\bm\vartheta\)                                                             & \textbf{Double?} & \textbf{Monodromy} & \textbf{Description} \\ \bottomrule \endfirsthead
	\caption[]{(Continued)}\\
	\toprule
	\textbf{\#} & \(\bm\vartheta\)                                                             & \textbf{Double?} & \textbf{Monodromy} & \textbf{Description} \\ \bottomrule \endhead
	1           & \(\left(\frac{1}{6},\frac{1}{2},\frac{1}{2}\right)\)                         & Yes              & \(D_6\)            &                      \\ \midrule
	2           & \(\left(\frac{1}{4},\frac{1}{4},\frac{2}{3}\right)\)                         & Yes              & \(S_4\)            &                      \\ \midrule
	3           & \(\left(\frac{1}{4},\frac{1}{3},\frac{1}{2}\right)\)                         & Yes              & \(S_4\)            &                      \\ \midrule
	4           & \(\left(\frac{1}{4},\frac{1}{3},\frac{3}{4}\right)\)                         & Yes              & \(S_4\)            &                      \\ \midrule
	5           & \(\left(\frac{1}{4},\frac{1}{2},\frac{1}{2}\right)\)                         & Yes              & \(D_4\)            &                      \\ \midrule
	6           & \(\left(\frac{1}{4},\frac{1}{2},\frac{2}{3}\right)\)                         & Yes              & \(S_4\)            &                      \\ \midrule
	7           & \(\left(\frac{1}{3},\frac{1}{3},\frac{1}{2}\right)\)                         & Yes              & \(A_4\)            &                      \\ \midrule
	8           & \(\left(\frac{1}{3},\frac{1}{3},\frac{2}{3}\right)\)                         & Yes              & \(A_4\)            &                      \\ \midrule
	9           & \(\left(\frac{1}{3},\frac{1}{2},\frac{1}{2}\right)\)                         & Yes              & \(D_3\)            &                      \\ \midrule
	10          & \(\left(\frac{1}{3},\frac{1}{2},\frac{2}{3}\right)\)                         & Yes              & \(A_4\)            &                      \\ \midrule
	11          & \(\left(\frac{1}{3},\frac{1}{2},\frac{3}{4}\right)\)                         & Yes              & \(S_4\)            &                      \\ \midrule
	12          & \(\left(\frac{1}{2},\frac{1}{2},\frac{1}{2}\right)\)                         & Yes              & \(D_2\)            &                      \\ \midrule
	13          & \(\left(\frac{1}{2},\frac{1}{2},\frac{2}{3}\right)\)                         & Yes              & \(D_3\)            &                      \\ \midrule
	14          & \(\left(\frac{1}{2},\frac{1}{2},\frac{3}{4}\right)\)                         & Yes              & \(D_4\)            &                      \\ \midrule
	15          & \(\left(\frac{1}{2},\frac{1}{2},\frac{5}{6}\right)\)                         & Yes              & \(D_6\)            &                      \\ \midrule
	16          & \(\left(\frac{1}{2},\frac{2}{3},\frac{2}{3}\right)\)                         & Yes              & \(A_4\)            &                      \\ \midrule
	17          & \(\left(\frac{1}{2},\frac{2}{3},\frac{3}{4}\right)\)                         & Yes              & \(S_4\)            &                      \\ \midrule
	18          & \(\left(\frac{2}{3},\frac{2}{3},\frac{2}{3}\right)\)                         & Yes              & \(A_4\)            &                      \\ \midrule
	19          & \(\left(\frac{2}{3},\frac{3}{4},\frac{3}{4}\right)\)                         & Yes              & \(S_4\)            &                      \\ \midrule
	20          & \(\left(\frac{1}{3},\frac{1}{2},\frac{2}{3},\frac{3}{4}\right)\)             & Yes              & \(S_4\)            & \includegraphics[scale=0.2,valign=t]{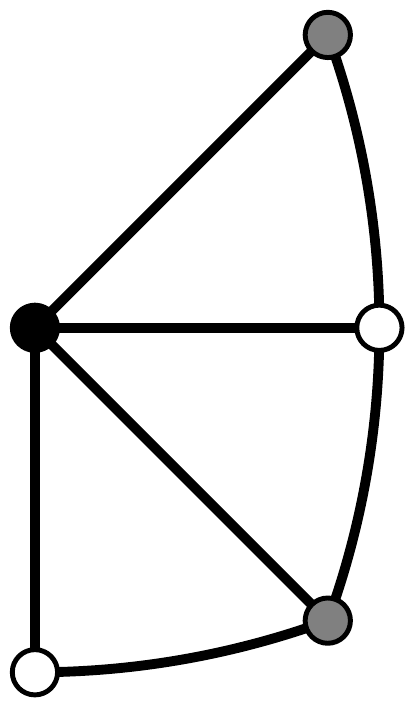}                  \\ \midrule
	21          & \(\left(\frac{1}{2},\frac{1}{2},\frac{1}{2},\frac{2}{3}\right)\)             & Yes              & \(S_4\)            & \includegraphics[scale=0.2,valign=t]{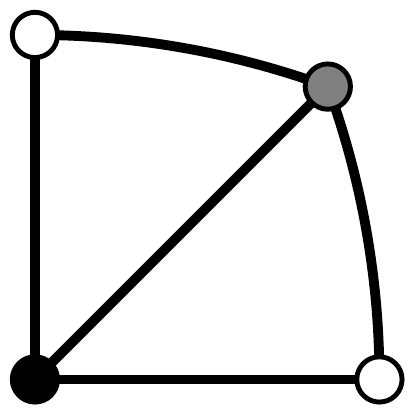}                  \\ \midrule
	22          & \(\left(\frac{1}{2},\frac{1}{2},\frac{1}{2},\frac{3}{4}\right)\)             & No               & \(S_4\)            & \includegraphics[scale=0.2,valign=t]{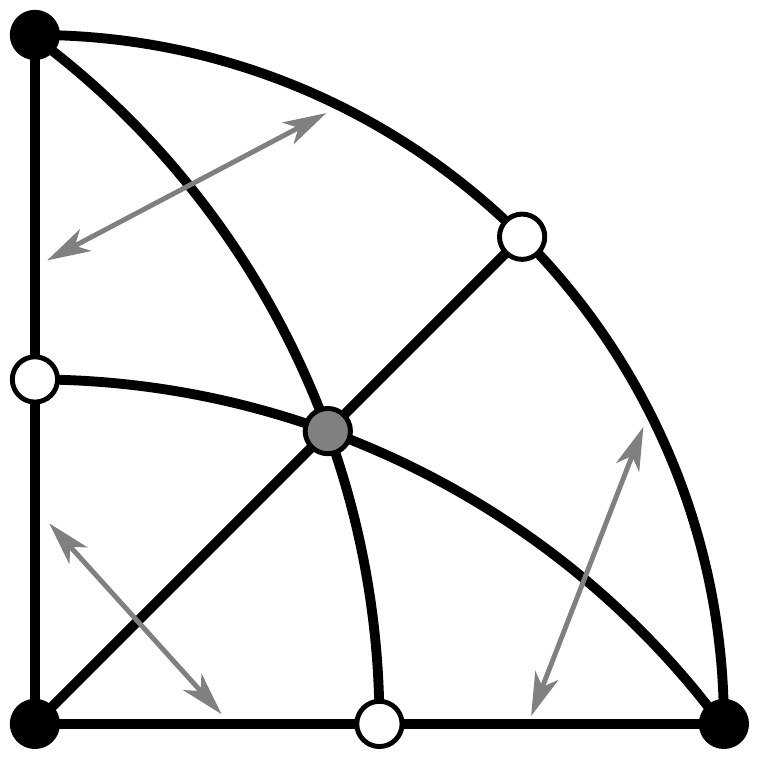}                  \\ \midrule
	23          & \(\left(\frac{1}{2},\frac{1}{2},\frac{2}{3},\frac{2}{3}\right)\)             & Yes              & \(A_4\)            & \includegraphics[scale=0.2,valign=t]{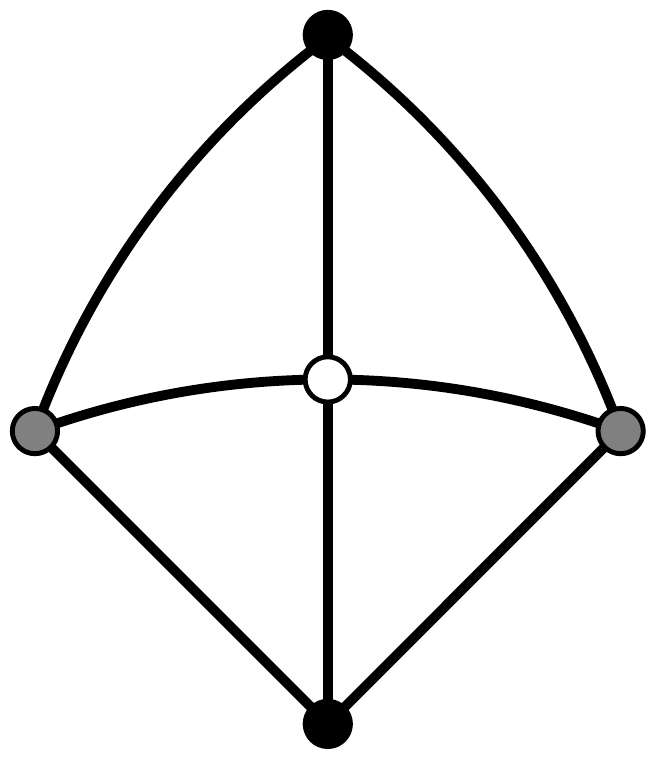}                  \\ \midrule
	24          & \(\left(\frac{1}{2},\frac{1}{2},\frac{2}{3},\frac{2}{3}\right)\)             & Yes              & \(S_4\)            & \includegraphics[scale=0.2,valign=t]{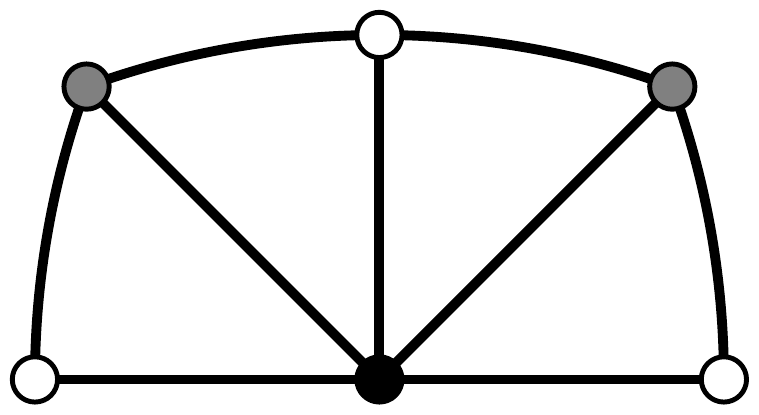}                  \\ \midrule
	25          & \(\left(\frac{1}{2},\frac{1}{2},\frac{2}{3},\frac{3}{4}\right)\)             & Yes              & \(S_4\)            & \includegraphics[scale=0.2,valign=t]{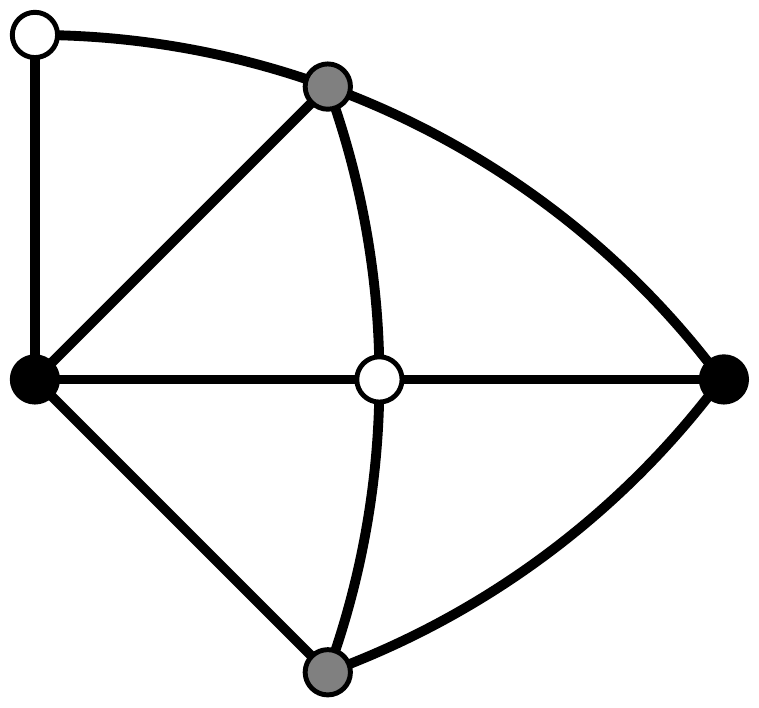}                  \\ \midrule
	26          & \(\left(\frac{1}{2},\frac{1}{2},\frac{3}{4},\frac{3}{4}\right)\)             & Yes              & \(S_4\)            & \includegraphics[scale=0.2,valign=t]{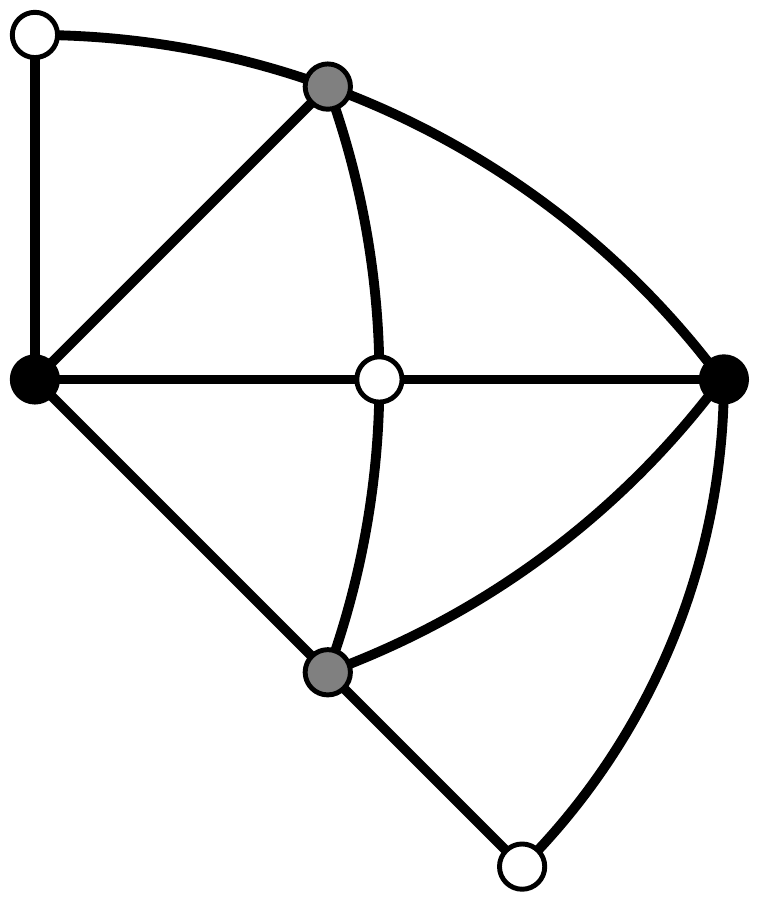}                  \\ \midrule
	27          & \(\left(\frac{1}{2},\frac{2}{3},\frac{2}{3},\frac{3}{4}\right)\)             & No               & \(S_4\)            & \includegraphics[scale=0.2,valign=t]{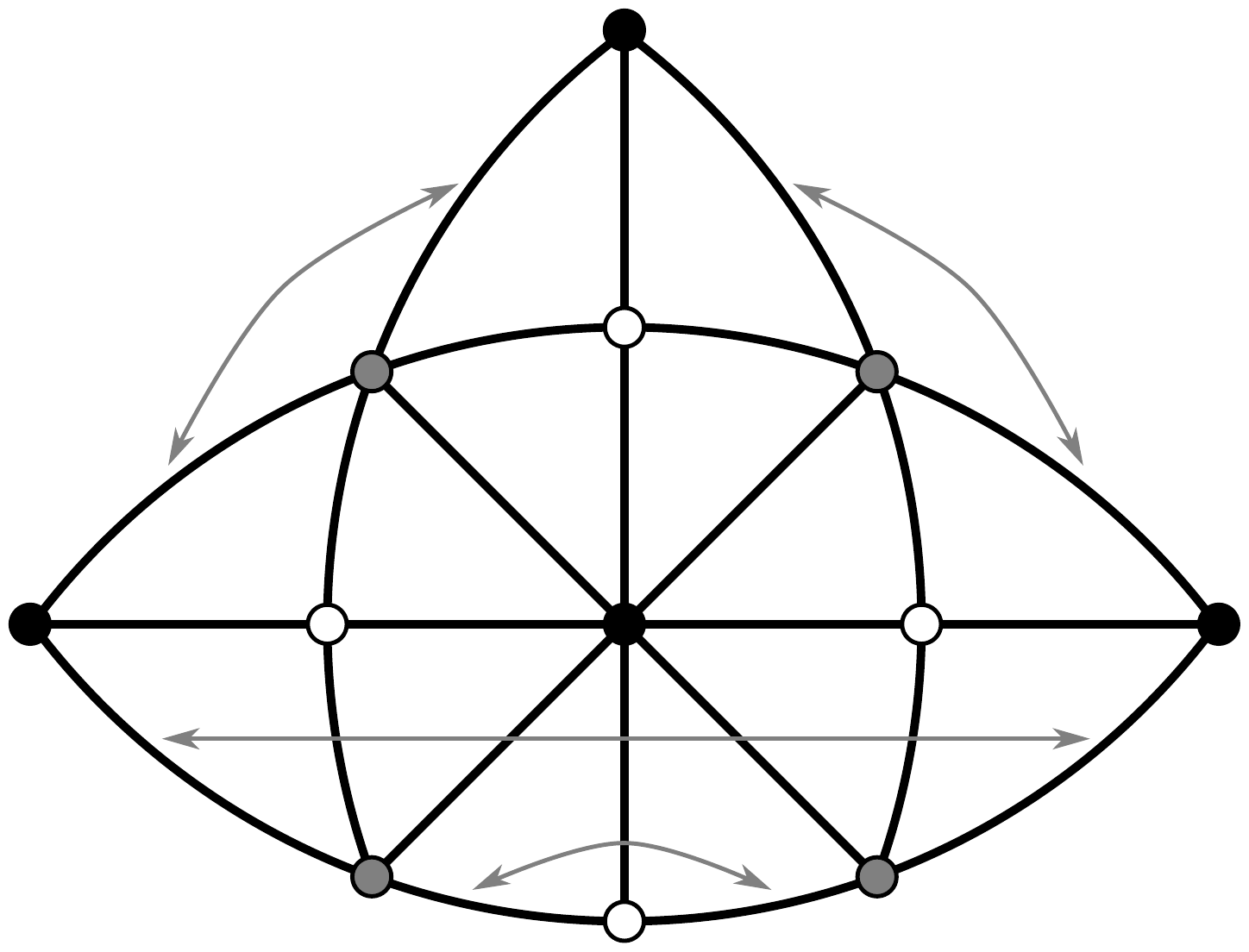}                  \\ \midrule
	28          & \(\left(\frac{1}{2},\frac{2}{3},\frac{3}{4},\frac{3}{4}\right)\)             & Yes              & \(S_4\)            & \includegraphics[scale=0.2,valign=t]{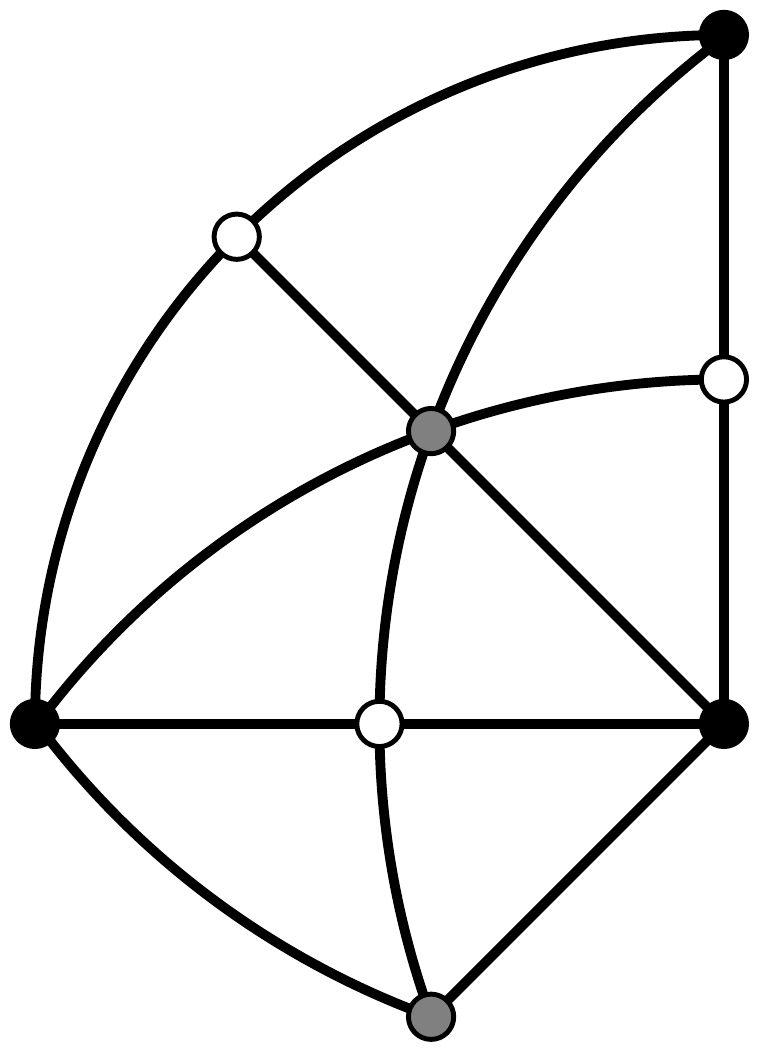}                  \\ \midrule
	29          & \(\left(\frac{2}{3},\frac{2}{3},\frac{2}{3},\frac{2}{3}\right)\)             & Yes              & \(A_4\)            & \includegraphics[scale=0.2,valign=t]{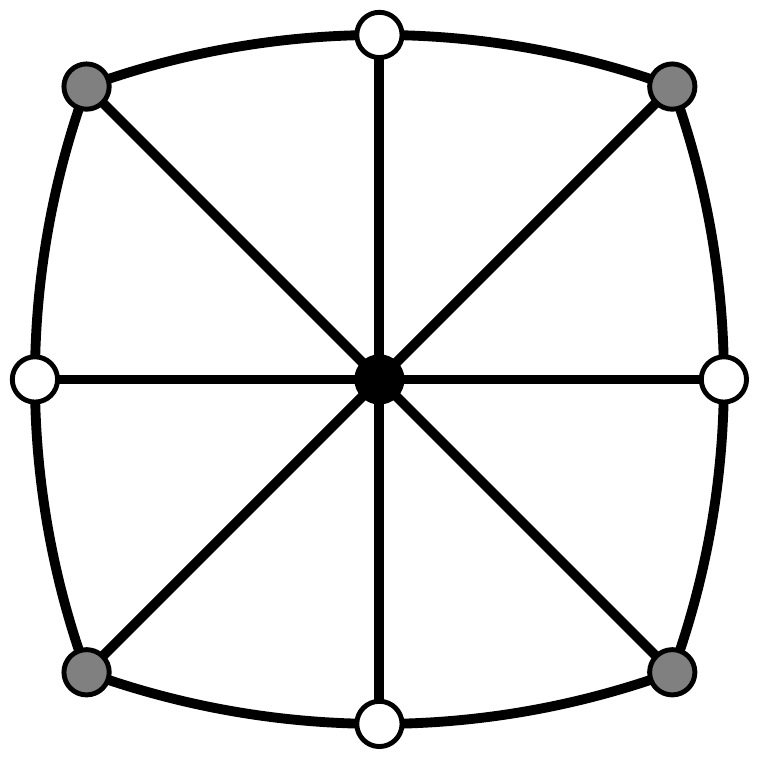}                  \\ \midrule
	30          & \(\left(\frac{2}{3},\frac{2}{3},\frac{3}{4},\frac{3}{4}\right)\)             & Yes              & \(S_4\)            & \includegraphics[scale=0.2,valign=t]{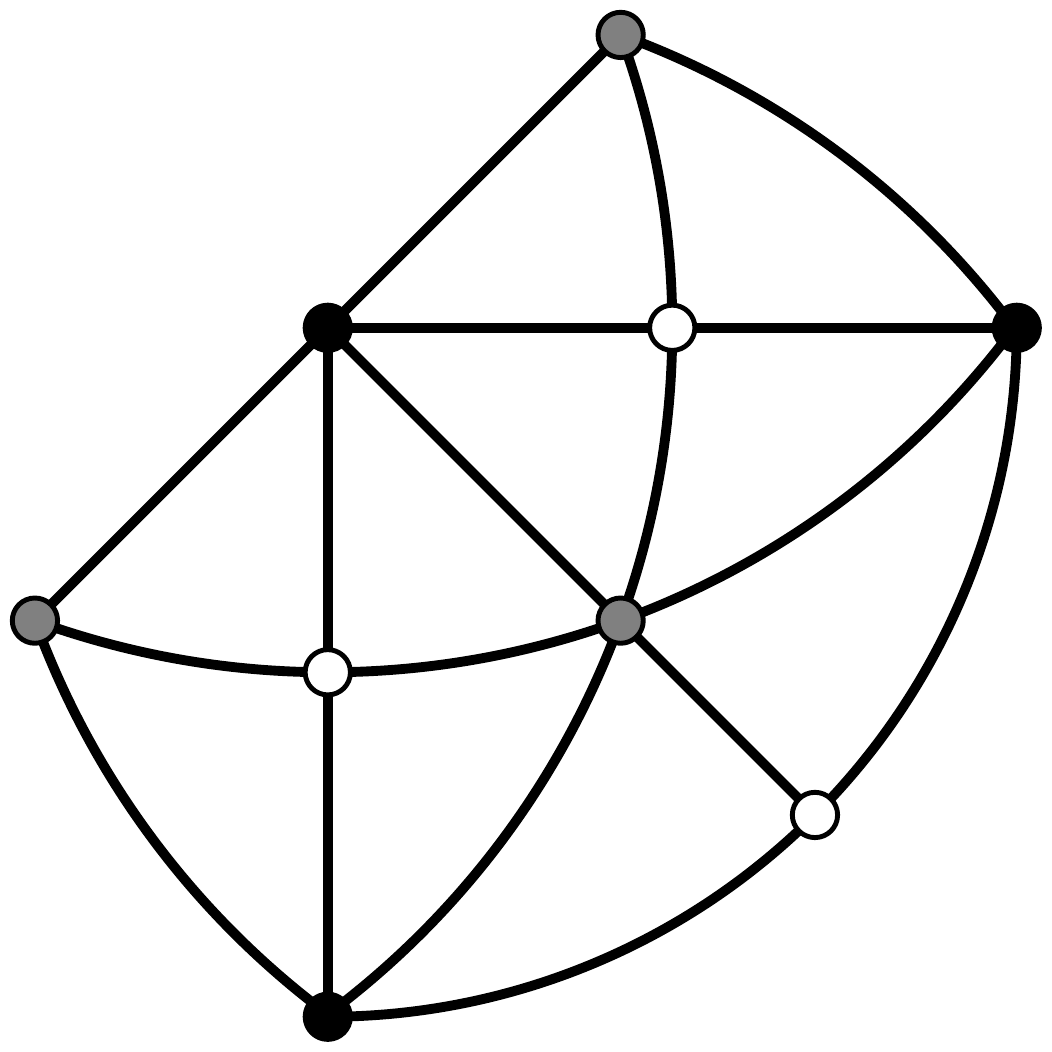}                  \\ \midrule
	31          & \(\left(\frac{3}{4},\frac{3}{4},\frac{3}{4},\frac{3}{4}\right)\)             & No               & \(S_4\)            & \includegraphics[scale=0.2,valign=t]{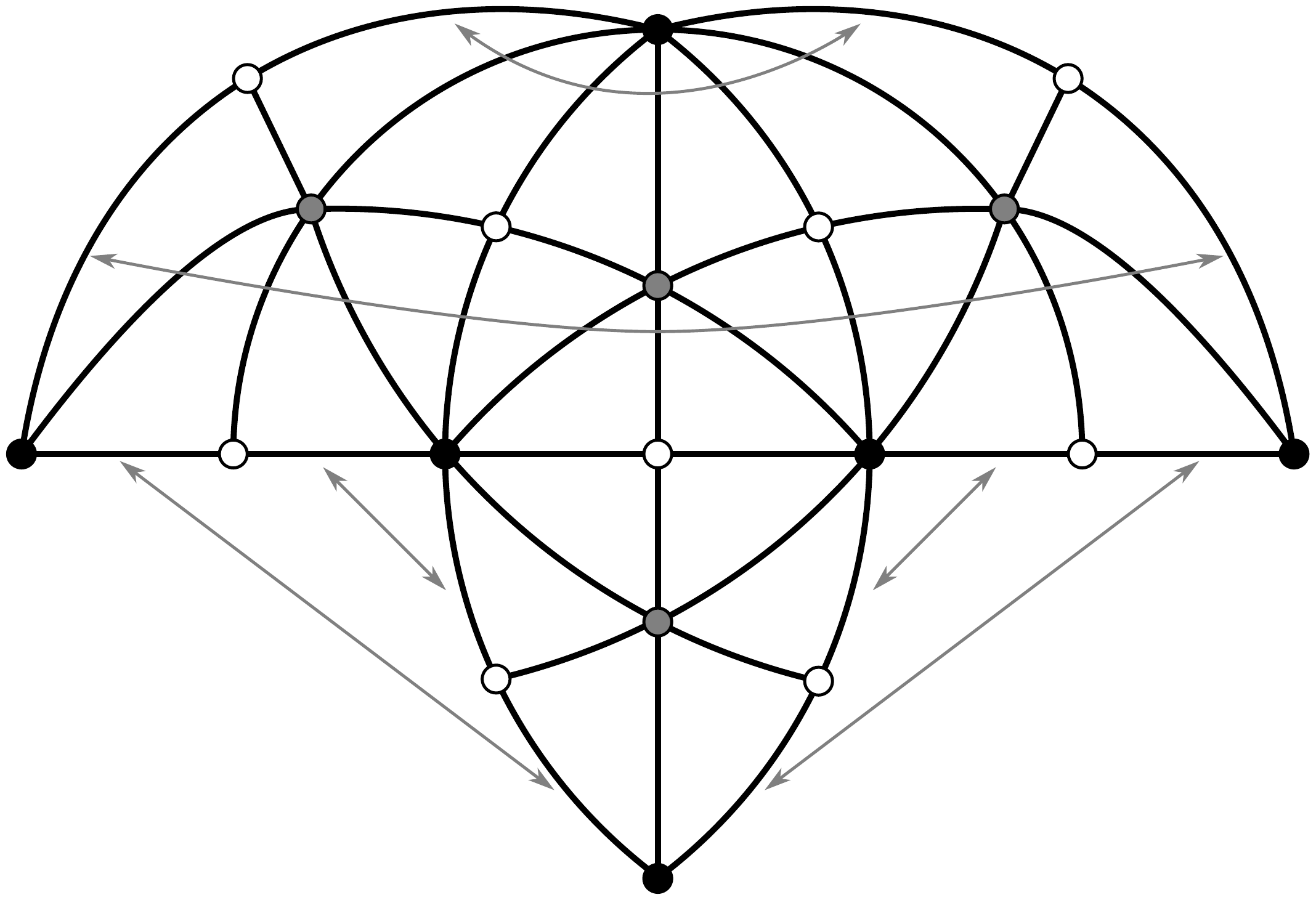}                  \\ \midrule
	32          & \(\left(\frac{2}{3},\frac{2}{3},\frac{2}{3},\frac{3}{4},\frac{3}{4}\right)\) & No               & \(S_4\)            & \includegraphics[scale=0.2,valign=t]{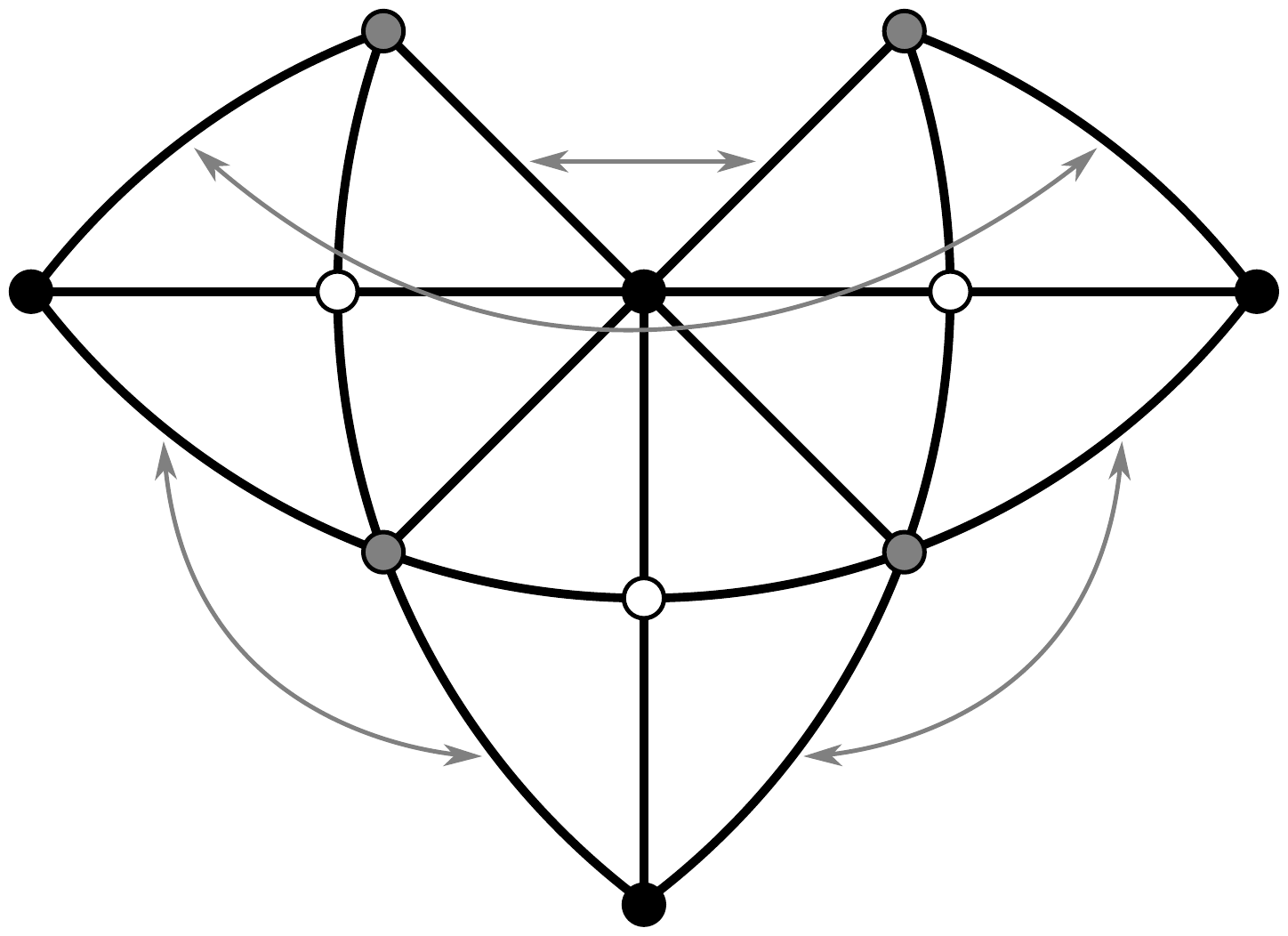}               \\ \bottomrule
	\end{longtable}


\bibliographystyle{Tamsalpha}
\bibliography{main}

\end{document}